\documentclass{amsart}
\usepackage{amsmath}
\usepackage{amssymb}
\usepackage{amsfonts}
\usepackage{mathrsfs}
\usepackage{graphics}
\usepackage{graphicx}

\title[Time-changes of horocycle flows]%
{Time-Changes of Horocycle Flows}

\author{Giovanni Forni} \author{Corinna Ulcigrai}

\address{Department  of Mathematics\\
  University of Maryland \\
  College Park, MD USA}
  
\address{School of Mathematics \\ University of Bristol \\ Bristol, UK}
  
  \begin{document}

\def\Leb{{\mathrm{Leb}}}
\def\PP{{\mathcal{P}}}
\def\RR{{{\mathcal{R}}}}
\def\R{{\mathbb{R}}}
\def\Q{{\mathbb{Q}}}
\def\C{{\mathbb{C}}}
\def\Z{{\mathbb{Z}}}
\def\N{{\mathbb{N}}}
\def\T{{\mathbb{T}}}
\def\Re{{\operatorname{Re}}}
\def\Im{{ \operatorname{Im} }}

\def\be{\begin{equation}}
\def\ee{\end{equation}}
\def\bes{\begin{equation*}}
\def\ees{\end{equation*}}
\newcommand{\Sym}[1]{\mathcal{S} _{ #1} }
\newcommand{\ud}{\, \mathrm{d}}
\def\st{{| \ }}

\newenvironment{proofof}[2]{\begin{proof}[Proof of #1 \ref{#2}.]}{\end{proof}}
\newtheorem{thm}{Theorem}
\newtheorem{lemma}{Lemma}
\newtheorem{cor}{Corollary}
\newtheorem{defn}{Definition}
\newtheorem{prop}{Proposition}
{\theoremstyle{remark} \newtheorem{rem}[thm]{Remark}}

\begin{abstract}
We consider smooth time-changes of the classical horocycle flows on the unit tangent bundle of a
compact hyperbolic surface and prove sharp bounds on the rate of equidistribution and the rate of mixing. We then derive results on the spectrum of smooth time-changes and show that the spectrum is absolutely continuous with respect to the Lebesgue measure on the real line and that the maximal spectral type is equivalent to Lebesgue. 

\end{abstract}  
  
  \maketitle
  
\section{Introduction}
The classical horocycle flow is a fundamental example of a unipotent, parabolic (non-hyperbolic)
flow. Its dynamical properties have been studied in great detail. It is known that the flow is
minimal \cite{He}, uniquely ergodic~\cite{Fu}, has Lebesgue spectrum and is therefore strongly mixing~\cite{Pa}, in fact mixing of all orders \cite{Ma}, and has zero entropy~\cite{Gu}. 
Its finer ergodic and rigidity properties, as well as the rate of mixing, were investigated by M.~Ratner
is a series of papers \cite{Ra1}, \cite{Ra2}, \cite{Ra3}, \cite{Ra4} (for results on the rate of mixing of the geodesic as well as horocycle flows see also the paper by C. Moore \cite{Mo}). In  joint work with L.~Flaminio \cite{FlaFo}, the first author has proved precise bounds on ergodic integrals of smooth 
functions. In the case of finite-volume, non-compact surfaces, the horocyle flow is not uniquely ergodic
and the classification of invariant measures is due to Dani \cite{dani}. The asymptotic behaviour of averages along closed horocycles has been studied  by D.~Zagier~\cite{Za}, P.~Sarnak~\cite{Sa}, D.~Hejal \cite{Hj} and more recently in \cite{FlaFo} and by A.~Str{\"o}mbergsson \cite{St}. Horocycle flows on general geometrically finite surfaces have been studied by M.~Burger~\cite{Bur}.  

Not much is known for general smooth parabolic flows, not even for smooth perturbations
of classical horocycle flows in the compact case. In fact, even the dynamics of non-trivial 
smooth time-changes is poorly understood. Our paper addresses the latter question. By the
classification of horocycle invariant distributions \cite{FlaFo} and by the related results on
asymptotic of ergodic averages   for classical horocycle flows (see \cite{FlaFo}, \cite{BuFo}), it is 
known that smooth time-changes which are measurably trivial form a subspace of countable
codimension, so that the generic smooth time-change is not even measurably conjugate
to the horocycle flow.  It is therefore interesting to know, perhaps as a step towards a better 
understading of parabolic dynamical systems, to what extent the  dynamical properties of the 
horocycle flow persist after a smooth time-change. The most important result to date is the proof by B.~Marcus more than thirty years ago that all time-changes satisfying a mild differentiability conditions are mixing \cite{Ma}. Marcus results generalized earlier work by Kushnirenko who proved mixing for all time-changes with sufficiently small derivative in the geodesic direction \cite{Ku}. 

A.~Katok and  J.-P.~Thouvenot have conjectured that  ``Any flow obtained by a sufficiently smooth time change from a horocycle flow has countable Lebesgue spectrum'' (see \cite{KT}, Conjecture 6.8). 
In fact, the question on the spectral type of smooth time changes of horocycle flows was already
asked in Kushnirenko's paper \cite{Ku}. There the author is able to prove the relative absolute continuity of 
the spectrum of (restricted) smooth perturbations of skew-shifts, but cannot extend his results to 
time-changes of horocyle flows.
In our paper we prove sharp bounds on the rate of equidistribution and mixing
of smooth time-changes of the classical horocycle flow on the unit tangent bundle of a
compact hyperbolic surface (see Theorem \ref{thm:quant_equidist} and Theorem \ref{thm:mixingbound2} 
in Sections \ref{sec:equidistribution} and \ref{sec:mixing} respectively). We then derive results on the spectrum of smooth time-changes (in Section \ref{sec:spectrum}),
most notably we prove that 
the spectrum is absolutely continuous with respect to the Lebesgue measure (Theorem \ref{thm:abs_cont_spectrum}, in Section \ref{subsec:ac}). 
We finally prove that the maximal spectral type is indeed equivalent to Lebesgue (Theorem \ref{thm:Max_spec_type}, in Section \ref{subsec:spectral_type}).

The guiding idea of our work is that Marcus' mixing mechanism can be made quantitative
by the more recent quantitative results on the rate of equidistribution for horocycle flows
(see \cite{Bur}, \cite{FlaFo}, \cite{BuFo}). In fact, Marcus argument is based on the equidistribution
of long {\it horocycle-like arcs}, that is, arcs which are long in the horocycle direction and bounded
in the complementary directions. Sharp results on the rate of equidistributions of horocycle-like arcs
were recently obtained in \cite{BuFo} as a refinement of earlier results for horocycle arcs
\cite{Bur}, \cite{FlaFo}.  Finally, our estimates on the rate of mixing for time-changes
would be far from optimal, and definitely too weak to derive any significant spectral result,
without a key {\it bootstrap trick}. Thanks to this bootstrap trick we can prove that 
decay of correlations of the horocycle flows are indeed stable under any smooth 
time-change. 

Spectral results are derived from square-mean bounds on twisted ergodic integrals
of smooth functions which are equivalent to bounds on the Fourier transform of the
spectral measures. A well-known difficulty in this approach is that the decay of correlations
of a general smooth function under the horocycle flow is not square-integrable, so that
it would seem hopeless to prove absolute continuity of the spectrum in this way. 
However, our results on decay of correlations of time-changes are precise enough,
thanks to the bootstrap trick, to give optimal, and hence square-integrable, decay of 
correlations for smooth coboundaries. Once it is established that all smooth coboundaries
have absolutely continuous spectral measures, it  follows (for instance by a density argument) 
that the spectrum is purely absolutely continuous. Our estimates on decay of correlations
of coboundaries are also crucial in the proof that the maximal spectral type is Lebesgue.

Let us remark that while in this paper we only deal with horocycle flows for compact hyperbolic surfaces,  most 
of the methods and results can presumably be extended to the non-compact, finite volume case with appropriate modifications.

\subsubsection*{Structure of the paper} In Section \ref{sec:time_changes} we introduce basic definitions, notation and properties of smooth time-canges of the classical horocycle flow. In Section \ref{sec:equidistribution} we recall the results on the invariant distributions for the classical horocycle flow from \cite{FlaFo} and, from the  results on the asymptotics of ergodic integrals in \cite{FlaFo,BuFo}, we derive analogous results for smooth-time changes (Theorem \ref{thm:quant_equidist}). These quantitative equidistribution results are used in  Section \ref{sec:mixing} together with a key bootstrap trick to make quantitative Marcus'mixing argument for smooth-time changes and derive the quantitative mixing result  in Theorem \ref{thm:mixingbound2}. In Section \ref{sec:L2estimates} we prove mean-square bounds on twisted ergodic integrals of smooth 
functions. Finally, in Section \ref{sec:spectrum} we prove our main spectral results. We first prove a local estimate (Theorem \ref{thm:spectral_measures} in \S~\ref{subsec:local}), then absolute continuity if the spectrum (Theorem \ref{thm:abs_cont_spectrum} in \S~\ref{subsec:ac}) and finally that the maximal spectral type is Lebesgue (Theorem \ref{thm:Max_spec_type} in \S~\ref{subsec:spectral_type}).

\section{Time-changes of horocycle flows}\label{sec:time_changes}
Let $\{U,V, X\}$ the basis of the Lie algebra $\mathfrak {sl}(2,\R)$ of $PSL(2,\R)$ given
by the generators $U$  and $V$ of the stable and unstable horocycle flows and by the generator 
$X$ of the geodesic flow, respectively. The following commutation relations 
hold:
\begin{equation}
\label{eq:commrel}
[U,V] =2X \,, \quad [X,U]= U \,,  \quad  [X,V] = -V  \,.
\end{equation}
Let  $\{h^U_t\}$ and $\{h^V_t\}$ denote respectively the stable and unstable horocyle flows and let
$\{\phi^X_t\}$ denote the geodesic flow on a compact homogeneous space $M:= \Gamma \backslash PSL(2,\R)$. They are defined respectively by the multiplicative action on the right of the $1$-parameter 
subgroups of the group $PSL(2,\R)$ listed below:
\begin{equation}
\label{eq:homflows}
  \{ \exp (tU) \}_{t\in\R}  \,, \quad  \{ \exp (tV) \}_{t\in\R} \,, \quad  \{ \exp (tX) \}_{t\in\R}  \,.
\end{equation}
A smooth time-change of the (stable) horocycle flow is a flow $\{h^\alpha_t\}$ on $M$ defined as follows.
Let $\tau:M\times \R \to \R$  be a smooth cocycle over the flow $\{h^U_t\}$, that is, a
function with the property that
$$
\tau(x, t+ t') = \tau(x,t) + \tau( h^U_t(x), t') \,, \quad \text{ for all }  (x,t,t') \in M\times \R^2\,.
$$
We denote by $\alpha:M \to \R^+$ the infinitesimal generator of the cocycle $\tau: M\times \R \to \R$,
that is, the function defined as follows:
$$
\alpha (x) := \frac{\partial\tau}{ \partial t}(x,t) \vert _{t=0} \,, \quad \text{ for all } x\in M\,.
$$
The time-change $\{h^\alpha_t\}$  is the flow on $M$ generated by the smooth vector
field 
\begin{equation}
U_\alpha =: U/\alpha \,.
\end{equation}
One can check that $\{h^\alpha_t\}$  is given by the formula
\begin{equation}
h^\alpha_{\tau(x,t)}(x) := h^U_t(x)\,, \quad  \text{ for all }  (x,t) \in M\times \R.
\end{equation}
The flow $\{h^\alpha_t\}$ preserves the (smooth) volume form $\text{\rm vol}_\alpha:= \alpha \text{\rm vol}$;
in fact 
$$
\mathcal L_{U_\alpha} \text{\rm vol}_\alpha= \imath_{U_\alpha} \text{\rm vol}_\alpha =
 \imath_U  \text{\rm vol} = \mathcal L_U \text{\rm vol}=0\,.
$$
We will assume below that the function $\alpha:M\to \R$ is everywhere strictly positive
and is normalized so that
\begin{equation}
\label{eq:normalization}
\int_M \text{\rm vol}_\alpha = \int_M \alpha \,\text{\rm vol} =1 \,.
\end{equation}

The time-change $\{h^\alpha_t\}$ is parabolic, in fact the infinitesimal divergence of trajectories
is at most quadratic with respect to time (as is the case for the standard horocycle flow).
The tangent flow $\{Dh^\alpha_t\}$ on $TM$ is described as follows.

\begin{lemma} 
\label{lemma:tangent_flow}
The tangent flow $\{Dh^\alpha_t\}$ on $TM$ is given by the following formulas:
\begin{equation}
\label{eq:tangent_flow}
\begin{aligned}
Dh^\alpha_t (V) &= [ \int_0^t  \left( (\int_0^\tau \frac{1}{\alpha} \circ h^\alpha_u \,du)  (\frac{X\alpha}{\alpha} -1)\circ h^\alpha_\tau +  \frac{V\alpha}{\alpha} \circ h^\alpha_\tau\right) \,d\tau]\, U_\alpha \circ h^\alpha_t  \\ 
&+  V\circ h^\alpha_t  +  [\int_0^t \frac{1}{\alpha} \circ h^\alpha_\tau \,d\tau]  \,X\circ h^\alpha_t \,; \\
Dh^\alpha_t (X) &= [\int_0^t (\frac{X\alpha}{\alpha} -1)\circ h^\alpha_\tau \,d\tau] \,U_\alpha\circ h^\alpha_t  + X\circ h^\alpha_t \,; \\
\end{aligned}
\end{equation}
\end{lemma}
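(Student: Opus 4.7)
My plan is to decompose $Dh^\alpha_t(W)$ in the global smooth frame $\{U_\alpha, V, X\}$ (which exists because $\alpha > 0$ everywhere) and reduce the identity to a triangular linear ODE along the orbit $\tau \mapsto h^\alpha_\tau(x)$. Writing
\begin{equation*}
Dh^\alpha_t(W)\vert_{x} = A_W(t)\, U_\alpha\vert_{h^\alpha_t(x)} + B_W(t)\, V\vert_{h^\alpha_t(x)} + C_W(t)\, X\vert_{h^\alpha_t(x)}
\end{equation*}
for $W \in \{V, X\}$, the initial conditions are $(A_W, B_W, C_W)(0) = (0,1,0)$ for $W = V$ and $(0,0,1)$ for $W = X$; the goal is to determine these six scalar functions.

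To extract the ODE, I start from the identity $Dh^\alpha_t(W)(f)\vert_{x} = W(f \circ h^\alpha_t)\vert_{x}$, valid for every smooth $f$ on $M$. Differentiating in $t$ amounts to replacing $f$ by $U_\alpha f$ on the right-hand side, so $\tfrac{d}{dt} Dh^\alpha_t(W)(f) = Dh^\alpha_t(W)(U_\alpha f)$. Substituting the frame expansion into both sides and cancelling the common $A_W \cdot U_\alpha(U_\alpha f)$ contribution gives
\begin{equation*}
A_W'(t)\, U_\alpha f + B_W'(t)\, V f + C_W'(t)\, X f = B_W(t)\, [V, U_\alpha] f + C_W(t)\, [X, U_\alpha] f
\end{equation*}
at $h^\alpha_t(x)$. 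The two brackets on the right are then computed from $U_\alpha = \alpha^{-1} U$ via the Leibniz rule $[Z, \alpha^{-1} U] = Z(\alpha^{-1}) U + \alpha^{-1}[Z, U]$ and the commutation relations \eqref{eq:commrel}. Both $[V, U_\alpha]$ and $[X, U_\alpha]$ turn out to be combinations of $U_\alpha$ and $X$ only, with coefficients built from $V\alpha/\alpha$, $X\alpha/\alpha$, and $1/\alpha$; in particular the $V$-component of the right-hand side vanishes identically.

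Matching components then produces a triangular ODE in the order $B_W, C_W, A_W$. First $B_W'(t) = 0$, so $B_W \equiv B_W(0)$. Next $C_W'(t)$ is a multiple of $B_W/\alpha$ along the orbit, giving $C_W(t)$ as a single integral of $1/\alpha \circ h^\alpha_\tau$. Finally $A_W'(t)$ is a combination of $B_W \cdot V\alpha/\alpha$ and $C_W \cdot (X\alpha/\alpha - 1)$, so a further integration produces a simple integral of $V\alpha/\alpha \circ h^\alpha_\tau$ plus, through the already-integrated $C_W$, the nested integral $\int_0^t \bigl(\int_0^\tau \tfrac{1}{\alpha}\circ h^\alpha_u\,du\bigr)\bigl(\tfrac{X\alpha}{\alpha} - 1\bigr)\circ h^\alpha_\tau \,d\tau$ appearing in the statement. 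For $W = X$ the formula is shorter because $B_X \equiv 0$ forces $C_X \equiv 1$, so the iterated structure collapses and only a single integration remains. The only real work is tracking the signs coming from the commutators and substituting $C_W(\tau)$ correctly into the integrand of $A_W'(\tau)$; no conceptual obstacle arises.
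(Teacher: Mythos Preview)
Your approach is essentially identical to the paper's: both decompose the pushforward in the frame $\{U_\alpha, V, X\}$, derive the same triangular ODE system $B_W'=0$, $C_W'=B_W/\alpha$, $A_W'=C_W(\tfrac{X\alpha}{\alpha}-1)+B_W\tfrac{V\alpha}{\alpha}$ from the brackets $[U_\alpha,X]$ and $[U_\alpha,V]$, and integrate. The only cosmetic difference is that you pass through the action on test functions to obtain the evolution equation, whereas the paper writes $\tfrac{d}{dt}(h^\alpha_t)_\ast(W)$ directly; the resulting computation is the same.
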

\begin{proof} For any vector field $W$ on $M$, let us write
$$
(h^\alpha_t)_\ast (W ) = a_t U_\alpha + b_t V  +c_t X\,.
$$
By the commutation relation
$$
[U_\alpha, X] =  (\frac{X\alpha}{\alpha} -1) U_\alpha\, \quad \text{ and } \quad  [U_\alpha, V] = 
 X/\alpha +  \frac{V\alpha}{\alpha} U_\alpha\,,
$$
hence 
$$
\frac{d}{dt} (h^\alpha_t)_\ast (W) =  [ c_t (\frac{X\alpha}{\alpha} -1) \circ h^\alpha_t 
+ b_t  \frac{V\alpha}{\alpha}\circ h^\alpha_t  ] U_\alpha + (\frac{b_t}{\alpha}\circ h^\alpha_t )  X \,.
$$
It follows that the function  $(a_t, b_t ,c_t)$ satisfies the following system of O.D.E.'s:
$$
\begin{cases} 
 \frac{da_t}{dt} &= c_t (\frac{X\alpha}{\alpha} -1) \circ h^\alpha_t 
+ b_t  \frac{V\alpha}{\alpha}\circ h^\alpha_t \,; \\ 
 \frac{db_t}{dt}&=0  \,; \\
 \frac{dc_t}{dt}&= \frac{b_t}{\alpha}\circ h^\alpha_t \,.
 \end{cases}
$$
If $W=V$,  the initial condition is $(a_0, b_0, c_0) = (0,1,0)$, hence the unique solution of the Cauchy
problem is given by the functions:
$$
\begin{aligned}
&a_t =  \int_0^t  \left( (\int_0^\tau \frac{1}{\alpha} \circ h^\alpha_u \,du)  (\frac{X\alpha}{\alpha} -1)\circ h^\alpha_\tau 
+  \frac{V\alpha}{\alpha} \circ h^\alpha_\tau\right) \,d\tau \,,  \\  &b_t \equiv 1 \,, \quad c_t= \int_0^t \frac{1}{\alpha} \circ h^\alpha_\tau \,d\tau\,.
\end{aligned}
$$
If $W=X$, the initial condition is $(a_0, b_0, c_0) = (0,0,1)$, hence the unique solution of the Cauchy
problem is given by the functions:
$$
 a_t = \int_0^t (\frac{X\alpha}{\alpha} -1)\circ h^\alpha_\tau \,d\tau \,, \quad b_t \equiv 0 \,, \quad c_t \equiv 1\,;
$$
Formula~(\ref{eq:tangent_flow}) is therefore proved.
\end{proof}

\section {Cohomological equation and quantitative equidistribution}\label{sec:equidistribution}

It is a general fact that all properties of the cohomological equation and of the asymptotics of ergodic integrals  for all smooth time-changes of any smooth flow can be read from the corresponding
properties of the flow itself.

\smallskip
Let  $L^2(M):= L^2(M, \text{\rm vol})$ denote the Hilbert space of square-integrable function
with respect to the standard volume form and , for any $r \geq 0$ let $W^r(M)\subset L^2(M)$ denote 
the standard Sobolev spaces on the compact manifold $M$ and let $W^{-r}(M)$ denote the dual spaces. 

Let $L^2(M, \text{\rm vol}_\alpha)$ denote the Hilbert space of square-integrable function with respect 
to the $\{h^\alpha_t\}$-invariant volume form and let
$$
L^2_0(M,\text{\rm vol}_\alpha):= \{ f\in L^2(M) \vert  \int_M f \text{\rm vol}_\alpha =0\}\,.
$$
Let $ \mathcal D'(M)$ be the space of distributions on $M$.  For any distribution $D\in \mathcal D'(M)$, let $D_\alpha$ be the distribution defined as follows:
$$
D_\alpha (f) := D (\alpha f ) \,, \quad \text{ for all } f \in C^\infty(M)\,.
$$
The distribution $D_\alpha$ is well-defined and belongs 
to the dual Sobolev space $W^{-r}(M)$ whenever $\alpha \in W^r(M)$ 
and $D\in W^{-r}(M)$ for any $r>3/2$. In fact,   the Sobolev space $W^r(M)$, endowed with the standard structure of Hilbert space and with the standard product of functions, is a Banach algebra for $r>\dim(M)/2$, which is here the case since $M$ is a $3$-dimensional manifold. The subspace $\mathcal I^{-r}_\alpha(M)\subset W^{-r}(M)$ of invariant distributions for the time-change $\{h^\alpha_t\}$ can be described in terms of the subspace  $\mathcal I^{-r}_U(M)\subset W^{-r}(M)$  of invariant distributions for the horocycle flow (described in \cite{FlaFo}).

\begin{lemma} Let $r>3/2$ and let $\alpha \in W^r(M)$. The following holds:
$$
\mathcal I^{-r}_\alpha(M):= \{  D_\alpha \vert  D\in \mathcal I^{-r}_U(M)\}\,.
$$
\end{lemma}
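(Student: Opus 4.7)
The plan is to observe that the statement is essentially an algebraic consequence of the identity $U_\alpha = U/\alpha$, combined with the continuity of multiplication by $\alpha$ and $1/\alpha$ on the Sobolev spaces $W^r(M)$ for $r > 3/2$. Since $M$ is a compact $3$-manifold and $W^r(M)$ is a Banach algebra for $r > 3/2$, multiplication by $\alpha \in W^r(M)$ is a bounded operator on $W^r(M)$; moreover $\alpha$ is everywhere strictly positive and smooth (being the infinitesimal generator of the smooth cocycle $\tau$), so $1/\alpha$ also belongs to $W^r(M)$ and multiplication by $1/\alpha$ is likewise bounded on $W^r(M)$. Dualizing, the assignment $D \mapsto D_\alpha$, where $D_\alpha(f) := D(\alpha f)$, is a well-defined continuous linear bijection of $W^{-r}(M)$ onto itself, with inverse $\mathcal D \mapsto \mathcal D_{1/\alpha}$.

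I would then verify the two inclusions separately. For the inclusion $\supset$, fix $D \in \mathcal I^{-r}_U(M)$; then for any test function $f \in C^\infty(M)$ one has
\[
D_\alpha(U_\alpha f) \;=\; D\!\left(\alpha \cdot \tfrac{Uf}{\alpha}\right) \;=\; D(Uf) \;=\; 0,
\]
so that $D_\alpha \in \mathcal I^{-r}_\alpha(M)$. For the inclusion $\subset$, given any $\mathcal D \in \mathcal I^{-r}_\alpha(M)$, define $D := \mathcal D_{1/\alpha}$, i.e.\ $D(g) := \mathcal D(g/\alpha)$. By the discussion above, $D \in W^{-r}(M)$, and for every $f \in C^\infty(M)$,
\[
D(Uf) \;=\; \mathcal D\!\left(\tfrac{Uf}{\alpha}\right) \;=\; \mathcal D(U_\alpha f) \;=\; 0,
\]
so $D \in \mathcal I^{-r}_U(M)$. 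Finally, $D_\alpha(f) = D(\alpha f) = \mathcal D(f)$ recovers $\mathcal D$, showing $\mathcal D$ lies in the claimed set.

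The only non-formal point is the mapping property of $D \mapsto D_\alpha$ on $W^{-r}(M)$, which is precisely the observation already recorded just before the lemma statement: the Sobolev space $W^r(M)$ is a Banach algebra for $r > \dim(M)/2 = 3/2$, so pointwise multiplication by $\alpha$ (and, because $\alpha$ is bounded away from zero on the compact manifold $M$, also by $1/\alpha$) is continuous on $W^r(M)$, and hence by duality on $W^{-r}(M)$. I do not expect any genuine obstacle beyond recording this functional-analytic bookkeeping; the heart of the proof is the one-line algebraic identity $\alpha \cdot U_\alpha f = Uf$ that pairs $U$-invariance of $D$ with $U_\alpha$-invariance of $D_\alpha$.
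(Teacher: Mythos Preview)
Your proposal is correct and follows essentially the same approach as the paper: both rely on the Banach algebra property of $W^r(M)$ for $r>3/2$ to conclude that $D\mapsto D_\alpha$ is an automorphism of $W^{-r}(M)$, and then use the algebraic identity $\alpha\,U_\alpha f = Uf$ to translate $U$-invariance of $D$ into $U_\alpha$-invariance of $D_\alpha$. The only cosmetic difference is that the paper phrases the equivalence as a single biconditional chain tested on $f\in W^{r+1}(M)$, whereas you split it into two inclusions tested on $f\in C^\infty(M)$; by density these are the same argument.
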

\begin{proof}  By the algebra property of the Sobolev space $W^r(M)$ for $r>3/2$,
for any non-vanishing function $\alpha \in W^r(M)$, the map $D \to D_\alpha$ is an 
automorphism of the Sobolev space $W^{-r}(M)$ (it is continuous, invertible with
continuous inverse). By definition, a distribution $D_\alpha
\in\mathcal I^{-r}_\alpha(M)$, that is, the distribution $D_\alpha\in W^{-r}(M)$ is invariant  for the time-change 
$\{h^\alpha_t\}$ of generator $U_\alpha=U/\alpha$,  if and only if 
$$
D_\alpha ( U_\alpha f) = D_\alpha (Uf / \alpha) =D(Uf)= 0\,,  \quad \text{ \rm for all }\, f\in W^{r+1}(M)\,,
$$
if and only if the distribution $D\in  W^{-r}(M)$ is invariant under the horocyle flow
$\{h^U_t\}$, that is, if and only if $D\in \mathcal I^{-r}_U(M)$, as stated.
\end{proof}
Let us recall that we say that a function $f$ on $M$ is a coboundary for the flow $\{h^\alpha_t\}$ if there exists a 
function $u$ on $M$, called the transfer
function, such that $U_\alpha f = u$. 
The subspace of coboundaries for the time-changes of the horocycle flow is  described by the 
following dictionary.

\begin{lemma} A function $f$ on $M$ is a coboundary for the time-change $\{h^\alpha_t\}$ with transfer
function $u$ on $M$ if and only if the function $\alpha f $ is a coboundary for the flow $\{h^U_t\}$
with transfer function $u$ on $M$. 
\end{lemma}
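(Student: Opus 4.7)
The plan is to unfold the definitions and reduce the equivalence to the pointwise algebraic identity relating the generators of the two flows. Since by construction $U_\alpha = U/\alpha$, for every smooth function $u$ on $M$ one has
\begin{equation*}
U u \, = \, \alpha \cdot U_\alpha u \, .
\end{equation*}
Applying this identity to the cohomological equation, $f$ is a coboundary for the time-change $\{h^\alpha_t\}$ with transfer function $u$, i.e.\ $U_\alpha u = f$, if and only if $U u = \alpha \cdot U_\alpha u = \alpha f$, which is exactly the statement that $\alpha f$ is a coboundary for the classical horocycle flow $\{h^U_t\}$ with the \emph{same} transfer function $u$. Both implications follow in a single line, and the transfer function $u$ is literally the same object on the two sides of the equivalence.

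There is essentially no obstacle. The only point worth a brief comment is regularity: since $\alpha$ is smooth (or, in the Sobolev setting, lies in $W^r(M)$ for $r>3/2$) and is bounded away from zero by the positivity normalization~\eqref{eq:normalization}, multiplication by $\alpha$ and by $1/\alpha$ act as bounded isomorphisms on $W^r(M)$ by the Banach algebra property recalled in the previous lemma. Hence the correspondence $u \leftrightarrow u$, $f \leftrightarrow \alpha f$ preserves the smoothness (or Sobolev regularity) of both the coboundary and the transfer function, so the dictionary between the two cohomological equations is an honest bijection on each regularity class. In this sense the lemma is really just a restatement of the defining relation $U_\alpha = U/\alpha$.
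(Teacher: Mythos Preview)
Your proof is correct and follows exactly the same approach as the paper: both arguments simply unwind the definition of coboundary and use the defining relation $U_\alpha = U/\alpha$ to convert $U_\alpha u = f$ into $Uu = \alpha f$. Your additional remarks on regularity are accurate but go beyond what is needed, since the lemma is stated at the level of functions on $M$ without a prescribed regularity class.
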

\begin{proof}
It follows immediately from the definition of coboundary recalling that by definition $U_\alpha= U/\alpha$.
\end{proof}
By the above lemmas the theory of the cohomological equation for time-changes is reduced to
that for the classical horocycle flow, developed in \cite{FlaFo}.
We state the main results below for the convenience of the reader.

\smallskip
By the theory of unitary representations for $SL(2,\R)$ (see \cite{Ba}, \cite{GF}, \cite{GN}), 
the Sobolev spaces $W^r(M)$ split as direct sums of irreducible sub-representations and each irreducible sub-representations characterized  up to unitary equivalence by the spectral value 
$\mu \in \sigma(\square)$ of the restriction of the Casimir operator $\square$ (a normalized 
generator of the center of the enveloping algebra),  that is, for all $r\in \R$, the following 
splitting holds:
\begin{equation}
\label{eq:splittingone}
W^r(M) =  \bigoplus_{\mu\in \sigma(\square)}  W^r(H_\mu) 
\end{equation}
Non-trivial irreducible unitary representations belong to three different series: the {\it principal series}  
($\mu \geq 1/4$), the complementary series ($0<\mu<1/4$) and discrete series ($\mu \leq 0$). 
We recall that the positive spectral values of the Casimir operator coincide with the eigenvalues
of the Laplace-Beltrami operator on the (compact) hyperbolic surface, while the non-positive
spectral values are given by the set of non-positive integers $\{ -n^2 + n\vert n\in \Z^+\}$. 

\smallskip
Let $\alpha \in W^r(M)$ for any $r>3/2$ be any strictly positive function. For every Casimir
parameter $\mu\in \R$, let
$$
W_\alpha^r (H_\mu) :=   \{ f \in L^2(M, \text{\rm vol}_\alpha ) \vert    \alpha f \in W^r(H_\mu)\}\,.
$$
By the above splitting~(\ref{eq:splittingone}), there is also a splitting 
\begin{equation}
\label{eq:splittingtwo}
W^r(M) = \bigoplus_{\mu\in \sigma(\square)}  W^r_\alpha(H_\mu) 
\end{equation}

For every $\mu\in \sigma(\square)$, let $\mathcal I^{-r}_U(H_\mu):= \mathcal I^{-r}_U(M)\cap W^{-r}
(H_\mu)$ and let  $\mathcal I^{-r}_\alpha(H_\mu)$ be the distributional space defined as 
$$
 \mathcal I^{-r}_\alpha(H_\mu) := \{  D_\alpha \vert  D\in \mathcal I^{-r}_U(H_\mu)\}\,.
$$
\begin{thm} Let $\alpha \in W^r(M)$ for any $r>3/2$.  The space of $U_\alpha$-invariant
distributions has a splitting
$$
\mathcal I^{-r}_\alpha(M)=  \bigoplus_{\mu \in \sigma(\square)} \mathcal I^{-r}_\alpha(H_\mu)\,.
$$
The subspace $\mathcal I^{-r}_\alpha(H_\mu)$ has dimension $2$ for all irreducible sub-representations of the principal and complementary series ($\mu >0$) and for irreducible sub-representations of the discrete series it has dimension $1$ if  $r > \frac{1+\sqrt{1-4\mu}}{2}$
and is trivial  otherwise.

For every function $f \in W_\alpha^r (H_\mu)$,  the cohomological equation $U_\alpha u= f$
has a unique solution $u\in H_\mu \subset L^2(M)$ if and only if 
$f \in \text{\rm Ann} [I^{-r}_\alpha(H_\mu)]$. In case a solution $u\in H_\mu$ exists then 
$u \in W^s(H_\mu)$ for all $s<r-1$ and the following a priori bound holds: there exists
a constant $C_{s,r}>0$, indipendent on $\mu\in \sigma(\square)$, such that 
$$
\Vert u \Vert_s \leq  C_{s,r} \Vert  \alpha f\Vert_r\,.
$$
\end{thm}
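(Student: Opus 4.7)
The strategy is to reduce every assertion to the corresponding known result for the classical horocycle flow from \cite{FlaFo}, by exploiting the identity $U_\alpha = U/\alpha$ together with the linear isomorphism $D\mapsto D_\alpha$ on $W^{-r}(M)$ established in the preceding lemma. In essence, the theorem is formal once the dictionary between the two flows is in place.

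For the splitting and the dimension formula, I would start from the analogous statements for the classical generator $U$. The decomposition $\mathcal I^{-r}_U(M)=\bigoplus_\mu \mathcal I^{-r}_U(H_\mu)$, with each fiber of dimension $2$ on the principal and complementary series and of dimension $1$ (resp.\ $0$) on the discrete series according to whether $r>(1+\sqrt{1-4\mu})/2$ (resp.\ not), is exactly the Flaminio--Forni classification. Since $r>3/2$, the Sobolev algebra property makes multiplication by $\alpha$ a continuous automorphism of $W^r(M)$, hence by duality $D\mapsto D_\alpha$ is a continuous linear automorphism of $W^{-r}(M)$. Applying it summand by summand and using the very definition $\mathcal I^{-r}_\alpha(H_\mu):=\{D_\alpha:D\in \mathcal I^{-r}_U(H_\mu)\}$, injectivity of the map ensures the resulting sum remains direct and equals $\mathcal I^{-r}_\alpha(M)$; the dimensions transfer unchanged.

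For the cohomological equation, the clean observation is that $U_\alpha u=f$ if and only if $Uu=\alpha f$. By definition of $W^r_\alpha(H_\mu)$, the hypothesis $f\in W^r_\alpha(H_\mu)$ places $\alpha f$ in $W^r(H_\mu)$, so the Flaminio--Forni theorem applies to the classical equation $Uu=\alpha f$ and gives a unique solution $u\in H_\mu\subset L^2(M)$ precisely when $\alpha f\in \text{\rm Ann}[\mathcal I^{-r}_U(H_\mu)]$. Rewriting the duality pairing through $D(\alpha f)=D_\alpha(f)$ shows this condition is equivalent to $f\in \text{\rm Ann}[\mathcal I^{-r}_\alpha(H_\mu)]$, exactly as stated. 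When a solution exists, the a priori Sobolev estimate $\|u\|_s\leq C_{s,r}\|\alpha f\|_r$ for every $s<r-1$ with constant $C_{s,r}$ independent of $\mu$ is inherited verbatim from the Flaminio--Forni bound for the classical horocycle equation.

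The main obstacle is not so much a step in the argument as a bookkeeping point: one must be careful that $D\mapsto D_\alpha$ genuinely carries the Casimir-split direct sum $\bigoplus_\mu \mathcal I^{-r}_U(H_\mu)$ to a direct sum $\bigoplus_\mu \mathcal I^{-r}_\alpha(H_\mu)$ (the individual $\mathcal I^{-r}_\alpha(H_\mu)$ need not lie in the Casimir component $W^{-r}(H_\mu)$ since $\alpha$ need not commute with $\square$), and that the constant in the a priori bound is truly $\mu$-uniform in \cite{FlaFo}. Both points are routine to verify and require no new input beyond the results already quoted.
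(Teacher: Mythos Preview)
Your proposal is correct and follows exactly the approach the paper takes. In fact, the paper does not give a separate proof of this theorem at all: it simply remarks that ``by the above lemmas the theory of the cohomological equation for time-changes is reduced to that for the classical horocycle flow, developed in \cite{FlaFo},'' and then states the theorem as a direct translation of the Flaminio--Forni results via the dictionary $D\mapsto D_\alpha$ and $U_\alpha u=f\iff Uu=\alpha f$, which is precisely what you have written out.
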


\bigskip
The quantitative equidistribution for  
time-changes, and in fact the complete asymptotics of ergodic averages,  can also be derived from the corresponding results for the classical horocycle flow, derived in \cite{FlaFo} and \cite{BuFo}.  By change of variable 
there exists a function $T: M\times \R \to \R$ such that, for any function $f$ on $M$,
\begin{equation}
\begin{aligned}
\int_0^{\mathcal T} f \circ h^\alpha_\tau (x) d\tau &= \int_0^{T(x,\mathcal T)} f \circ h^\alpha_{\tau(x,t)} (x) 
\frac{\partial\tau}{\partial t} (x,t) dt \\&=   \int_0^{T(x,\mathcal T)} (\alpha f) \circ h^U_t (x) dt \,. 
\end{aligned}
\end{equation}
By the above formula for the constant function $f=1$, it follows that the function $T: M\times \R \to \R$
is given by the following identity: for all $(x, \mathcal T) \in M\times \R$,
\begin{equation}
\label{eq:timechangefunction}
\mathcal T =  \int_0^{T(x,\mathcal T)}  \alpha \circ h^U_t (x)\,dt \,.
\end{equation}
The asymptotics of the function $T: M\times \R \to \R$ for large $\mathcal T\in \R$, uniformly with
respect to $x\in M$, can be derived by the quantitative equidistribution result of M.~Burger \cite{Bur}
(see also  \cite{FlaFo}). 

\smallskip
Let $\mu_0>0$ be the smallest strictly positive eigenvalue of the Casimir operator (that, as we remarked, coincide with the smallest  eigenvalue for the hyperbolic Laplacian on the
compact surface  $\Gamma\backslash \{z\in \C\vert \,\Im(z)>0\}$). Let $\nu_0 \in [0, 1)$ and
$\epsilon_0 \in \{0,1\}$ be the parameters defined as follows: 
\begin{equation}
\nu_0 := 
\begin{cases}   
\sqrt{1-4\mu_0} \,, \quad &\text{ \rm if } \,  \mu_0 <1/4\,, \\
0  \,, \quad &\text{ \rm if } \,  \mu_0 \geq 1/4\,;
\end{cases} 
\quad \quad
\epsilon_0 := 
\begin{cases}   
0 \,, \quad &\text{ \rm if } \,  \mu_0 \not =1/4\,, \\
1   \,, \quad &\text{ \rm if } \,  \mu_0 = 1/4\,. 
\end{cases} 
\end{equation}

\begin{lemma} 
\label{lemma:timechangefunction}
For any $r>3$, there exists a constant $C_r>0$  such that the following estimate holds. Let 
$\alpha \in W^r(M)$. For all $(x,\mathcal T)\in M\times\R^+$, 
$$
\vert T(x, \mathcal T) - \mathcal T \vert  \leq  C_r \Vert \alpha \Vert_r 
 \mathcal T^{\frac{1+\nu_0}{2}} (1+ \log^+ \mathcal T)^{\epsilon_0}\,.
$$
\end{lemma}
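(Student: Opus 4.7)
The plan is to express the defect $T(x,\mathcal T)-\mathcal T$ as a classical horocycle ergodic integral of the zero-mean function $\alpha-1$, then invoke the sharp quantitative equidistribution theorem for the classical horocycle flow, and finally bootstrap from a bound in terms of $T(x,\mathcal T)$ to one in terms of $\mathcal T$.

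Writing $S:=T(x,\mathcal T)$ and using the normalization $\int_M \alpha\,\text{\rm vol}=1$ together with the defining identity \eqref{eq:timechangefunction}, I would obtain
\begin{equation*}
\mathcal T-S \;=\; \int_0^S \alpha\circ h^U_t(x)\,dt-\int_0^S 1\,dt \;=\; \int_0^S (\alpha-1)\circ h^U_t(x)\,dt.
\end{equation*}
Since $\alpha-1\in W^r(M)$ has zero mean with respect to $\text{\rm vol}$, the right-hand side is exactly the type of object controlled by the sharp horocycle equidistribution estimates of Burger \cite{Bur} (see also \cite{FlaFo}, \cite{BuFo}): there is a constant $C_r>0$, depending only on $r>3$, such that
\begin{equation*}
\left|\int_0^S (\alpha-1)\circ h^U_t(x)\,dt\right| \;\leq\; C_r\,\Vert\alpha\Vert_r\; S^{\frac{1+\nu_0}{2}}\,(1+\log^+S)^{\epsilon_0},
\end{equation*}
with the exponents dictated by the bottom $\mu_0$ of the Casimir spectrum through the pair $(\nu_0,\epsilon_0)$ defined just before the statement.

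To finish I would replace $S$ by $\mathcal T$ on the right-hand side through a short bootstrap, using that $(1+\nu_0)/2<1$. If $S\leq 2\mathcal T$, the substitution yields the desired estimate up to an overall factor $2^{(1+\nu_0)/2}$, which can be absorbed into $C_r$. If instead $S>2\mathcal T$, then $S/2<|S-\mathcal T|$, and the previous inequality forces $S^{(1-\nu_0)/2}\leq 2C_r\Vert\alpha\Vert_r(1+\log^+S)^{\epsilon_0}$, which bounds both $S$ and $\mathcal T$ in terms of $r$ and $\Vert\alpha\Vert_r$; in this bounded regime the claim follows directly using the Sobolev embedding $W^r(M)\hookrightarrow C^0(M)$ (valid for $r>3/2$) and the positivity of $\alpha$ on the compact manifold $M$ to compare $S$ and $\mathcal T$ linearly.

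The only non-routine point is the bootstrap step; everything else is a direct reading of the normalization of $\alpha$ and of the sharp horocycle equidistribution bounds already available in the literature. In spirit, the lemma is simply the observation that the time-change function $T(x,\mathcal T)$ is the implicit inverse of the classical horocycle ergodic integral of $\alpha$, so its asymptotics are pinned down to leading order by the mean of $\alpha$, with error controlled precisely by the spectral parameters $\nu_0$ and $\epsilon_0$.
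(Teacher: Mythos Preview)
Your proposal is correct and follows essentially the same route as the paper: rewrite $\mathcal T - T(x,\mathcal T)$ via \eqref{eq:timechangefunction} as the horocycle ergodic integral of the zero-mean function $\alpha-1$, apply the sharp equidistribution bound from \cite{FlaFo} to get control in terms of $T(x,\mathcal T)^{(1+\nu_0)/2}$, and then bootstrap to replace $T(x,\mathcal T)$ by $\mathcal T$. The only difference is cosmetic: the paper phrases the bootstrap as ``for $\mathcal T$ large the ratio $\mathcal T/T(x,\mathcal T)$ is close to $1$'', whereas you run a dichotomy $S\le 2\mathcal T$ versus $S>2\mathcal T$; your treatment of the bounded regime is in fact more explicit than the paper's.
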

\begin{proof} Let us assume $\mu_0 \not =1/4$. The argument in the case  $\mu_0=1/4$ is similar. 
By the normalization condition~(\ref{eq:normalization}), it follows from the identity~(\ref{eq:timechangefunction}) and from \cite{FlaFo}, Theorem 1.5, that there exists  a constant 
$C'_r>0$ such that, for all $(x,\mathcal T) \in M\times \R$, 
\begin{equation}
\label{eq:timedifference}
\vert \mathcal T - T(x,\mathcal T) \vert  \leq C'_r \Vert \alpha \Vert_r T(x,\mathcal T)^{\frac{1+\nu_0}{2}}\,.
\end{equation}
Thus for any $v>0$ there exists $\mathcal T_v>0$ such that, for all $x\in M$ and 
$\mathcal T \geq \mathcal T_v$,
$$
\vert \frac{ \mathcal T} {T(x,\mathcal T)} -1  \vert  \leq   v \,,
$$
hence $T(x,\mathcal T) \leq  \mathcal T / (1-\sigma)$. Thus by formula~(\ref{eq:timedifference}),
for all $x\in M$ and for all $\mathcal T \geq \mathcal T_\sigma$, 
$$
\vert \mathcal T - T(x,\mathcal T) \vert  \leq \frac{C'_r}{(1-v)^{\frac{1+\nu_0}{2}}} 
 \Vert \alpha \Vert_r \mathcal T^{\frac{1+\nu_0}{2}}\,.
$$
The argument is thus completed. 
\end{proof}

By Lemma~\ref{lemma:timechangefunction}, the asymptotics of ergodic integrals for smooth time-changes of horocycle flows is entirely analogous  and can be derived from the corresponding results 
for horocycle flows, proved in \cite{FlaFo} (see also \cite{BuFo}).

For any $\mu >0$, let $\nu_\mu \in \R$ and $\epsilon_\mu\in \{0,1\}$ be defined as follows:
$$
\nu_\mu :=  \begin{cases}   
\sqrt{1-4\mu} \,, \quad &\text{ \rm if } \,  \mu <1/4\,, \\
0   \,, \quad &\text{ \rm if } \,  \mu \geq 1/4\,;
\end{cases} 
\quad  \epsilon_\mu:= \begin{cases}   
0 \,, \quad &\text{ \rm if } \,  \mu\not= 1/4\,, \\
1   \,, \quad &\text{ \rm if } \,  \mu = 1/4\,.
\end{cases} 
$$

\begin{thm}
\label{thm:quant_equidist}
For any $r>3$, there exists a constant $C_r>0$ such that the following holds.  Let
$\alpha \in W^r(M)$. 
Let $H_\mu$ be an irreducible sub-representation of the principal or complementary
series ($\mu>0$). There exists a basis $\{D^+_{\alpha,\mu}, D^-_{\alpha,\mu}\} \subset 
 \mathcal I^{-r}_\alpha(H_\mu)$ such that for any function $f\in W^r_\alpha(H_\mu)$ and 
for all  $(x,\mathcal T)\in  M\times\R^+$,
\begin{equation}
\begin{aligned}
\vert  \int_0^{\mathcal T}  f\circ h^\alpha_\tau (x) d\tau \vert    &\leq C_r 
\vert D^-_{\alpha,\mu}(f) \vert \mathcal T^{\frac{1+ \nu_\mu}{2}}  \\ &+  C_r \left(
\vert D^+_{\alpha,\mu}(f) \vert \mathcal T^{\frac{1- \nu_\mu}{2}} (1+\log^+{\mathcal T})^{\epsilon_\mu}  + \Vert f \Vert_r\right)\,.
\end{aligned}
\end{equation}
 Let $H_\mu$ be an irreducible sub-representation of the discrete
series ($\mu\leq 0$). For any function $f\in W^r_\alpha(H_\mu)$ and  for all  $(x,\mathcal T)\in  
M\times\R^+$,
\begin{equation}
\vert  \int_0^{\mathcal T}  f\circ h^\alpha_\tau (x) d\tau \vert  \leq C_r \Vert f\Vert_r 
(1+ \log^+{\mathcal T})\,.
\end{equation}
\end{thm}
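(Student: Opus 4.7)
The plan is to reduce everything to the corresponding statement for the classical horocycle flow via the change-of-variable identity
\begin{equation*}
\int_0^{\mathcal T} f\circ h^\alpha_\tau(x)\,d\tau \;=\; \int_0^{T(x,\mathcal T)} (\alpha f)\circ h^U_t(x)\,dt,
\end{equation*}
which was already recorded in the text. Since $\alpha\in W^r(M)$ and $f\in W^r_\alpha(H_\mu)$, by definition $\alpha f\in W^r(H_\mu)$, and $\|\alpha f\|_r\leq C\|\alpha\|_r\|f\|_r$ by the algebra property of $W^r(M)$ for $r>3/2$. So the inner integral is an ergodic integral of a Sobolev function in a single irreducible component $H_\mu$ of the horocycle flow, to which the quantitative equidistribution theorems of \cite{FlaFo}, \cite{BuFo} apply directly: there exist invariant distributions $D^\pm_{U,\mu}\in\mathcal I^{-r}_U(H_\mu)$ and a constant $C_r>0$ (independent of $\mu$) such that, for the principal/complementary series and all $T>0$,
\begin{equation*}
\Bigl|\int_0^{T}(\alpha f)\circ h^U_t\,dt\Bigr|\leq C_r\bigl(|D^-_{U,\mu}(\alpha f)|\,T^{\frac{1+\nu_\mu}{2}} + |D^+_{U,\mu}(\alpha f)|\,T^{\frac{1-\nu_\mu}{2}}(1+\log^+T)^{\epsilon_\mu} +\|\alpha f\|_r\bigr),
\end{equation*}
and for the discrete series the analogous logarithmic bound. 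I define $D^\pm_{\alpha,\mu}(f):=D^\pm_{U,\mu}(\alpha f)$; by the first lemma of Section \ref{sec:equidistribution}, these form a basis of $\mathcal I^{-r}_\alpha(H_\mu)$.

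Next I substitute $T=T(x,\mathcal T)$ and convert the $T(x,\mathcal T)$-dependence into a $\mathcal T$-dependence using Lemma \ref{lemma:timechangefunction}, which gives $T(x,\mathcal T)=\mathcal T + O(\|\alpha\|_r\,\mathcal T^{(1+\nu_0)/2}(1+\log^+\mathcal T)^{\epsilon_0})$. Writing
\begin{equation*}
T(x,\mathcal T)^{\frac{1+\nu_\mu}{2}} = \mathcal T^{\frac{1+\nu_\mu}{2}}\Bigl(1+O\bigl(\mathcal T^{-\frac{1-\nu_0}{2}}(1+\log^+\mathcal T)^{\epsilon_0}\bigr)\Bigr)^{\frac{1+\nu_\mu}{2}},
\end{equation*}
and using that $\nu_\mu\leq \nu_0<1$, the second factor is $1+o(1)$ uniformly in $\mu$; the extra additive error is of order $\mathcal T^{(\nu_\mu+\nu_0)/2}(\log^+\mathcal T)^{\epsilon_0}\leq \mathcal T^{(1+\nu_\mu)/2}$, so it can be absorbed into the leading term (at the cost of enlarging the constant $C_r$). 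The same estimate applied to the lower-order term $T^{(1-\nu_\mu)/2}(1+\log^+T)^{\epsilon_\mu}$ gives an admissible contribution to the second term of the stated inequality, while the $\|\alpha f\|_r$ term is absorbed into $\|f\|_r$ through the algebra bound $\|\alpha f\|_r\leq C\|\alpha\|_r\|f\|_r$.

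For an irreducible sub-representation of the discrete series, the same scheme applies with the logarithmic bound from \cite{FlaFo}, and the error coming from $T(x,\mathcal T)\neq\mathcal T$ only changes the logarithmic constant since $\log T(x,\mathcal T)=\log\mathcal T+O(1)$ for $\mathcal T$ large.

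The main technical point is to verify that the $T(x,\mathcal T)\leftrightarrow\mathcal T$ substitution is compatible with the hierarchy of exponents $\nu_\mu\leq\nu_0$ and yields a bound with constants independent of $\mu$. This is essentially bookkeeping once one observes that $\nu_0$ is the maximal exponent among all $\nu_\mu$ with $\mu>0$, so the time-reparametrization error is always dominated by the $D^-_{\alpha,\mu}$-term (possibly with a power of $\log\mathcal T$ absorbed in the constants); a uniform constant $C_r$ independent of $\mu$ comes from the uniform constants in the equidistribution results of \cite{FlaFo}, \cite{BuFo}, which in turn rely on the uniform scaling properties of the irreducible models of $SL(2,\R)$.
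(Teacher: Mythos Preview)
Your proposal is correct and follows exactly the approach the paper indicates: reduce to the classical horocycle flow via the change-of-variable identity, apply the \cite{FlaFo}/\cite{BuFo} bounds to $\alpha f\in W^r(H_\mu)$, set $D^\pm_{\alpha,\mu}(f):=D^\pm_{U,\mu}(\alpha f)$, and then use Lemma~\ref{lemma:timechangefunction} to replace $T(x,\mathcal T)$ by $\mathcal T$. The paper does not spell out the substitution step; your error analysis (using $\nu_\mu\le\nu_0<1$ so that $T(x,\mathcal T)^{(1\pm\nu_\mu)/2}$ differs from $\mathcal T^{(1\pm\nu_\mu)/2}$ by a lower-order term absorbable into the main bound) is the right way to complete it.
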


\bigskip

\section {Quantitative Mixing}\label{sec:mixing}
In this section we show that the following result on quantitative mixing for smooth time-changes of horocycle flows can be derived by combining quantitative equidistribution results
with B.~Marcus' proof of mixing 
\cite{Ma}. 

Let us denote $\Vert \cdot \Vert _X$ the graph norm of the densely defined 
Lie derivative operator $\mathcal L_X:L^2(M) \to L^2(M)$, that is, for all
functions $g\in L^2(M)$ which belong to the maximal domain $\text{\rm dom}(\mathcal L_X)
\subset L^2(M)$ of $\mathcal L_X$,
$$
\Vert g \Vert_X := (\Vert g \Vert_0^2 + \Vert Xg \Vert_0^2)^{1/2}\,.
$$

\begin{thm} 
\label{thm:mixingbound2}
For any $r> 11/2$ and for any $\alpha \in W^r(M)$, there exists  a constant
$C''_r(\alpha)>0$ such that the following holds. For any  zero-average function $f\in W^r(M) \cap L^2_0(M, \text{\rm vol}_\alpha)$, for any function $g\in \text{\rm dom}(\mathcal L_X)$  and for any $t\geq 1$,
$$
\vert <f\circ h^\alpha_t, g>_{L^2(M, \text{\rm vol}_\alpha)}  \vert \leq  
C_r( \alpha) \Vert f \Vert_r \Vert g\Vert_X
\,t^{-\frac{1-\nu_0}{2}} (1+\log t)^{\epsilon_0} \,. 
$$
\end{thm}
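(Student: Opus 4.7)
The plan is to combine Marcus' classical mixing mechanism for horocycle flows with the quantitative equidistribution of Theorem \ref{thm:quant_equidist}, and then use a bootstrap trick to upgrade the suboptimal rate produced by the naive argument to the sharp rate stated in the theorem.

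First I would approximate $g$ by its average $g_\eta(y) := \eta^{-1}\int_0^\eta g(\phi^X_s y)\,ds$ along short geodesic segments of length $\eta \in (0,1]$; this produces an $L^2$-error $\Vert g - g_\eta\Vert_0 \leq \eta \Vert Xg\Vert_0$. Using Fubini and the $\phi^X$-invariance of the Haar volume, the main term rewrites as
\[
\langle f\circ h^\alpha_t, g_\eta\rangle_{\text{\rm vol}_\alpha} = \int_M g(y)\, I_{t,\eta}(y)\, d\text{\rm vol}(y), \quad I_{t,\eta}(y) := \frac{1}{\eta}\int_0^\eta f(h^\alpha_t\phi^X_{-s}y)\,\alpha(\phi^X_{-s}y)\, ds.
\]
By Lemma \ref{lemma:tangent_flow} the curve $s\mapsto h^\alpha_t(\phi^X_{-s}y)$ has tangent vector $-A_t(\phi^X_{-s}y)\, U_\alpha - X$ (evaluated at the current point of the curve), where $A_t(y) := \int_0^t (X\alpha/\alpha - 1)\circ h^\alpha_\tau(y)\,d\tau$. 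Since $\int_M X\alpha\,d\text{\rm vol} = 0$ and $\int_M\alpha\,d\text{\rm vol} = 1$, the function $X\alpha/\alpha - 1$ has $\text{\rm vol}_\alpha$-mean $-1$, so Theorem \ref{thm:quant_equidist} applied to it yields $A_t(y) = -t + O(t^{(1+\nu_0)/2})$ uniformly in $y$. Hence the curve is a small transverse perturbation of a $U_\alpha$-orbit arc of length $\approx \eta t$, and after reparametrizing by the $U_\alpha$-coordinate (Jacobian $\sim 1/t$), Theorem \ref{thm:quant_equidist} applied to $f$ along this arc yields the uniform estimate $|I_{t,\eta}(y)| \leq C_r(\alpha)\Vert f\Vert_r (\eta t)^{(\nu_0-1)/2}(1+\log^+ t)^{\epsilon_0}$.

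Combining this with the Cauchy--Schwarz bound on the error, $|\langle f\circ h^\alpha_t, g - g_\eta\rangle_{\text{\rm vol}_\alpha}|\leq C\Vert f\Vert_0\,\eta\Vert Xg\Vert_0$, yields the preliminary estimate
\[
|\langle f\circ h^\alpha_t, g\rangle_{\text{\rm vol}_\alpha}| \leq C_r(\alpha)\Vert f\Vert_r \bigl[(\eta t)^{(\nu_0-1)/2}(1+\log^+ t)^{\epsilon_0}\Vert g\Vert_0 + \eta\Vert Xg\Vert_0\bigr],
\]
and optimizing in $\eta$ gives only the suboptimal exponent $-(1-\nu_0)/(3-\nu_0)$. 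The bootstrap upgrades this: writing $g - g_\eta = -\int_0^\eta (1-u/\eta) Xg\circ\phi^X_u\, du$, the error term is a weighted average of correlations $\langle f\circ h^\alpha_t, Xg\circ\phi^X_u\rangle_{\text{\rm vol}_\alpha}$; feeding the preliminary estimate back into these correlations produces an error bound independent of $\eta$. Choosing $\eta$ of order one in the main term then produces the main-term rate $t^{(\nu_0-1)/2}(1+\log^+ t)^{\epsilon_0}$, and iterating the bootstrap finitely many times drives the exponent to its fixed point $(1-\nu_0)/2$.

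The main obstacle will be keeping the hypothesis $g\in\text{\rm dom}(\mathcal L_X)$ intact through the bootstrap, since naively applying the preliminary estimate to $Xg\circ\phi^X_u$ would require a bound on $\Vert X^2 g\Vert_0$, which is not assumed. This is handled by an integration by parts in the $X$-direction transferring the derivative onto $f\circ h^\alpha_t$ (using that $\mathcal L_X$ is skew-adjoint with respect to the Haar volume up to a lower-order multiplication by $X\alpha/\alpha$), at the cost of an additional Sobolev derivative on $f$ absorbed by the hypothesis $r > 11/2$, and of a linearly growing coefficient $A_t \sim -t$ coming from $X(f\circ h^\alpha_t) = (Xf)\circ h^\alpha_t + A_t (U_\alpha f)\circ h^\alpha_t$ in Lemma \ref{lemma:tangent_flow}, which has to be reabsorbed into the $U_\alpha$-equidistribution gain.
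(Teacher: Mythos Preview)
Your geodesic-averaging setup and the estimate on the main term $I_{t,\eta}$ are in the right spirit (though note that Theorem~\ref{thm:quant_equidist} alone does not apply, since the pushed-forward arc $s\mapsto h^\alpha_t\phi^X_{-s}y$ is not a $U_\alpha$-orbit; you need the equidistribution of \emph{horocycle-like} arcs from \cite{BuFo}, which is Lemma~\ref{lemma:BFbound} here). The real gap is in your treatment of the error term $\langle f\circ h^\alpha_t, g-g_\eta\rangle$. Bounding it by Cauchy--Schwarz as $\Vert f\Vert_0\,\eta\Vert Xg\Vert_0$ throws away all decay in $t$, and this is what forces you into an iteration. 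But once you write $g-g_\eta=-\int_0^\eta(1-u/\eta)\,Xg\circ\phi^X_u\,du$ and change variables by $\phi^X_u$ (which preserves $\text{\rm vol}$), the error becomes a pairing of $Xg$ against the short geodesic integral $\int_0^u f\circ h^\alpha_t\circ\phi^X_{-s}(\cdot)\,ds$ --- the \emph{same} pointwise quantity you already bounded for the main term. This is exactly the paper's Lemma~\ref{lemma:int_parts}, and with $\eta$ fixed of order one it immediately gives both main and error terms the optimal rate $t^{-(1-\nu_0)/2}$, using only $\Vert g\Vert_0$ and $\Vert Xg\Vert_0$.

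Your iteration, by contrast, is both unnecessary and broken as stated. Applying the preliminary correlation estimate to the pair $(f, Xg\circ\phi^X_u)$ requires $\Vert Xg\Vert_X$, i.e.\ $\Vert X^2g\Vert_0$; your proposed fix (integrate by parts to move $X$ onto $f\circ h^\alpha_t$) produces, via Lemma~\ref{lemma:tangent_flow}, the term $A_t\cdot(U_\alpha f)\circ h^\alpha_t$ with $|A_t|\sim t$, and the resulting contribution $t\langle(U_\alpha f)\circ h^\alpha_t,g\rangle = t\,\frac{d}{dt}\langle f\circ h^\alpha_t,g\rangle$ is of the same order as the quantity you are trying to estimate, not smaller --- the ``reabsorption'' is not a gain. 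Even granting the iteration step, the recursion $\beta\mapsto a(1+\beta)/(1+a)$ with $a=(1-\nu_0)/2$ converges to $a$ only in the limit, never in finitely many steps, so you would obtain at best $t^{-(1-\nu_0)/2+\epsilon}$. The paper's bootstrap is of a different nature: it is the single-step \emph{pointwise} closed inequality of Lemma~\ref{lemma:mixingbound}, obtained by integrating by parts in the geodesic parameter $s$ (Lemma~\ref{lemma:mixingformula2}) so that $\sup_{S\le\sigma}\vert\int_0^S f\circ h^\alpha_t\circ\phi^X_s(x)\,ds\vert$ appears on both sides with right-hand coefficient $\vert v_t(x,\sigma)/t+1\vert+C_r(\alpha)\sigma<1$ for small $\sigma$ and large $t$.
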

The rest of this section is devoted to the proof of Theorem \ref{thm:mixingbound2}. The key idea of Marcus' method is to consider the push-forward under the flow of
a geodesic arc. 

Let $\sigma \in \R^+$ and $(x,t) \in M\times \R$. Let $\gamma^\sigma_{x,t}:[0,\sigma] \to M$ be the parametrized path defined as follows:
\begin{equation}
\label{eq:pushforwardpaths}
\gamma^\sigma_{x,t} (s) :=  h^\alpha_t \circ \phi^X_s(x)\,, \quad \text{ \rm for all } s\in [0,\sigma] \,.
\end{equation}
We begin by computing the velocity of the path $\gamma^\sigma_{x,t}$ and its length. Let
\begin{equation}
\label{eq:velocityfunction}
v_t(x,s) :=  \int_0^t  (\frac{X\alpha}{\alpha} -1) \circ h^\alpha_\tau \circ \phi^X_s (x) d\tau
\end{equation}
\begin{lemma} 
\label{lemma:velocity}
The following identity holds for all $(x,t,s) \in M\times \R \times [0,\sigma]$:
$$
\frac{d\gamma^\sigma_{x,t} }{ds} (s) :=   v_t(x,s) U_\alpha ( \gamma^\sigma_{x,t}(s)) \,+\, 
X( \gamma^\sigma_{x,t}(s) ) \,.
$$
\end{lemma}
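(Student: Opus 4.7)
The plan is to compute the velocity by the chain rule and then invoke Lemma \ref{lemma:tangent_flow} directly. Since $\gamma^\sigma_{x,t}(s) = h^\alpha_t \circ \phi^X_s(x)$ and the geodesic flow $\phi^X_s$ is by definition generated by $X$, the $s$-derivative at a generic $s$ is
\begin{equation*}
\frac{d\gamma^\sigma_{x,t}}{ds}(s) \,=\, Dh^\alpha_t\bigl(X(\phi^X_s(x))\bigr)\,,
\end{equation*}
so the problem reduces to evaluating the tangent cocycle $Dh^\alpha_t$ applied to the geodesic vector field $X$ at the base point $\phi^X_s(x)$.

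The second formula in Lemma \ref{lemma:tangent_flow} gives
\begin{equation*}
Dh^\alpha_t(X) \,=\, \Bigl[\int_0^t \bigl(\tfrac{X\alpha}{\alpha}-1\bigr)\circ h^\alpha_\tau \,d\tau\Bigr]\,U_\alpha\circ h^\alpha_t \,+\, X\circ h^\alpha_t\,,
\end{equation*}
as an identity of vector fields on $M$. Evaluating this identity at the point $\phi^X_s(x)$ — so that $h^\alpha_\tau$ becomes $h^\alpha_\tau\circ \phi^X_s(x)$ and $h^\alpha_t$ becomes $\gamma^\sigma_{x,t}(s)$ — the coefficient of $U_\alpha$ is exactly $v_t(x,s)$ as in the definition \eqref{eq:velocityfunction}, and the second term is $X(\gamma^\sigma_{x,t}(s))$. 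Combining these gives the claimed formula.

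There is essentially no obstacle here: the statement is a pointwise specialization of the already-established tangent flow formula to the particular initial vector $X$ and base point $\phi^X_s(x)$, and the bookkeeping between $Dh^\alpha_t$ and the pushforward $(h^\alpha_t)_\ast$ is handled in the proof of Lemma \ref{lemma:tangent_flow}. The only thing to verify is that the integrand in the definition of $v_t(x,s)$ matches the integrand appearing in $Dh^\alpha_t(X)$ after the substitution $x \mapsto \phi^X_s(x)$, which is immediate.
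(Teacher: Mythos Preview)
Your proof is correct and follows essentially the same approach as the paper: both compute the velocity via the chain rule as $Dh^\alpha_t(X)$ evaluated at $\phi^X_s(x)$, then invoke the second formula of Lemma~\ref{lemma:tangent_flow} and identify the coefficient of $U_\alpha$ with $v_t(x,s)$. The paper's proof is slightly terser but otherwise identical.
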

\begin{proof} The velocity of the geodesic path $\{\phi^X_s(x) \vert s\in [0,\sigma]\}$ is given at all 
points by the geodesic vector field $X$ on $M$, hence
$$
\frac{d\gamma^\sigma_{x,t} }{ds} (s) = Dh^\alpha_t ( X ) \circ \phi^X_s(x)\,.
$$
The formula for the velocity of the path 
$\gamma^\sigma_{x,t}$ then follows from Lemma~\ref{lemma:tangent_flow}.
\end{proof}

From the quantitative equidistribution result for the flow $\{h^\alpha_t\}$ we derive the following
estimate on the velocity function.

\begin{lemma} 
\label{lemma:velocitybound}
For any $r>4$, there exists a constant $C_r>0$  such that the following holds. Let 
$\alpha \in W^r(M)$. For all $(x,s)\in M\times [0,\sigma]$ and for all $t > 0$,
$$
\vert \frac{v_t(x,s)}{t} + 1 \vert  \leq  C_r \Vert \alpha \Vert_r \, t^{-\frac{1-\nu_0}{2}} 
(1+\log^+ t)^{\epsilon_0}\,.
$$
\end{lemma}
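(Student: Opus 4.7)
The strategy is to recast the velocity estimate as a uniform equidistribution statement for $\{h^\alpha_\tau\}$ applied to the single zero-mean function $f:=X\alpha/\alpha$, and then invoke Theorem~\ref{thm:quant_equidist}. Separating the $-1$ in the definition~(\ref{eq:velocityfunction}),
$$
v_t(x,s) + t \;=\; \int_0^t \frac{X\alpha}{\alpha} \circ h^\alpha_\tau\bigl(\phi^X_s(x)\bigr)\,d\tau,
$$
so after dividing by $t$ it suffices to bound the ergodic integral of $f$ at the base point $y:=\phi^X_s(x)$ by $\|\alpha\|_r\,t^{(1+\nu_0)/2}(1+\log^+t)^{\epsilon_0}$ uniformly in $y\in M$.

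The first step is to check that $f\in L^2_0(M,\text{\rm vol}_\alpha)$. Because the geodesic generator $X$ is divergence-free for the Haar volume on $M$,
$$
\int_M f\,\text{\rm vol}_\alpha \;=\; \int_M \frac{X\alpha}{\alpha}\,\alpha\,\text{\rm vol} \;=\; \int_M X\alpha\,\text{\rm vol} \;=\; 0.
$$
The second step is a regularity check: from $\alpha\in W^r(M)$ one gets $X\alpha\in W^{r-1}(M)$; since $\alpha$ is strictly positive on the compact manifold $M$ and $W^{r-1}(M)$ is a Banach algebra for $r-1>3/2$, the function $\alpha^{-1}$ belongs to $W^{r-1}(M)$ (by composing with a smooth inverse on a neighbourhood of $[\min\alpha,\max\alpha]$). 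Consequently $f = (X\alpha)\cdot\alpha^{-1}\in W^{r-1}(M)$, with Sobolev norm controlled by $\|\alpha\|_r$ and $\min_M\alpha$. Since the assumption is $r>4$, we have $r-1>3$, which is precisely the regularity required by Theorem~\ref{thm:quant_equidist}.

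The third step is to apply Theorem~\ref{thm:quant_equidist} at Sobolev level $r-1$ to $f$, decomposed along the Casimir splitting~(\ref{eq:splittingtwo}) as $f=\sum_\mu f_\mu$ with $f_\mu\in W^{r-1}_\alpha(H_\mu)$. Each component $f_\mu$ contributes a term of order $t^{(1+\nu_\mu)/2}(1+\log^+t)^{\epsilon_\mu}$ whose coefficient is controlled by the invariant-distribution evaluations $|D^\pm_{\alpha,\mu}(f_\mu)|$ and by $\|f_\mu\|_{r-1}$; the largest exponent is $\nu_0$, attained at $\mu=\mu_0$, so a Cauchy--Schwarz summation over $\mu\in\sigma(\square)$ yields
$$
\left|\int_0^t f\circ h^\alpha_\tau(y)\,d\tau\right| \;\le\; C'_r\,\|\alpha\|_r\,t^{(1+\nu_0)/2}(1+\log^+t)^{\epsilon_0},
$$
uniformly in $y\in M$. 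Division by $t$ finishes the proof.

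The main obstacle is the summation over the Casimir spectrum: one has to verify that the invariant-distribution evaluations $D^\pm_{\alpha,\mu}(f_\mu)$ and the remainders $\|f_\mu\|_{r-1}$ have sufficient decay in $\mu$ to produce a single bound controlled by a Sobolev norm of $f$ (and hence by $\|\alpha\|_r$). This is standard bookkeeping in the representation-theoretic setup of \cite{FlaFo,BuFo}, and is precisely what forces the margin of regularity $r>4$ beyond the bare threshold $r>3$ in Theorem~\ref{thm:quant_equidist}.
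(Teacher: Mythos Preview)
Your proposal is correct and follows essentially the same route as the paper: both recognize that $v_t(x,s)/t+1$ is an ergodic average along $\{h^\alpha_\tau\}$ of the zero-mean function $X\alpha/\alpha$ (equivalently, that $X\alpha/\alpha-1$ has $\text{\rm vol}_\alpha$-mean $-1$), and both conclude by invoking Theorem~\ref{thm:quant_equidist}. Your write-up is more explicit than the paper's in justifying the regularity loss that forces $r>4$ (namely $\alpha f=X\alpha\in W^{r-1}$) and in spelling out the summation over the Casimir spectrum, but the underlying argument is the same.
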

\begin{proof} The function $v_t(x,s)/t$ is given by an ergodic average along the trajectories of the time-change $\{h^\alpha_t\}$  (evaluated at the point 
$\phi^X_s(x)\in M$) of the function $X\alpha/\alpha -1$ which has average equal to $-1$ with respect to the $\{h^\alpha_t\}$-invariant volume $\text{\rm vol}_\alpha$.
In fact, the latter is given by the formula $\text{\rm vol}_\alpha = \alpha \text{\rm vol}$, hence by the normalization
condition~(\ref{eq:normalization}) and the invariance of the volume $\text{\rm vol}$ under the geodesic flow,
$$
\int_M  (\frac{X\alpha}{\alpha} -1) \,\text{\rm vol}_\alpha = \int_M  (X\alpha -\alpha) \,\text{\rm vol}= -1\,.
$$
The result then follows from the quantitative equidistribution theorem stated above (see Theorem~\ref{thm:quant_equidist}).
\end{proof}

We then estimate the asymptotics (as $t \to +\infty$) of the integral 
$$
\int_0^\sigma  (f\circ h^\alpha_t \circ \phi^X_s)(x) ds\,.
$$
Let $\hat U_\alpha$ be the $1$-form on $M$ uniquely defined by the conditions
$$
\imath_{U_\alpha}  \hat U_\alpha=1 \quad \text{ \rm and } \quad \imath_X \hat U_\alpha =
 \imath_V \hat U_\alpha =0\,.
$$
\begin{lemma} 
\label{lemma:mixingformula1}
For any continuous function $f$ on $M$, for all $\sigma>0$ and $(x,t)\in M\times\R$,
$$
\int_0^\sigma  (f\circ h^\alpha_t \circ \phi^X_s)(x) ds  = -\frac{1}{t} \int_{\gamma^\sigma_{x,t}}  f \hat U_\alpha   
+ \int_0^\sigma f\circ h^\alpha_t \circ \phi^X_s(x) (  \frac{v_t(x,s)}{t} +1) ds \,.
$$
\end{lemma}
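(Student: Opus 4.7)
The plan is to reduce the claimed identity to a direct computation once the line integral $\int_{\gamma^\sigma_{x,t}} f \hat U_\alpha$ is expanded via the parametrization.

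First, I would note that by the definition of $\hat U_\alpha$ one has $\imath_{U_\alpha}\hat U_\alpha = 1$ and $\imath_X \hat U_\alpha = 0$. Combined with Lemma~\ref{lemma:velocity}, which expresses $d\gamma^\sigma_{x,t}/ds$ as $v_t(x,s)\, U_\alpha(\gamma^\sigma_{x,t}(s)) + X(\gamma^\sigma_{x,t}(s))$, pairing with $\hat U_\alpha$ gives
\[
\hat U_\alpha\!\left(\frac{d\gamma^\sigma_{x,t}}{ds}\right) = v_t(x,s)\,.
\]
Hence, by the very definition of the line integral of a $1$-form along a parametrized path,
\[
\int_{\gamma^\sigma_{x,t}} f\, \hat U_\alpha = \int_0^\sigma f\circ h^\alpha_t \circ \phi^X_s(x)\, v_t(x,s)\, ds\,.
\]

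Next, I would simply rearrange. Dividing by $t$ and changing sign gives
\[
-\frac{1}{t}\int_{\gamma^\sigma_{x,t}} f\,\hat U_\alpha = -\int_0^\sigma f\circ h^\alpha_t \circ \phi^X_s(x)\,\frac{v_t(x,s)}{t}\, ds\,.
\]
Adding the second integral on the right-hand side of the claimed identity cancels the $v_t/t$ term and leaves precisely $\int_0^\sigma f\circ h^\alpha_t \circ \phi^X_s(x)\, ds$, which is the left-hand side.

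There is no real obstacle here: the lemma is a tautological rewriting designed to separate the ``average'' piece $\tfrac{1}{t}\int_{\gamma^\sigma_{x,t}} f\,\hat U_\alpha$, to which one can later apply the quantitative equidistribution of horocycle-like arcs along the pushed-forward geodesic arc, from an error piece controlled by Lemma~\ref{lemma:velocitybound} (since $v_t(x,s)/t + 1$ is uniformly small as $t\to\infty$). The only point to be careful about is the sign convention in the definition of $v_t$: because $(X\alpha/\alpha-1)$ has mean $-1$ with respect to $\text{\rm vol}_\alpha$, one indeed has $v_t(x,s)/t \to -1$, which is why the identity features $v_t/t + 1$ rather than $v_t/t - 1$.
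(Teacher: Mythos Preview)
Your proposal is correct and follows exactly the paper's own argument: compute $\int_{\gamma^\sigma_{x,t}} f\,\hat U_\alpha = \int_0^\sigma f\circ h^\alpha_t\circ\phi^X_s(x)\,v_t(x,s)\,ds$ from Lemma~\ref{lemma:velocity} and the definition of $\hat U_\alpha$, then rearrange. The paper states the same identity and then writes ``the formula follows immediately''; your additional algebraic unpacking and contextual remarks are fine but not needed.
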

\begin{proof} By Lemma~\ref{lemma:velocity} and by the above definitions,
$$
\int_{\gamma^\sigma_{x,t}}  f \hat U_\alpha  = \int_0^\sigma   f \circ h^\alpha_t \circ \phi^X_s (x) v_t(x,s)  ds\,.
$$
The formula follows immediately.
\end{proof}

The above formula can be refined by integration by parts:
\begin{lemma} 
\label{lemma:mixingformula2}
For any continuous function $f$ on $M$, for all $\sigma>0$ and $(x,t)\in M\times\R$,
$$
\begin{aligned}
\int_0^\sigma  f\circ h^\alpha_t \circ \phi^X_s(x) ds  &= -\frac{1}{t} \int_{\gamma^\sigma_{x,t}}  f \hat U_\alpha   
+   (  \frac{v_t(x,\sigma)}{t} +1)  \int_0^\sigma  f\circ h^\alpha_t \circ \phi^X_s(x) ds  \\
&- \frac{1}{t} \int_0^\sigma \frac{\partial v_t}{\partial s} (x,S) 
[\int_0^S f\circ h^\alpha_t \circ \phi^X_s(x) ds]  dS \,.
\end{aligned}
$$
\end{lemma}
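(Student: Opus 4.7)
The plan is to recognize that Lemma \ref{lemma:mixingformula2} is obtained from Lemma \ref{lemma:mixingformula1} by a single integration by parts applied to its second summand. The geometric term $-\frac{1}{t}\int_{\gamma^\sigma_{x,t}} f \hat U_\alpha$ is left untouched, and the entire task is to reshape
$$
\int_0^\sigma f\circ h^\alpha_t \circ \phi^X_s(x) \left(\frac{v_t(x,s)}{t}+1\right)\,ds \,.
$$

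Concretely, I will set $F(S) := \int_0^S f\circ h^\alpha_t\circ \phi^X_s(x)\,ds$ and $G(S) := v_t(x,S)/t + 1$, so the summand above becomes $\int_0^\sigma F'(S)G(S)\,dS$. The function $F$ is absolutely continuous with $F(0)=0$ because $f$ is continuous, while $G$ is $C^1$ in $S$: indeed, by (\ref{eq:velocityfunction}), $v_t(x,S)$ is the integral over $\tau\in[0,t]$ of $(X\alpha/\alpha - 1)\circ h^\alpha_\tau \circ \phi^X_S(x)$, a composition smooth in $S$ under the standing hypothesis on $\alpha$. Standard integration by parts then gives
$$
\int_0^\sigma F'(S)G(S)\,dS = F(\sigma)G(\sigma) - \frac{1}{t}\int_0^\sigma \frac{\partial v_t}{\partial s}(x,S)\,F(S)\,dS\,,
$$
which matches exactly the last two terms on the right-hand side of Lemma \ref{lemma:mixingformula2} once the definitions of $F$ and $G$ are expanded. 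Substituting back into the identity of Lemma \ref{lemma:mixingformula1} completes the proof.

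I do not anticipate any genuine obstacle here: the argument is a one-line integration by parts, and the only mild verification is the regularity of $v_t(x,\cdot)$ in its second variable, which is immediate from its definition as an integral along a smooth geodesic curve of a function that is smooth whenever $\alpha$ is. Lemma \ref{lemma:mixingformula2} is a preparatory identity that will be combined later with the velocity estimate of Lemma \ref{lemma:velocitybound} and quantitative equidistribution for horocycle-like arcs to extract the decay of correlations in Theorem \ref{thm:mixingbound2}.
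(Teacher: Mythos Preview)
Your proposal is correct and matches exactly the paper's approach: the paper simply states that Lemma~\ref{lemma:mixingformula2} follows from Lemma~\ref{lemma:mixingformula1} ``by integration by parts,'' and you have supplied precisely that computation with $F(S)=\int_0^S f\circ h^\alpha_t\circ\phi^X_s(x)\,ds$ and $G(S)=v_t(x,S)/t+1$.
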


\begin{lemma}
\label{lemma:velocityderivativeformula}
For all $s>0$ and all $(x,t)\in M\times \R^+$, we have:
$$
\begin{aligned}
\frac{\partial v_t}{\partial s}(x,s)&=v_t(x,s) (\frac{X\alpha}{\alpha}) \circ h^\alpha_t \circ \phi^X_s(x)  \\
&-  \int_0^t    [(\frac{X\alpha}{\alpha}-1) (\frac{X\alpha}{\alpha}) - X (\frac{X\alpha}{\alpha})] 
\circ h^\alpha_\tau \circ \phi^X_s(x) d\tau\,.
 \end{aligned}
$$
\end{lemma}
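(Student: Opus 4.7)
The plan is to exploit that the parameter $s$ enters the definition of $v_t(x,s)$ only through the base point $y := \phi^X_s(x)$, so that $\partial/\partial s$ coincides with the Lie derivative $X$ applied to the function $y \mapsto v_t(y,0)$ and evaluated at $y = \phi^X_s(x)$. Setting $F := \frac{X\alpha}{\alpha} - 1$, so that $v_t(y,0) = \int_0^t F \circ h^\alpha_\tau(y)\, d\tau$, I would first pass $X$ under the integral. By the chain rule together with the formula for $Dh^\alpha_\tau(X)$ already established in Lemma~\ref{lemma:tangent_flow}, one obtains
\[
X(F\circ h^\alpha_\tau)(y) = dF|_{h^\alpha_\tau(y)}\bigl((Dh^\alpha_\tau)_y X_y\bigr) = v_\tau(x,s)\,(U_\alpha F)\circ h^\alpha_\tau(y) + (XF)\circ h^\alpha_\tau(y),
\]
and integrating in $\tau$ yields
\[
\frac{\partial v_t}{\partial s}(x,s) = \int_0^t v_\tau(x,s)\,(U_\alpha F)\circ h^\alpha_\tau\circ\phi^X_s(x)\, d\tau + \int_0^t (XF)\circ h^\alpha_\tau\circ\phi^X_s(x)\, d\tau.
\]

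Next I would integrate by parts in $\tau$ in the first integral, relying on the two elementary identities $(U_\alpha F)\circ h^\alpha_\tau = \frac{d}{d\tau}\bigl(F\circ h^\alpha_\tau\bigr)$ (since $U_\alpha$ generates $h^\alpha_\tau$) and $\frac{d v_\tau}{d\tau}(x,s) = F\circ h^\alpha_\tau\circ\phi^X_s(x)$. Since $v_0(x,s)=0$, the boundary contribution reduces to $v_t(x,s)\, F\circ h^\alpha_t\circ\phi^X_s(x)$ and the interior term becomes $-\int_0^t \bigl(F\circ h^\alpha_\tau\circ\phi^X_s(x)\bigr)^2\, d\tau$. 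Substituting $F = \frac{X\alpha}{\alpha} - 1$ in the boundary term yields $v_t(x,s)\bigl(\frac{X\alpha}{\alpha}\bigr)\circ h^\alpha_t\circ\phi^X_s(x) - v_t(x,s)$.

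The last step is to absorb the stray term $-v_t(x,s) = -\int_0^t F\circ h^\alpha_\tau\circ\phi^X_s(x)\, d\tau$ into the remaining integrals, so that all three combine into $-\int_0^t\bigl[F(F+1) - XF\bigr]\circ h^\alpha_\tau\circ\phi^X_s(x)\, d\tau$. Rewriting $F(F+1) = \bigl(\frac{X\alpha}{\alpha}-1\bigr)\bigl(\frac{X\alpha}{\alpha}\bigr)$ and $XF = X\bigl(\frac{X\alpha}{\alpha}\bigr)$ then reproduces exactly the formula in the statement. I do not anticipate any serious obstacle here: the argument is a direct computation resting only on the already-established expression for $Dh^\alpha_\tau(X)$ together with a single integration by parts. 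The one piece of delicate bookkeeping is the $-v_t(x,s)$ correction generated by the $-1$ in the definition of $F$, which must be folded back into the integrand to produce the precise asserted form.
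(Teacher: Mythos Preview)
Your proposal is correct and follows essentially the same route as the paper: differentiate under the integral using the formula for $Dh^\alpha_\tau(X)$ from Lemma~\ref{lemma:tangent_flow}, then integrate by parts in $\tau$. The only cosmetic difference is that the paper performs the integration by parts directly with $\frac{X\alpha}{\alpha}$ (so the boundary term is already $v_t(x,s)\,(\frac{X\alpha}{\alpha})\circ h^\alpha_t\circ\phi^X_s(x)$ and the interior term is already $-\int_0^t (\frac{X\alpha}{\alpha}-1)(\frac{X\alpha}{\alpha})\,d\tau$), whereas you carry $F=\frac{X\alpha}{\alpha}-1$ through and need the final absorption of the $-v_t(x,s)$ correction; this extra line of algebra is of course equivalent.
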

\begin{proof} By Lemma \ref{lemma:tangent_flow} and by formula~(\ref{eq:velocityfunction})
 it follows that
$$
\frac{\partial v_t}{\partial s}(x,s)= \int_0^t   [v_\tau(x,s) U_\alpha + X] 
(\frac{X\alpha}{\alpha}) \circ h^\alpha_\tau \circ \phi^X_s(x) d\tau\,.
$$
By integration by parts we also have
$$
\begin{aligned}
&\int_0^t   v_\tau(x,s) U_\alpha(\frac{X\alpha}{\alpha}) \circ h^\alpha_\tau \circ \phi^X_s(x) d\tau 
= \int_0^t   v_\tau(x,s)  \frac{d}{d\tau} (\frac{X\alpha}{\alpha}) \circ h^\alpha_\tau \circ \phi^X_s(x) d\tau  \\
&= v_t(x,s) (\frac{X\alpha}{\alpha}) \circ h^\alpha_t \circ \phi^X_s(x)  -  
 \int_0^t    \frac{dv_\tau}{d\tau}(x,s) (\frac{X\alpha}{\alpha}) \circ h^\alpha_\tau \circ \phi^X_s(x) d\tau
 \\ &= v_t(x,s) (\frac{X\alpha}{\alpha}) \circ h^\alpha_t \circ \phi^X_s(x)  -  
 \int_0^t    [(\frac{X\alpha}{\alpha}-1) (\frac{X\alpha}{\alpha})] \circ h^\alpha_\tau \circ \phi^X_s(x) d\tau\,,
 \end{aligned}
$$
as claimed in the statement of the lemma.
\end{proof}
By the Sobolev embedding theorem, we have the following estimate:
\begin{lemma} 
\label{lemma:velocityderivativebound}
For any $r>7/2$, there exists a constant $C_r(\alpha)>0$  such that the following holds. Let 
$\alpha \in W^r(M)$. For all $(x,s)\in M\times [0,\sigma]$ and for all $t > 0$,
$$
\vert \frac{\partial v_t}{\partial s}(x,s) \vert  \leq  C_r (\alpha) \, t \,.
$$
\end{lemma}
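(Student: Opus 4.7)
The plan is to start from the explicit formula for $\partial v_t/\partial s$ given by Lemma~\ref{lemma:velocityderivativeformula} and bound each of the two terms by $C_r(\alpha)\, t$ using only the boundedness of $\alpha$ and its first two $X$-derivatives, together with the strict positivity of $\alpha$.

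First, note that since $M$ is a compact $3$-manifold, the Sobolev embedding $W^r(M) \hookrightarrow C^k(M)$ holds whenever $r > k + 3/2$. For $r > 7/2$ we therefore have $\alpha \in C^2(M)$, so $X\alpha$ and $X^2\alpha$ are continuous functions on $M$ with $\|X\alpha\|_\infty, \|X^2\alpha\|_\infty \leq C\|\alpha\|_r$. Combined with the fact that $\alpha$ is strictly positive and continuous on the compact manifold $M$, hence bounded below by some $\alpha_{\min}(\alpha) > 0$, the functions
$$
\frac{X\alpha}{\alpha},\qquad \left(\frac{X\alpha}{\alpha}-1\right)\frac{X\alpha}{\alpha},\qquad X\!\left(\frac{X\alpha}{\alpha}\right) = \frac{X^2\alpha}{\alpha} - \frac{(X\alpha)^2}{\alpha^2}
$$
all belong to $C^0(M)$ with sup-norms bounded by a constant $C_r(\alpha) > 0$ depending only on $\|\alpha\|_r$ and $\alpha_{\min}(\alpha)$.

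Next, I would bound the factor $v_t(x,s)$ in the first term of the formula in Lemma~\ref{lemma:velocityderivativeformula}. Rather than invoking Lemma~\ref{lemma:velocitybound} (which only gives the sharper rate under the stronger regularity $r > 4$), the trivial bound suffices here: directly from the definition~(\ref{eq:velocityfunction}),
$$
|v_t(x,s)| \leq \int_0^t \left|\left(\tfrac{X\alpha}{\alpha}-1\right)\circ h^\alpha_\tau \circ \phi^X_s(x)\right| d\tau \leq \left\|\tfrac{X\alpha}{\alpha}-1\right\|_\infty t \leq C_r(\alpha)\, t.
$$
Combining this with the uniform bound on $X\alpha/\alpha$ controls the first term in the formula for $\partial v_t/\partial s$ by $C_r(\alpha)\, t$.

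For the second term, I would just pull the sup-norm out of the integral: the integrand is bounded pointwise by $\|(X\alpha/\alpha-1)(X\alpha/\alpha) - X(X\alpha/\alpha)\|_\infty \leq C_r(\alpha)$, hence the integral over $[0,t]$ is bounded by $C_r(\alpha)\, t$. Summing the two contributions yields the desired inequality. The main (minor) obstacle is simply keeping track of the dependence on $\alpha$ through its Sobolev norm and its minimum, but no delicate cancellation or equidistribution input is needed—the bound is essentially the trivial one made possible by the fact that all the ingredients appearing in Lemma~\ref{lemma:velocityderivativeformula} are, under the regularity hypothesis $r > 7/2$, continuous functions on $M$.
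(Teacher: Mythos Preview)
Your proposal is correct and is precisely the argument the paper intends: the paper's own proof consists only of the phrase ``By the Sobolev embedding theorem,'' and your write-up spells out exactly how Sobolev embedding (for $r>7/2$ on the $3$-manifold $M$) combined with the explicit formula of Lemma~\ref{lemma:velocityderivativeformula} yields the trivial linear bound in $t$.
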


For any $r>7/2$, let $C_r(\alpha)>0$ be the constant of 
Lemma~\ref{lemma:velocityderivativebound} and let
\begin{equation}
\sigma_r(\alpha) := \frac{1}{C_r(\alpha)}  \,>\,0\,.
\end{equation}
By a bootstrap argument we derive the following bound.

\begin{lemma}
\label{lemma:mixingbound}
For any $r>7/2$ and  for any $\sigma \in (0, \sigma_r(\alpha))$, there
exist a time $t_{r,\sigma}(\alpha)>0$ and a constant $C_{r,\sigma}(\alpha)>0$  such that, 
for any continuous function $f$ on $M$, for any $x\in M$ and for all $t> t_{r,\sigma}(\alpha)$,
$$
\sup_{ S\in [0, \sigma]} \vert \int_0^S  f\circ h^\alpha_t \circ \phi^X_s(x) ds \vert
\leq C_{r,\sigma}(\alpha)  \sup_{ S\in [0, \sigma]} \vert \frac{1}{t} \int_{\gamma^S_{x,t}} 
 f \hat U_\alpha \vert  \,.
$$
\end{lemma}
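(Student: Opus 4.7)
The plan is to apply the integral identity of Lemma~\ref{lemma:mixingformula2} not only at time $\sigma$ but at every $S \in [0,\sigma]$, and then to bootstrap the resulting inequality against itself using the bounds on the velocity function and its derivative. Writing $F_t(S) := \int_0^S f \circ h^\alpha_t \circ \phi^X_s(x)\, ds$ and $G_t(S) := \frac{1}{t} \int_{\gamma^S_{x,t}} f \hat U_\alpha$, Lemma~\ref{lemma:mixingformula2} applied on $[0,S]$ reads
$$
F_t(S) = -G_t(S) + \Bigl(\frac{v_t(x,S)}{t}+1\Bigr) F_t(S) - \frac{1}{t}\int_0^S \frac{\partial v_t}{\partial s}(x,S')\, F_t(S')\, dS'.
$$

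Next, I would bound each ``bad'' term. By Lemma~\ref{lemma:velocitybound}, for any $\epsilon>0$ there exists $t_{r,\sigma}(\alpha)>0$ such that for all $t > t_{r,\sigma}(\alpha)$ and all $(x,S) \in M \times [0,\sigma]$ one has $|v_t(x,S)/t + 1| \leq \epsilon$. By Lemma~\ref{lemma:velocityderivativebound}, $|\partial_s v_t(x,S')| \leq C_r(\alpha)\, t$. Inserting these into the displayed identity and taking absolute values yields, for every $S \in [0,\sigma]$,
$$
|F_t(S)| \leq |G_t(S)| + \epsilon |F_t(S)| + C_r(\alpha) \int_0^S |F_t(S')|\, dS'.
$$

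The bootstrap step is to set $\Phi(S) := \sup_{S'' \in [0,S]} |F_t(S'')|$ and $N := \sup_{S \in [0,\sigma]} |G_t(S)|$. For any $S_1 \leq \sigma$ the above inequality, together with monotonicity of $\Phi$, gives
$$
|F_t(S_1)| \leq N + \epsilon\, \Phi(\sigma) + C_r(\alpha)\, \sigma\, \Phi(\sigma),
$$
so taking the supremum in $S_1$ yields $\Phi(\sigma) \leq N + (\epsilon + C_r(\alpha)\sigma)\, \Phi(\sigma)$. Since $\sigma < \sigma_r(\alpha) = 1/C_r(\alpha)$, the quantity $1 - C_r(\alpha)\sigma$ is strictly positive, so I can fix $\epsilon \in (0, 1 - C_r(\alpha)\sigma)$ at the outset and obtain
$$
\Phi(\sigma) \leq \frac{1}{1 - \epsilon - C_r(\alpha)\sigma}\, N,
$$
which is precisely the claimed inequality with $C_{r,\sigma}(\alpha) := (1 - \epsilon - C_r(\alpha)\sigma)^{-1}$.

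The main subtlety is the bootstrap absorption: the same quantity $F_t$ appears on both sides of the post--integration-by-parts identity, and one must use the smallness of $v_t/t + 1$ for large $t$ (from the quantitative equidistribution of the time-change applied to the function $X\alpha/\alpha - 1$) together with the restriction $\sigma < 1/C_r(\alpha)$ to make the coefficient strictly less than one. Once this is arranged, the monotonization via $\Phi$ is elementary and avoids invoking Gronwall explicitly. Note also that the inequality $\Phi \leq N + \cdots \Phi$ must be established uniformly in $S_1$ before taking the supremum, which is why the bound on $|F_t(S_1)|$ must only involve $\Phi(\sigma)$ (not $\Phi(S_1)$) on the right.
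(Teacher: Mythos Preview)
Your proof is correct and follows essentially the same approach as the paper: apply the identity of Lemma~\ref{lemma:mixingformula2} on each subinterval $[0,S]$, bound the coefficient of $F_t$ using Lemmas~\ref{lemma:velocitybound} and~\ref{lemma:velocityderivativebound}, and absorb the supremum into itself since $\sigma < 1/C_r(\alpha)$ makes the total coefficient strictly less than $1$ for large $t$. Your write-up is in fact more careful than the paper's in making the uniformity over $S\in[0,\sigma]$ and the choice of $\epsilon$ explicit.
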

\begin{proof}
By Lemmas~\ref{lemma:mixingformula2} and~\ref{lemma:velocityderivativebound}, it follows
that 
$$
\begin{aligned}
&\sup_{ S\in [0, \sigma]} \vert \int_0^S  f\circ h^\alpha_t \circ \phi^X_s(x) ds \vert \leq 
\sup_{ S\in [0, \sigma]} \vert \frac{1}{t} \int_{\gamma^S_{x,t}}  f \hat U_\alpha \vert  \\
&+[ \max_{x\in M} \vert \frac{v_t(x,\sigma)}{t} +1\vert + C_r(\alpha) \sigma] 
\sup_{ S\in [0, \sigma]} \vert \int_0^S  f\circ h^\alpha_t \circ \phi^X_s(x) ds \vert \,.
\end{aligned}
$$
By definition $C_r(\alpha) \sigma<1$ for any $\sigma \in (0, \sigma_r(\alpha))$, hence
by Lemma~\ref{lemma:velocitybound} there exists  $t_{r,\sigma}(\alpha)>0$ such that,
for all $t> t_{r,\sigma}(\alpha)$, 
$$
\max_{x\in M} \vert \frac{v_t(x,\sigma)}{t} +1\vert  + C_r(\alpha) \sigma <1 \,.
$$
Thus we conclude that the statement holds if  we set
$$
 C_{r,\sigma}(\alpha) := [1-\vert \max_{x\in M}\frac{v_t(x,\sigma)}{t} +1\vert- C_r(\alpha) \sigma]^{-1}\,.
$$
\end{proof}

We recall that  the path $\gamma^\sigma_{x,t}$ is contained in a single leaf of the weak-stable
foliation of the geodesic flow. By Lemma~\ref{lemma:velocity},  its length is estimated below.
\begin{lemma} 
\label{lemma:length} 
For all $r>5/2$ and for all $\alpha \in W^r(M)$, there exists a  constant $C'_r(\alpha)>0$ such that, 
for all $\sigma >0$ and for all $(x,t)\in M\times \R^+$,  
$$
 \int_{\gamma^\sigma_{x,t}}  \vert \hat X \vert  \leq C'_r (\alpha)  \sigma \quad \text{ and } \quad
 \int_{\gamma^\sigma_{x,t}}  \vert \hat U\vert   \leq  C'_r (\alpha) \sigma t\,. 
$$
\end{lemma}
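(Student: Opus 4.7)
The plan is to read off the two integrals directly from Lemma~\ref{lemma:velocity}, which expresses the velocity of $\gamma^\sigma_{x,t}$ in the basis $\{U_\alpha, V, X\}$, and then to bound the coefficients using the explicit integral formula~(\ref{eq:velocityfunction}) for $v_t$ together with Sobolev embedding. Throughout I will assume the natural convention that $\hat X$ is the $1$-form dual to $X$ (i.e.\ $\imath_X \hat X = 1$ and $\imath_{U_\alpha} \hat X = \imath_V \hat X = 0$) and $\hat U$ is the $1$-form dual to $U$, both defined in analogy with $\hat U_\alpha$.

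First I would contract the dual forms with the velocity. By Lemma~\ref{lemma:velocity},
\[
\frac{d\gamma^\sigma_{x,t}}{ds}(s) = v_t(x,s)\, U_\alpha(\gamma^\sigma_{x,t}(s)) + X(\gamma^\sigma_{x,t}(s))\,.
\]
Since $\hat X$ kills $U_\alpha$ and evaluates to $1$ on $X$, we immediately obtain $\hat X(\dot \gamma^\sigma_{x,t}(s)) \equiv 1$, whence
\[
\int_{\gamma^\sigma_{x,t}} |\hat X| = \int_0^\sigma 1 \, ds = \sigma\,.
\]
For the second form, using $U_\alpha = U/\alpha$ and $\hat U(U) = 1$, $\hat U(X) = \hat U(V) = 0$, one gets $\hat U(\dot \gamma^\sigma_{x,t}(s)) = v_t(x,s)/\alpha(\gamma^\sigma_{x,t}(s))$.

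The second step is to bound $|v_t|$ pointwise. Directly from the definition~(\ref{eq:velocityfunction}),
\[
|v_t(x,s)| \leq t \cdot \left\| \frac{X\alpha}{\alpha} - 1 \right\|_{C^0(M)}\,.
\]
Since $\dim M = 3$, the Sobolev embedding $W^{r-1}(M) \hookrightarrow C^0(M)$ holds for $r - 1 > 3/2$, i.e.\ $r > 5/2$; thus $X\alpha$ is continuous and bounded by a constant multiple of $\|\alpha\|_r$. Because $\alpha$ is a strictly positive continuous function on the compact manifold $M$, it admits a uniform lower bound $\alpha_{\min} > 0$, so both $1/\alpha$ and $X\alpha/\alpha$ are bounded by constants depending on $\alpha$. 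This yields a constant $C''_r(\alpha) > 0$ with $|v_t(x,s)| \leq C''_r(\alpha)\, t$ for all $(x,s) \in M \times [0,\sigma]$ and all $t > 0$. Combining,
\[
\int_{\gamma^\sigma_{x,t}} |\hat U| = \int_0^\sigma \frac{|v_t(x,s)|}{\alpha(\gamma^\sigma_{x,t}(s))} ds \leq \frac{C''_r(\alpha)}{\alpha_{\min}} \cdot \sigma t\,.
\]
Setting $C'_r(\alpha) := \max\bigl(1,\, C''_r(\alpha)/\alpha_{\min}\bigr)$ gives both bounds simultaneously.

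There is no real obstacle in this argument; it is essentially a bookkeeping computation. The only point requiring minor care is the justification of the $C^0$ bound on $X\alpha/\alpha$, which is why the regularity threshold $r > 5/2$ appears — it is precisely the Sobolev threshold needed for $X\alpha \in C^0(M)$ on the $3$-dimensional manifold $M$.
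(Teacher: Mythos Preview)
Your proof is correct and follows the same structure as the paper: compute the two line integrals from the velocity formula of Lemma~\ref{lemma:velocity}, obtaining $\int_{\gamma^\sigma_{x,t}}|\hat X| = \sigma$ and $\int_{\gamma^\sigma_{x,t}}|\hat U| = \int_0^\sigma |v_t(x,s)|/\alpha(\gamma^\sigma_{x,t}(s))\,ds$, and then use a linear-in-$t$ bound on $|v_t|$ together with the positive lower bound on $\alpha$.

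The one difference is in how the linear bound on $|v_t|$ is obtained. The paper cites Lemma~\ref{lemma:velocitybound}, which gives the much sharper estimate $|v_t/t + 1| \lesssim t^{-(1-\nu_0)/2}$ coming from quantitative equidistribution; you instead read off the trivial bound $|v_t(x,s)| \leq t\,\|X\alpha/\alpha - 1\|_{C^0(M)}$ directly from the defining formula~(\ref{eq:velocityfunction}). Your route is more elementary and, in fact, is the one that actually matches the regularity hypothesis of the lemma: Lemma~\ref{lemma:velocitybound} is stated for $r>4$, whereas your Sobolev-embedding argument explains precisely why $r>5/2$ suffices (it is exactly the threshold for $X\alpha \in W^{r-1}(M) \hookrightarrow C^0(M)$ on the $3$-manifold $M$). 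The paper's citation is thus slightly wasteful on regularity, while your argument is sharp.
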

\begin{proof}  By Lemma~\ref{lemma:velocity} we have
$$
 \int_{\gamma^\sigma_{x,t}}  \vert \hat X \vert =  
\int_0^\sigma  ds    \quad \text{ and } \quad
 \int_{\gamma^\sigma_{x,t}}  \vert \hat U\vert  =  
 \int_0^\sigma   \frac{\vert v_t(x,s)\vert} {(\alpha \circ h^\alpha_t \circ \phi^X_s)(x)}  ds \,,
$$
hence the first estimate  in the statement is immediate, while the second estimate 
follows from the uniform linear bound on $\vert v_t(x,s)\vert$ established in  
Lemma~\ref{lemma:velocitybound}.
\end{proof}

From the results of \cite{BuFo} on the asymptotics of ergodic averages for the horocycle flow
(see in particular \cite{BuFo}, Theorem 1.3) we then derive the following bound:

\begin{lemma}
\label{lemma:BFbound}
Let $r > 11/2$. For any $\alpha \in W^r(M)$ and for any $\sigma>0$, there
exists a constant $C_{r,\sigma}(\alpha)>0$ such that the following holds. For any zero-average function 
$f\in W^r(M)\cap L^2_0(M, \text{\rm vol}_\alpha)$,  for all $x\in M$, for all $S\in (0,\sigma]$,  and for all 
$t>0$,
$$
\vert \int_{\gamma^S_{x,t}}  f \hat U_\alpha \vert \leq C_{r,\sigma} (\alpha) \Vert f\Vert_r \, 
 (S t)^{\frac{1+\nu_0}{2}} [1+ \log^+(St)]^{\epsilon_0}  \,.
$$
\end{lemma}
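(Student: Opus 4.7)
My plan is to recognize $\gamma^S_{x,t}$ as a horocycle-like arc for the classical horocycle flow with transverse $X$-excursion bounded by $\sigma$ and horocycle length of order $St$, to rewrite the integral $\int_{\gamma^S_{x,t}} f \hat U_\alpha$ as the integral of the zero-$\text{\rm vol}$-average smooth function $\alpha f$ along this arc against the horocycle dual 1-form, and finally to apply Theorem~1.3 of \cite{BuFo} for horocycle-like arcs.

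For the first reduction, let $\hat U$ denote the 1-form dual to $U$ and annihilating $X$ and $V$. Since $\hat U(U_\alpha) = 1/\alpha$, one has $\hat U_\alpha = \alpha \hat U$, and therefore
\[
\int_{\gamma^S_{x,t}} f \hat U_\alpha \;=\; \int_{\gamma^S_{x,t}} (\alpha f)\, \hat U.
\]
The hypothesis $f \in L^2_0(M, \text{\rm vol}_\alpha)$ is equivalent to $\int_M (\alpha f)\, \text{\rm vol} = 0$, so $\alpha f$ has zero standard-volume average; moreover, by the Banach algebra property of $W^r(M)$ for $r > 3/2$, one has $\|\alpha f\|_r \leq C\|\alpha\|_r \|f\|_r$.

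To verify that $\gamma^S_{x,t}$ is an admissible horocycle-like arc, I combine the identity $h^\alpha_t(y) = h^U_{T(y,t)}(y)$ with the $SL(2,\R)$ commutation $h^U_r \phi^X_s = \phi^X_s \circ h^U_{re^s}$ (from $[X,U]=U$) to write
\[
\gamma^S_{x,t}(s) \;=\; \phi^X_s \circ h^U_{w(s)}(x), \qquad w(s) := e^s\, T(\phi^X_s(x), t).
\]
This exhibits $\gamma^S_{x,t}$ as a graph over the horocycle direction inside a single weak-stable leaf of the geodesic flow, with $X$-excursion at most $S \leq \sigma$. The horocycle length is then controlled by
\[
\left| \int_{\gamma^S_{x,t}} \hat U \right| \;=\; \left| \int_0^S \frac{v_t(x,s)}{\alpha(\gamma^S_{x,t}(s))}\, ds \right| \;\leq\; C_\sigma(\alpha)\, S\, t,
\]
using Lemma~\ref{lemma:velocity} to evaluate $\hat U$ on the velocity $v_t U_\alpha + X$ and Lemma~\ref{lemma:velocitybound} to bound $|v_t|/t$ uniformly on $M \times [0,\sigma]$.

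Finally, applying Theorem~1.3 of \cite{BuFo} for horocycle-like arcs to the function $\alpha f$ along $\gamma^S_{x,t}$ will give
\[
\left| \int_{\gamma^S_{x,t}} (\alpha f)\, \hat U \right| \;\leq\; C_r \|\alpha f\|_r\, (C_\sigma S t)^{(1+\nu_0)/2}\, \bigl(1 + \log^+(C_\sigma S t)\bigr)^{\epsilon_0},
\]
from which, after absorbing constants into $C_{r,\sigma}(\alpha)$, the claimed bound follows. The main technical obstacle will be matching the precise hypotheses of \cite{BuFo}, Theorem~1.3, namely uniform control on the parametrization $u \mapsto \phi^X_{s(u)} h^U_u(x)$ of the arc (where $s$ is the inverse of $w$) and on a sufficient number of its derivatives; since $s(u)$ is defined implicitly through $\alpha$ and the cocycle $T(\cdot, t)$, this is where the elevated regularity threshold $r > 11/2$ is expected to enter, to absorb differentiations of $\alpha$ along both the horocycle and the geodesic directions.
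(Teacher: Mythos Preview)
Your proposal is essentially the paper's own argument: rewrite $\int_{\gamma^S_{x,t}} f\hat U_\alpha = \int_{\gamma^S_{x,t}} (\alpha f)\hat U$, note that $\alpha f$ has zero $\text{vol}$-average, control the $X$- and $U$-lengths of the arc (the paper cites Lemma~\ref{lemma:length} for this; your derivation via Lemma~\ref{lemma:velocity} and Lemma~\ref{lemma:velocitybound} is equivalent), and feed into \cite{BuFo}, Theorem~1.3. Your explicit reparametrization $\gamma^S_{x,t}(s)=\phi^X_s\circ h^U_{w(s)}(x)$ is a welcome detail that the paper leaves implicit.

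One point the paper handles that you gloss over: the paper splits $\alpha f$ according to the irreducible decomposition and treats the discrete-series components separately, invoking the \emph{methods} of \cite{BuFo} (rather than Theorem~1.3 verbatim) together with the fact that $\gamma^S_{x,t}$ lies in a weak-stable leaf, to obtain a bound of order $(1+\int|\hat X|)\log(1+\int|\hat U|)$. This is then dominated by $(St)^{(1+\nu_0)/2}$ via Lemma~\ref{lemma:length}. Your plan applies Theorem~1.3 uniformly, which may not be literally correct for discrete-series components; you should either check that the version of Theorem~1.3 you have in mind covers all series, or insert the same case split.
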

\begin{proof} Precise bounds on the integrals
$$
\int_{\gamma^S_{x,t}}  f \hat U_\alpha =  \int_{\gamma^S_{x,t}}  (\alpha f )\,\hat U  \,.
$$
can be derived from~\cite{BuFo}, Theorem 1.3, if the function $\alpha f \in W^r(M)$ is supported on irreducible unitary components of the the principal and complementary series. 

By Lemma~\ref{lemma:length}, it follows from \cite{BuFo}, Theorem 1.3,  that there exists a constant 
$C_{r,\sigma}(\alpha)>0$ such that,  for all $S\in (0,\sigma]$ and for all $t> 0$,
$$
\vert \int_{\gamma^S_{x,t}}  f \hat U_\alpha \vert  \leq C_{r,\sigma}(\alpha) \Vert f \Vert_r
 (S t)^{\frac{1+\nu_0}{2}} [1+\log ^+(S t)]^{\epsilon_0}\,.
$$
In case $\alpha f\in W^r(M)$ is supported on irreducible unitary components of the discrete series, 
since the path $\gamma^\sigma_{x,t}$ is contained in a leaf of the weak stable foliation of the geodesic flow (tangent to the integrable distribution $\{X,U\}$) it follows from the methods of \cite{BuFo} that 
$$
\vert \int_{\gamma^S_{x,t}}  f \hat U_\alpha \vert \leq C_{r,\sigma} ( \alpha) \Vert f\Vert_r 
(1+  \int_{\gamma^S_{x,t}}  \vert \hat X \vert )  \log ( 1 +  \int_{\gamma^S_{x,t}}  \vert \hat U\vert  )\,.
$$
By the above formula and by Lemma~\ref{lemma:length} the argument 
is completed.
\end{proof}

\begin{lemma} 
\label{lemma:mixingbound1}
Let $r > 11/2$. For any $\alpha \in W^r(M)$ and  $\sigma \in (0, \sigma_r(\alpha))$, there
exists a constant $C_{r,\sigma}(\alpha)>0$ such that the following holds. For any zero-average function 
$f\in W^r(M)\cap L^2_0(M, \text{\rm vol}_\alpha)$,  for all $x\in M$, for all $S \in (0,\sigma]$,  and for all $t>t_{r,\sigma}(\alpha)$,
$$
\vert \int_0^S (f\circ h^\alpha_t \circ \phi^X_s)(x) ds \vert \leq C_{r,\sigma} (\alpha) \Vert f\Vert_r \, 
 (S t)^{\frac{1+\nu_0}{2}} [1+ \log^+(St)]^{\epsilon_0} / t \,.
$$
\end{lemma}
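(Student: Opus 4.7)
The plan is to combine the bootstrap bound of Lemma~\ref{lemma:mixingbound}, which controls the ergodic integral by the line integral of $f \hat U_\alpha / t$ along $\gamma^S_{x,t}$, with the sharp bound on this line integral provided by Lemma~\ref{lemma:BFbound}. By the Sobolev embedding $W^r(M) \hookrightarrow C^0(M)$, valid since $r > 11/2 > 3/2 = \dim(M)/2$, the function $f$ is continuous, so the hypothesis of Lemma~\ref{lemma:mixingbound} is satisfied.

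First I would apply Lemma~\ref{lemma:mixingbound} with the variable $S \in (0,\sigma]$ playing the role of the parameter $\sigma$, which is legitimate because $S \leq \sigma < \sigma_r(\alpha)$. This yields, for $t > t_{r,S}(\alpha)$,
\[
\sup_{S' \in [0,S]} \left| \int_0^{S'} f\circ h^\alpha_t \circ \phi^X_s(x)\, ds \right| \,\leq\, C_{r,S}(\alpha) \sup_{S' \in [0,S]} \left| \frac{1}{t} \int_{\gamma^{S'}_{x,t}} f \hat U_\alpha \right|.
\]
Inspection of the explicit formula $C_{r,\sigma}(\alpha) = [1 - \max_x |v_t(x,\sigma)/t+1| - C_r(\alpha)\sigma]^{-1}$ in the proof of Lemma~\ref{lemma:mixingbound} shows that both $C_{r,S}(\alpha)$ and $t_{r,S}(\alpha)$ are non-decreasing in $S$: the term $C_r(\alpha)S$ grows with $S$, while the $v_t$-contribution is controlled uniformly in $s$ by Lemma~\ref{lemma:velocitybound}. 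Hence the bound remains valid with $C_{r,S}(\alpha)$ and $t_{r,S}(\alpha)$ replaced by $C_{r,\sigma}(\alpha)$ and $t_{r,\sigma}(\alpha)$ respectively, justifying the hypothesis $t > t_{r,\sigma}(\alpha)$ of the present statement.

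Next I would apply Lemma~\ref{lemma:BFbound} to each path $\gamma^{S'}_{x,t}$ with $S' \in (0,S]$. Since the upper bound $\|f\|_r (S't)^{(1+\nu_0)/2}[1+\log^+(S't)]^{\epsilon_0}$ is monotone non-decreasing in $S'$, the supremum over $S' \in [0,S]$ is attained at $S' = S$. Chaining the two estimates, the single-point value on the left side of the lemma is dominated by the supremum on $[0,S]$, and the factor $1/t$ from the bootstrap produces exactly the claimed inequality after absorbing numerical constants into a new $C_{r,\sigma}(\alpha)$. There is no serious obstacle: the conceptual content is already contained in the two preceding lemmas; the only care required is the book-keeping needed to obtain the tight factor $(St)^{(1+\nu_0)/2}$ rather than the weaker $(\sigma t)^{(1+\nu_0)/2}$, and this is precisely why the bootstrap must be applied on the subinterval $[0,S]$ rather than on all of $[0,\sigma]$.
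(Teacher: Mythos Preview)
Your proposal is correct and follows the same approach as the paper: apply the bootstrap estimate of Lemma~\ref{lemma:mixingbound} and then feed in the path-integral bound of Lemma~\ref{lemma:BFbound}, with Lemma~\ref{lemma:velocitybound} supplying the uniformity in $s$ needed to make the constants independent of $S\in(0,\sigma]$. Your added remark that the bootstrap must be run on $[0,S]$ (with the monotonicity of $t_{r,\sigma}(\alpha)$ and $C_{r,\sigma}(\alpha)$ in $\sigma$ coming from the uniform bound of Lemma~\ref{lemma:velocitybound}) to produce $(St)^{(1+\nu_0)/2}$ rather than $(\sigma t)^{(1+\nu_0)/2}$ makes explicit a point the paper leaves implicit.
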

\begin{proof} By the bounds proved in Lemma~\ref{lemma:velocitybound} and  Lemma~\ref{lemma:mixingbound}, the statement follows immediately from the above Lemma~\ref{lemma:BFbound} .
\end{proof}

By the invariance of the the standard volume under the geodesic flow and by integration by parts 
we derive the following formula.

\begin{lemma} 
\label{lemma:int_parts}
Let $\sigma >0$. For all $f \in L^2(M)$ and for all $g\in L^2(M)$ which belong to the 
maximal domain $\text{ \rm dom}(\mathcal L_X) \subset L^2(M)$ of the densely defined
Lie derivative operator $\mathcal L_X:L^2(M) \to L^2(M)$, for all $t\in \R$,
$$
\begin{aligned}
<f\circ h^\alpha_t, g> &= \frac{1}{\sigma}<  \int _0^\sigma f\circ h^\alpha_t \circ \phi^X_s ds, g \circ \phi^X_\sigma> \\
& - \frac{1}{\sigma}
\int _0^\sigma < \int _0^S f\circ h^\alpha_t \circ \phi^X_s ds, \mathcal L_X g \circ \phi^X_S>dS
\end{aligned}
$$
\end{lemma}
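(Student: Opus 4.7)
The plan is to exploit two facts: first, that the standard volume $\text{\rm vol}$ is invariant under $\{\phi^X_t\}$, so that the inner product $\langle f\circ h^\alpha_t, g\rangle$ in $L^2(M,\text{\rm vol})$ can be rewritten by precomposing both factors with $\phi^X_s$; and second, an elementary integration by parts in the $s$-variable.

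More precisely, I would first note that since $\text{\rm vol}$ is $\phi^X_s$-invariant, for every $s\in\R$
$$
\langle f\circ h^\alpha_t, g\rangle \;=\; \langle f\circ h^\alpha_t \circ \phi^X_s,\, g\circ \phi^X_s\rangle\,.
$$
This right-hand side is therefore independent of $s$, so averaging over $s\in[0,\sigma]$ and applying Fubini gives
$$
\langle f\circ h^\alpha_t, g\rangle \;=\; \frac{1}{\sigma}\int_M \Bigl[\int_0^\sigma (f\circ h^\alpha_t\circ\phi^X_s)(x)\,(g\circ\phi^X_s)(x)\,ds\Bigr] d\text{\rm vol}(x)\,.
$$

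Next, at each $x\in M$ I would integrate by parts in the $s$-variable, taking the antiderivative of $s\mapsto (f\circ h^\alpha_t\circ \phi^X_s)(x)$ vanishing at $0$, namely $F_x(S):=\int_0^S (f\circ h^\alpha_t\circ\phi^X_s)(x)\,ds$, and differentiating $s\mapsto (g\circ \phi^X_s)(x)$, whose derivative is $(\mathcal L_X g)\circ \phi^X_s(x)$ (this is where the hypothesis $g\in\text{\rm dom}(\mathcal L_X)$ enters). Since $F_x(0)=0$, the boundary term reduces to $F_x(\sigma)(g\circ\phi^X_\sigma)(x)$, yielding pointwise
$$
\int_0^\sigma (f\circ h^\alpha_t\circ\phi^X_s)(x)(g\circ\phi^X_s)(x)\,ds = F_x(\sigma)(g\circ\phi^X_\sigma)(x) - \int_0^\sigma F_x(S)(\mathcal L_X g\circ\phi^X_S)(x)\,dS\,.
$$

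A final application of Fubini (in the second term, swapping the $S$-integral with the $\text{\rm vol}$-integral) converts both terms back into $L^2(M,\text{\rm vol})$ inner products, producing exactly the displayed formula. The argument is essentially formal; the only point requiring a bit of care is the justification of the integration by parts and the Fubini swaps, which are immediate once one observes that $s\mapsto g\circ\phi^X_s$ is absolutely continuous in $L^2(M)$ with derivative $(\mathcal L_X g)\circ\phi^X_s$ by the assumption $g\in\text{\rm dom}(\mathcal L_X)$, and that $F_x$ and the various integrands are bounded in a manner compatible with Fubini on the compact manifold $M$ and the compact interval $[0,\sigma]$. There is no genuine obstacle.
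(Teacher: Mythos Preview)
Your proposal is correct and follows exactly the approach indicated by the paper, which does not spell out a proof but simply states that the formula is obtained ``by the invariance of the standard volume under the geodesic flow and by integration by parts.'' Your argument makes precisely these two steps explicit, including the observation that the boundary term at $S=0$ vanishes because the antiderivative $F_x$ is taken to vanish there.
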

It is immediate to derive from Lemma~\ref{lemma:mixingbound1} and from Lemma
\ref{lemma:int_parts} estimates on the decay of correlations for sufficiently smooth functions in Theorem \ref{thm:mixingbound2}.

\begin{proofof}{Theorem}{thm:mixingbound2} 
By Lemma~\ref{lemma:mixingbound1} and Lemma~\ref{lemma:int_parts}, 
there exists a constant $C'_r(\alpha)>0$ such that
\begin{equation}
\label{eq:standardL2}
\vert <f\circ h^\alpha_t, g>  \vert \leq  C'_r(\alpha) \Vert f \Vert_r \Vert g\Vert_X\, 
t^{-\frac{1-\nu_0}{2}} (1+\log t)^{\epsilon_0} \,. 
\end{equation}
Finally, by taking into account that, for all $f$, $g \in W^r(M)$ and all $t \geq 0$, 
$$
<f\circ h^\alpha_t, g>_{L^2(M, \text{\rm vol}_\alpha)} = <f\circ h^\alpha_t,  \alpha g>\,,
$$
the theorem follows from the estimate in formula~(\ref{eq:standardL2}). In fact, by the Sobolev
embedding theorem, for all $g\in \text{\rm dom}(\mathcal L_X)$, 
$$
\Vert \alpha g \Vert_X   \leq  2 \Vert \alpha\Vert_r   \Vert g \Vert_X \,.
$$

\end{proofof}

\section{Twisted ergodic integrals: square-mean estimates}\label{sec:L2estimates}
We prove below $L^2$ bounds on twisted ergodic integrals of smooth 
functions, that is, on integrals of the form
$$
\int_0^{\mathcal T} w(t)  f\circ h^\alpha_t(x) dt  
$$
for any function $w\in L^\infty(\R, \C)$ and for any sufficiently smooth function
$f$ on $M$. In the next sections we will derive from these bounds 
estimates on spectral measures of smooth time-changes of the horocycle
flow.  The relevant twisted integrals are those with twist function equal to an exponential
function which are related to the Fourier transforms of spectral measures. 

We remark that the question on optimal {\it uniform} bounds for  twisted
ergodic integrals (with an exponential twist) is open even for the classical horocycle flow. 
Non-optimal, polynomial bounds can be derived from estimates on the rate of equidistribution 
and on the rate of mixing by an argument due to A.~Venkatesh \cite{Ve}. Such uniform bounds are
closely related to estimates on the rate of equidistribution of time-$\mathcal T$ maps of
horocycle flows. 

For our main results on spectral measures of time-changes, uniform bounds on twisted
ergodic integrals are not needed. In fact, the key step is to prove square-mean estimates 
of the type below.

\begin{lemma}
\label{lemma:L2twisted_est}
Let $r > 11/2$ and let $\alpha \in W^r(M)$. There exists  $\sigma_r(\alpha)>0$ such that
for all $\sigma \in (0, \sigma_r(\alpha))$ the following holds. There exists a constant $C_{r,\sigma}(\alpha)>0$ such that for any bounded weight function $w\in L^\infty(\R^+, \C)$,  for  any  continuos function $f\in \text{ \rm dom}(X)\subset L^2(M, \text{\rm vol}_\alpha)$ and for all $\mathcal T>0$,
$$
\begin{aligned}
\Vert \int_0^{\mathcal T} w(t) & f\circ h^\alpha_t dt \Vert_{L^2(M, \text{\rm vol}_\alpha)}     \leq  
C_{r,\sigma}(\alpha) \vert w \vert_\infty \Vert f \Vert_X^{1/2}
 \\ &\times \left[   \vert f\vert_\infty^{1/2} \mathcal T + \int_0^{\mathcal T}   \int_0^t  \Vert  
 \sup_{S\in [0, \sigma]}  \vert  \int_{\gamma^S_{x,\tau}} f \hat U_\alpha \vert \Vert_0 
  \frac{d\tau dt}{\tau}  \right]^{1/2}.
 \end{aligned}
$$
\end{lemma}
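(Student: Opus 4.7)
The plan is to expand the squared $L^2(M,\text{\rm vol}_\alpha)$-norm of $F:=\int_0^{\mathcal T} w(t)\,f\circ h^\alpha_t\,dt$ as a double integral of correlations and then estimate those correlations by combining the integration-by-parts identity of Lemma~\ref{lemma:int_parts} with the pointwise bootstrap bound of Lemma~\ref{lemma:mixingbound}. Using $\{h^\alpha_t\}$-invariance of $\text{\rm vol}_\alpha$, the substitution $\tau = t-s$ and the symmetry $t\leftrightarrow s$, one obtains
\[
\Vert F\Vert^2_{L^2(\text{\rm vol}_\alpha)} \;\leq\; 2\vert w\vert^2_\infty \int_0^{\mathcal T}\!\!\int_0^t \big\vert\langle f\circ h^\alpha_\tau, f\rangle_{L^2(\text{\rm vol}_\alpha)}\big\vert\,d\tau\,dt,
\]
so it suffices to give a pointwise (in $\tau$) bound on the correlation.

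To bound the correlation I would first convert it to the standard $L^2$ pairing, writing $\langle f\circ h^\alpha_\tau, f\rangle_{L^2(\text{\rm vol}_\alpha)} = \langle f\circ h^\alpha_\tau,\alpha f\rangle_{L^2(M)}$, then apply Lemma~\ref{lemma:int_parts} with $g=\alpha f$, and finally use Cauchy--Schwarz in $L^2(\text{\rm vol})$ together with the $\text{\rm vol}$-invariance of $\{\phi^X_s\}$ to get
\[
\big\vert\langle f\circ h^\alpha_\tau, f\rangle_{L^2(\text{\rm vol}_\alpha)}\big\vert \;\leq\; C(\sigma)\Big\Vert\sup_{S\in[0,\sigma]}\big\vert\!\int_0^S f\circ h^\alpha_\tau\circ\phi^X_s\,ds\big\vert\Big\Vert_0 \Vert\alpha f\Vert_X.
\]
Since $W^r(M)$ is a Banach algebra for $r>3/2$ (and $r>11/2$ is well above), the product rule and Sobolev embedding give $\Vert\alpha f\Vert_X\leq C_r(\alpha)\Vert f\Vert_X$, so all $\alpha$-dependence is absorbed into the constants.

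The key step is then a scale-splitting estimate for the $L^2(\text{\rm vol})$-norm above. For $\tau\geq t_{r,\sigma}(\alpha)$, Lemma~\ref{lemma:mixingbound} provides the pointwise bootstrap inequality
\[
\sup_{S\in[0,\sigma]}\big\vert\!\int_0^S f\circ h^\alpha_\tau\circ\phi^X_s(x)\,ds\big\vert \;\leq\; \frac{C_{r,\sigma}(\alpha)}{\tau}\sup_{S\in[0,\sigma]}\big\vert\!\int_{\gamma^S_{x,\tau}} f\,\hat U_\alpha\big\vert,
\]
whose $L^2(\text{\rm vol})$-norm in $x$, substituted into the correlation estimate and integrated in $(\tau,t)$, reproduces exactly the double integral appearing in the statement. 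For the short-time range $\tau<t_{r,\sigma}(\alpha)$, where the bootstrap is unavailable, I would use the elementary estimate $\big\vert\!\int_0^S f\circ h^\alpha_\tau\circ\phi^X_s(x)\,ds\big\vert\leq\sigma\vert f\vert_\infty$ (refined via Cauchy--Schwarz in $s$ to the $L^2(\text{\rm vol})$-bound $C\sigma\Vert f\Vert_0$ using the quasi-invariance of $\text{\rm vol}$ under $h^\alpha_\tau$). The resulting short-time contribution is of order $\vert f\vert_\infty\Vert f\Vert_X \mathcal T$, with the threshold $t_{r,\sigma}(\alpha)$ absorbed into the constant; after taking square roots, applying the subadditivity $\sqrt{x+y}\leq\sqrt x+\sqrt y$, and using $\Vert f\Vert_0\leq\vert f\vert_\infty$, this generates the $\vert f\vert_\infty^{1/2}\mathcal T$ term in the claim.

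The main technical obstacle is the passage between the standard pairing $L^2(\text{\rm vol})$ (where Lemma~\ref{lemma:int_parts} and the pointwise bootstrap of Lemma~\ref{lemma:mixingbound} naturally operate) and the invariant pairing $L^2(\text{\rm vol}_\alpha)$ (in which the target estimate is stated), while preserving uniformity of the constants in $\Vert\alpha\Vert_r$; the substitution $g=\alpha f$ together with the algebra bound $\Vert\alpha f\Vert_X\leq C_r(\alpha)\Vert f\Vert_X$ handles this. A secondary subtlety is that the bootstrap is conditional on $\tau>t_{r,\sigma}(\alpha)$, so the unavoidable short-time contribution has to be absorbed by purely elementary $L^\infty$/$L^2$ comparisons, and this is precisely what determines the exponent of $\vert f\vert_\infty$ in the first summand of the bound.
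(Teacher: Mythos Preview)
Your proposal is correct and follows essentially the same route as the paper: expand the squared $L^2(\text{\rm vol}_\alpha)$-norm as a double integral of correlations via invariance of $\text{\rm vol}_\alpha$, pass to the standard pairing with $g=\alpha f$, apply Lemma~\ref{lemma:int_parts}, and then split into the range $\tau>t_{r,\sigma}(\alpha)$ (handled by the bootstrap Lemma~\ref{lemma:mixingbound}) and the short range (handled by the trivial $L^\infty$ bound). The only cosmetic difference is that the paper keeps the weights $w(t)\overline{w(t-\tau)}$ inside the identity and applies Lemma~\ref{lemma:int_parts} to the whole expression before estimating, whereas you bound out $\vert w\vert_\infty^2$ first and estimate each correlation $\langle f\circ h^\alpha_\tau,f\rangle$ separately; this changes nothing in the final inequality.
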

\begin{proof} By  the invariance of the volume form under the reparametrized horocycle flow and by change of variables we have that 
$$
\Vert \int_0^{\mathcal T}  w(t) f\circ h^\alpha_t  dt \Vert^2_{L^2(M, \text{\rm vol}_\alpha)}
= 2 \Re <\int_0^{\mathcal T}  \int_0^t w(t)   \overline{ w(t-\tau)} f \circ h^\alpha_\tau d\tau \, dt , \alpha f >.
$$
For every fixed $t\in \R$, let $w_t \in C^0(\R, \C)$ be the bounded weight function
defined as
$$
w_t (\tau) :=  \overline{ w(t-\tau)} \,, \quad \text{ \rm for all } \tau \in\R\,.
$$
By the formula of Lemma~\ref{lemma:int_parts} we have that
$$
\begin{aligned}
 &<\int_0^{\mathcal T}  \int_0^t   w(t) w_t(\tau) f \circ h^\alpha_\tau d\tau \, dt , \alpha f > \\ &
 = \frac{1}{\sigma}  \int_0^{\mathcal T}  \int_0^t w(t)    w_t(\tau) < \int _0^\sigma f\circ h^\alpha_\tau
 \circ \phi^X_s ds, (\alpha f) \circ \phi^X_\sigma> d\tau dt \\
& - \frac{1}{\sigma}
   \int_0^{\mathcal T}   \int_0^t   w(t)   w_t(\tau)  \int _0^\sigma <\int _0^S f\circ h^\alpha_\tau \circ \phi^X_s ds, \mathcal L_X (\alpha f) \circ \phi^X_S> dS  dt d\tau \,.
\end{aligned}
$$
The statement of the lemma then follows  from Lemma~\ref{lemma:mixingbound} and from the estimate
$$
\Vert  \int_0^{\mathcal T}   \int_0^{t_{r,\sigma}(\alpha)}  \int _0^S   w(t)   w_t(\tau) f\circ h^\alpha_\tau \circ \phi^X_s ds\, dt\,d\tau \Vert_0  \leq  \vert w \vert^2_\infty  \vert f\vert_\infty  t_{r,\sigma}(\alpha) S
\mathcal T \,.
$$
\end{proof}

\begin{thm}
\label{thm:twisted_int_L2bound}
Let $r > 11/2$. For any $\alpha \in W^r(M)$, there exists a constant $C_r(\alpha)>0$ such that the following holds. For any bounded weight function $w\in L^\infty(\R^+, \C)$, for  any zero-average function $f\in W^r(M)\cap L^2_0(M, \text{\rm vol}_\alpha)$ and for all $\mathcal T>0$,
$$
\Vert \int_0^{\mathcal T}  w(t) f\circ h^\alpha_t dt \Vert_{L^2(M, \text{\rm vol}_\alpha)} \leq  C_r(\alpha)\vert w \vert_\infty \Vert f \Vert_r  \mathcal T ^{\frac{3+\nu_0}{4}} [1+\log^+ \mathcal T]^{\frac{\epsilon_0}{2}}\,.
$$
\end{thm}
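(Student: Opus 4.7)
The plan is to feed the equidistribution estimate of Lemma \ref{lemma:BFbound} into the bracket produced by Lemma \ref{lemma:L2twisted_est} and then to evaluate the resulting elementary time integrals, handling the regime $\mathcal{T} \leq 1$ separately by a trivial bound.

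I first fix once and for all some $\sigma \in (0,\sigma_r(\alpha))$, so that all subsequent constants depend only on $r$ and $\alpha$. The theorem then reduces to bounding
\[
\mathcal{I}(\mathcal{T}) := \int_0^{\mathcal{T}} \int_0^t \Vert \sup_{S \in [0,\sigma]} \vert \int_{\gamma^S_{x,\tau}} f \hat U_\alpha \vert \Vert_0 \, \frac{d\tau \, dt}{\tau}.
\]
Because $f \in W^r(M) \cap L^2_0(M,\text{\rm vol}_\alpha)$ and $r > 11/2$, Lemma \ref{lemma:BFbound} provides a bound, uniform in $x \in M$ and in $S \in (0,\sigma]$, of the shape $C_r(\alpha)\Vert f\Vert_r\, \tau^{(1+\nu_0)/2}[1+\log^+\tau]^{\epsilon_0}$ (folding the fixed factor $\sigma^{(1+\nu_0)/2}$ and $\log^+\sigma$ into the constant); compactness of $M$ transfers this pointwise bound to the $\Vert \cdot \Vert_0$-norm.

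Since $\nu_0 < 1$, the exponent $(\nu_0 - 1)/2$ exceeds $-1$, so the inner integral $\int_0^t \tau^{(\nu_0-1)/2}[1+\log^+\tau]^{\epsilon_0}\,d\tau$ converges at the origin and equals $O(t^{(1+\nu_0)/2}[1+\log^+ t]^{\epsilon_0})$; one further integration in $t$ then yields $\mathcal{I}(\mathcal{T}) \leq C \Vert f\Vert_r \mathcal{T}^{(3+\nu_0)/2}[1+\log^+ \mathcal{T}]^{\epsilon_0}$. Plugging this into Lemma \ref{lemma:L2twisted_est}, and using the Sobolev embedding $\vert f\vert_\infty \leq C\Vert f\Vert_r$ together with $\Vert f\Vert_X \leq C\Vert f\Vert_r$ (both valid since $r > 11/2 > 3/2$) to control the prefactor $\Vert f\Vert_X^{1/2}$ and the first summand $\vert f\vert_\infty^{1/2}\mathcal{T}$ of the bracket, one obtains for $\mathcal{T} \geq 1$ precisely the claimed estimate with exponent $(3+\nu_0)/4$ and log factor $[1+\log^+ \mathcal{T}]^{\epsilon_0/2}$.

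For the complementary range $\mathcal{T} \leq 1$, the trivial Cauchy--Schwarz bound
\[
\Vert \int_0^{\mathcal{T}} w(t) f\circ h^\alpha_t \, dt\Vert_{L^2(M,\text{\rm vol}_\alpha)} \leq \vert w\vert_\infty \Vert f\Vert_{L^2(M,\text{\rm vol}_\alpha)} \mathcal{T}
\]
already suffices, since $(3+\nu_0)/4 < 1$ forces $\mathcal{T} \leq \mathcal{T}^{(3+\nu_0)/4}$ on this range. No step presents any essential obstacle, the argument being almost purely bookkeeping; the one point requiring some care is checking that $\nu_0 < 1$ (a consequence of the spectral gap of the hyperbolic surface) to ensure integrability at the origin of the inner $\tau$-integral, without which the square-mean bound would degrade.
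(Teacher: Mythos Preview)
Your proposal is correct and follows essentially the same approach as the paper's own proof, which simply cites Lemma~\ref{lemma:BFbound} for the pointwise bound on the path integral and says the result follows from Lemma~\ref{lemma:L2twisted_est} ``by integration''. You have merely filled in the details the paper omits (the elementary $\tau$- and $t$-integrations, the Sobolev control of $\Vert f\Vert_X$ and $\vert f\vert_\infty$, the observation that $\nu_0<1$ guarantees integrability at $\tau=0$, and the trivial treatment of $\mathcal T\le 1$).
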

\begin{proof} By the equidistribution estimates proved in Lemma~\ref{lemma:BFbound},  for any $\sigma >0$ there exists a constant $C_{r,\sigma}(\alpha)>0$ such that for all $f\in W^r(M)\cap L^2_0(M, \text{\rm vol}_\alpha)$ and for all $x\in M$ and all $\tau>0$, 
$$
 \sup_{S\in [0,\sigma]}  \vert  \int_{\gamma^S_{x,\tau}}  
f \hat U_\alpha \vert 
\leq C_{r,\sigma}(\alpha)  \Vert f \Vert_r
\tau^{\frac{1+\nu_0}{2}} [1+\log^+ \tau]^{\epsilon_0}\,.
$$
The statement of the theorem then follows from Lemma~\ref{lemma:L2twisted_est} by integration.
\end{proof}

We remark that more refined estimates can be proved for functions supported on finite
codimensional subspaces orthogonal to irreducible components of the complementary
series and for coboundaries. From the estimates for coboundaries, which will be fully carried
out below, we will deduce that all smooth time-changes of the horocycle flow have absolutely
continuous spectrum.

\section {Spectral theory}\label{sec:spectrum}
In this final section we state and prove spectral results for smooth time changes of horocycle flows. 
In Section \ref{subsec:local} we first show  a local estimate on spectral measures of smooth functions (see Theorem \ref{thm:spectral_measures}). 
 Exploiting the $L_2$-bounds established in the previous Section~\ref{sec:L2estimates}, in Section \ref{subsec:ac} we prove the absolute continuity of the spectrum  (with countable multiplicity) for all smooth time-changes 
of the classical horocycle flow (Theorem \ref{thm:abs_cont_spectrum}). Finally in Section \ref{subsec:spectral_type} we show  that the maximal spectral type is always
equivalent to Lebesgue (Theorem \ref{thm:Max_spec_type}). 

\subsection{Local estimates} \label{subsec:local}

Let $\mu_f$ denote the spectral measures of a function $f\in L^2(M,\text{\rm vol}_\alpha)$. 
We recall that  $\mu_f$ is a complex measure on the real line. The main result derived in this section is the following.

\begin{thm}
\label{thm:spectral_measures}
Let $r>11/2$ and let $\alpha\in W^r(M)$.  There exists a constant $C_r(\alpha)>0$ such that,
for any function $u \in W^{r+1}(M)$ and for any $\xi \in \R\setminus\{0\}$,
$$
\vert \mu_u ( \xi -\delta, \xi +\delta)  \vert  \leq  C_r(\alpha) \Vert u \Vert_{r+1}   \frac{\delta \vert \log  \delta \vert}{\xi^2}\,, \quad \text{ \rm for all }\, \delta \in (0,\vert \xi\vert/2)\,,
$$
hence the measure $\mu_u$ has local dimension  $1$ at all points $\xi \in \R\setminus\{0\}$, that is, 
$$
\lim_{\delta\to 0^+}  \frac{ \log\mu_u ( \xi -\delta, \xi +\delta) }{\log \delta} =1\,.
$$
\end{thm}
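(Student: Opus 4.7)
The plan is to reduce the local bound on the spectral measure to an $L^2$ estimate on twisted ergodic integrals, integrate by parts once in $t$ to extract the factor $\xi^{-2}$, and then establish a sharp $L^2$ bound on the twisted integral of the coboundary $U_\alpha u$.

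First I would apply the spectral theorem to the unitary group generated by $\{h^\alpha_t\}$ on $L^2(M,\text{\rm vol}_\alpha)$. Writing $V_T(u)(x):=\int_0^T e^{-it\xi}u\circ h^\alpha_t(x)\,dt$, one has
\begin{equation*}
\Vert V_T(u)\Vert^2_{L^2(\text{\rm vol}_\alpha)} \,=\, \int_\R \frac{4\sin^2\bigl(T(\lambda-\xi)/2\bigr)}{(\lambda-\xi)^2}\,d\mu_u(\lambda),
\end{equation*}
and since the Fej\'er-type kernel on the right is bounded below by a positive constant times $T^2$ on $(\xi-1/T,\xi+1/T)$, the choice $T=1/\delta$ gives $\mu_u(\xi-\delta,\xi+\delta) \leq C\,\delta^2\,\Vert V_{1/\delta}(u)\Vert^2_{L^2(\text{\rm vol}_\alpha)}$. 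Since $\tfrac{d}{dt}(u\circ h^\alpha_t)=(U_\alpha u)\circ h^\alpha_t$, a single integration by parts in $t$ yields
\begin{equation*}
V_T(u)=\frac{-1}{i\xi}\bigl(e^{-iT\xi}\,u\circ h^\alpha_T-u\bigr)+\frac{1}{i\xi}V_T(U_\alpha u),
\end{equation*}
so that $\Vert V_T(u)\Vert^2 \leq C\,\xi^{-2}\,(\Vert u\Vert^2+\Vert V_T(U_\alpha u)\Vert^2)$. This supplies the $\xi^{-2}$ factor, at the price of replacing $u$ by the coboundary $U_\alpha u$.

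The crux is then a sharpened $L^2$ bound
\begin{equation*}
\Vert V_T(U_\alpha u)\Vert^2_{L^2(\text{\rm vol}_\alpha)} \,\leq\, C_r(\alpha)\,\Vert u\Vert_{r+1}^2\,T\,(1+\log^+T),
\end{equation*}
which is essentially linear in $T$ and therefore considerably sharper than the $O(T^{(3+\nu_0)/2}\log^{\epsilon_0}T)$ bound supplied by Theorem~\ref{thm:twisted_int_L2bound}. To obtain it I would apply Lemma~\ref{lemma:L2twisted_est} with $f=U_\alpha u$ and $w(t)=e^{-it\xi}$, improving the inner path integrals by exploiting the coboundary structure. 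By Lemma~\ref{lemma:velocity}, $\tfrac{d}{ds}(u\circ\gamma^S_{x,\tau})(s)=v_\tau(x,s)(U_\alpha u)\circ\gamma^S_{x,\tau}(s)+(Xu)\circ\gamma^S_{x,\tau}(s)$, and integration from $0$ to $S$ produces the identity
\begin{equation*}
\int_{\gamma^S_{x,\tau}}(U_\alpha u)\,\hat U_\alpha \,=\, u\bigl(\gamma^S_{x,\tau}(S)\bigr)-u\bigl(\gamma^S_{x,\tau}(0)\bigr)-\int_0^S(Xu)\circ\gamma^S_{x,\tau}(s)\,ds,
\end{equation*}
which, by Sobolev embedding for $r>5/2$, is bounded uniformly in $\tau$ by $C(1+S)\Vert u\Vert_r$. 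For short times $\tau\leq 1$ one uses the complementary bound $|\int_{\gamma^S_{x,\tau}}(U_\alpha u)\hat U_\alpha|\leq CS\tau\,\Vert u\Vert_{r+1}$, which follows from the uniform estimate $|v_\tau(x,s)|\leq C\tau$; this cancels the $1/\tau$ singularity of the weight in Lemma~\ref{lemma:L2twisted_est}, leaving a contribution of only $O(T\log^+T)$ from the iterated integral.

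Combining everything with $T=1/\delta$ produces the desired bound $\mu_u(\xi-\delta,\xi+\delta) \leq C_r(\alpha)\,\Vert u\Vert_{r+1}^2\,\delta\log(1/\delta)/\xi^2$, and the local dimension identity then follows by taking logarithms, together with the matching lower bound $\mu_u(\xi-\delta,\xi+\delta)\gtrsim\delta$ coming from the absolute continuity of $\mu_u$ established subsequently in Section~\ref{subsec:ac}. The main obstacle is the refined coboundary bound: the algebraic identity itself is immediate, but converting it into a uniform $L^2$ bound on $M$ requires both the short-time $\tau\to 0$ analysis that cancels the $1/\tau$ weight and a check that taking the supremum over $S\in[0,\sigma]$ in Lemma~\ref{lemma:L2twisted_est} does not inflate the $(1+S)\Vert u\Vert_r$ factor --- this works because $\sigma$ may be chosen small and independently of $T$.
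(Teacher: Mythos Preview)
Your proposal is correct and follows essentially the same route as the paper. The only cosmetic difference is how the factor $\xi^{-2}$ is extracted: you integrate by parts in $t$ to pass from $V_T(u)$ to $V_T(U_\alpha u)$, whereas the paper works directly with the spectral identity $d\mu_{U_\alpha u}(\xi)=\xi^2\,d\mu_u(\xi)$ and bounds $\mu_{U_\alpha u}$ instead; these are dual formulations of the same step. Your coboundary path-integral identity is exactly the paper's Lemma~\ref{lemma:coboundaries1}, and your short-time $\tau\leq 1$ analysis is the counterpart of the $|f|_\infty^{1/2}\mathcal T$ term in Lemma~\ref{lemma:L2twisted_est}, which absorbs the contribution below the threshold $t_{r,\sigma}(\alpha)$ where Lemma~\ref{lemma:mixingbound} does not apply.
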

The above result implies that the local dimension of spectral measures of smooth functions
is everywhere equal to $1$, but it is off by a logarithmic term from the sharpest possible bound, which would imply 
that spectral measures of sufficiently smooth functions are absolutely continuous with bounded
densities. In the following section (\S \ref{subsec:ac}) we nevertheless show how one can derive absolutely continuity from the mean-square bounds for ergodic integrals of coboundaries.

We first estimate twisted ergodic integrals of coboundaries.
Let us assume that $f \in L^2(M)$ is a smooth coboundary for the time-change  $\{h^\alpha_t\}$ on 
$M$,  that is,  there exists a function $u\in W^r(M)$ such that
$$
f= U_\alpha u \,.
$$

\begin{lemma} 
\label{lemma:coboundaries1}
There exists a constant $C>0$ such that for all $\sigma >0$, for all continuous coboundaries 
$f= U_\alpha u$ with transfer function $u\in L^\infty(M)$ such that $Xu \in L^\infty(M)$,  for all $x\in M$ and all $t>0$,
$$
 \sup_{S\in [0,\sigma]} \vert   \int_{\gamma^S_{x,t}}  f \hat U_\alpha  \vert  \leq  C\max\{1, \sigma\} 
\max\{ \Vert u \Vert_{L^\infty(M)},   \Vert X u \Vert_{L^\infty(M)} \} \,.
$$
\end{lemma}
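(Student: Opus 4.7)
The plan is to reduce $\int_{\gamma^S_{x,t}} f \hat U_\alpha$ to a fundamental-theorem-of-calculus computation along the curve $\gamma^S_{x,t}$, exploiting the special structure that (i) $f$ is a coboundary and (ii) the velocity of $\gamma^S_{x,t}$ has no $V$-component. First, I would unpack the definitions of the dual $1$-form $\hat U_\alpha$ and combine with Lemma~\ref{lemma:velocity}: since $\imath_X \hat U_\alpha = \imath_V \hat U_\alpha = 0$ and $\imath_{U_\alpha}\hat U_\alpha = 1$, while $d\gamma^S_{x,t}/ds = v_t(x,s) U_\alpha + X$ (no $V$ term), pulling the $1$-form back to $[0,S]$ gives
$$
\int_{\gamma^S_{x,t}} f \hat U_\alpha \;=\; \int_0^S v_t(x,s)\, (U_\alpha u)\bigl(\gamma^S_{x,t}(s)\bigr)\,ds.
$$

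Next, I would use $f = U_\alpha u$ to recognize the integrand as (up to one error term) a total derivative along the curve. By the chain rule,
$$
\frac{d}{ds}\bigl[u\circ \gamma^S_{x,t}\bigr](s) \;=\; v_t(x,s)\,(U_\alpha u)\bigl(\gamma^S_{x,t}(s)\bigr) \;+\; (Xu)\bigl(\gamma^S_{x,t}(s)\bigr),
$$
so that
$$
\int_{\gamma^S_{x,t}} f \hat U_\alpha \;=\; u\bigl(\gamma^S_{x,t}(S)\bigr) \;-\; u\bigl(\gamma^S_{x,t}(0)\bigr) \;-\; \int_0^S (Xu)\bigl(\gamma^S_{x,t}(s)\bigr)\,ds.
$$
The boundary term is bounded in absolute value by $2\Vert u\Vert_{L^\infty(M)}$, and the remainder integral by $S \Vert Xu\Vert_{L^\infty(M)} \leq \sigma \Vert Xu\Vert_{L^\infty(M)}$. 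Taking the supremum over $S\in[0,\sigma]$ and collecting the two contributions yields the claimed inequality with an absolute constant (one can take $C=3$) and the prefactor $\max\{1,\sigma\}$.

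I do not expect any real obstacle here: the heavy machinery of previous sections (velocity estimates, bootstrap, equidistribution of horocycle-like arcs) is not needed because the coboundary structure trivializes the oscillatory behaviour of the integral. The two key observations — that $\gamma^S_{x,t}$ carries no $V$-component so that pairing with $\hat U_\alpha$ returns exactly the $U_\alpha$-coefficient $v_t(x,s)$, and that $v_t(x,s)(U_\alpha u)$ is a perfect $s$-derivative modulo $Xu$ — are both immediate from Lemma~\ref{lemma:velocity} and the definition of $\hat U_\alpha$. The uniformity in $t$ is automatic, since neither the boundary values of $u$ nor the $L^\infty$-norm of $Xu$ depend on $t$; this is precisely the gain that makes coboundaries behave much better than general smooth functions, and that will be used in Section~\ref{subsec:ac} to upgrade the square-mean bounds to absolute continuity of the spectrum.
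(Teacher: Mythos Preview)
Your proposal is correct and is essentially the paper's own argument: the paper writes $f\hat U_\alpha = du - (Xu)\hat X$ along $\gamma^S_{x,t}$ (using that the velocity has no $V$-component) and integrates, arriving at the identical formula $u\circ h^\alpha_t\circ\phi^X_S(x) - u\circ h^\alpha_t(x) - \int_0^S Xu\circ h^\alpha_t\circ\phi^X_s(x)\,ds$, which is exactly your chain-rule computation rephrased in differential-form language. The bound with $C=3$ then follows as you indicate.
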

\begin{proof}
By the definition of the path $\gamma^\sigma_{x,t}$ in formula~(\ref{eq:pushforwardpaths}) 
and by Lemma~\ref{lemma:velocity} we have
\begin{equation}
\label{eq:coboundary_formula}
\begin{aligned}
 \int_{\gamma^S_{x,t}}  f \hat U_\alpha &=  \int_{\gamma^S_{x,t}} du -
 \int_{\gamma^S_{x,t}} Xu \hat X \\ & = u\circ h^\alpha_t \circ \phi^X_S(x) - u\circ h^\alpha_t(x)- 
 \int_0^S Xu \circ h^\alpha_t \circ \phi^X_s(x) ds \,,
 \end{aligned}
\end{equation}
hence the statement of the lemma follows.
\end{proof}

\begin{cor}
\label{cor:coboundaries}
Let $r>11/2$ and let $\alpha\in W^r(M)$. There exists a constant $C_r(\alpha)>0$ such that 
 for any bounded weight function $w\in L^\infty(\R^+, \C)$, for all coboundaries 
 $f= U_\alpha u$ with a transfer function $u\in W^{r+1}(M)$ and for all $\mathcal T>0$,
$$
\Vert \int_0^{\mathcal T}  w(t) f\circ h^\alpha_t dt \Vert_{L^2(M, \text{\rm vol}_\alpha)} \leq  C_r(\alpha)\vert w \vert_\infty \Vert u \Vert_{r+1}  \mathcal T^{1/2} (1+ \log^+ \mathcal T)^{1/2}\,.
$$
\end{cor}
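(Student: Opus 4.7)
The plan is to apply the abstract square-mean bound of Lemma \ref{lemma:L2twisted_est} to $f = U_\alpha u$, using the coboundary-specific pointwise estimate of Lemma \ref{lemma:coboundaries1} to replace the factor that typically grows in $\tau$ by a uniform constant. The heuristic is that the arc integrals $\int_{\gamma^S_{x,\tau}} f \hat U_\alpha$, which for a general smooth $f$ grow like $\tau^{(1+\nu_0)/2}$ (this is what drives the exponent in Theorem \ref{thm:twisted_int_L2bound}), collapse for a coboundary via the telescoping identity (\ref{eq:coboundary_formula}) to two boundary values of the transfer function $u$ plus an arc integral of $Xu$ of bounded length $S \leq \sigma$. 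Both contributions are bounded uniformly in $\tau$, and this is exactly the improvement that upgrades the $\mathcal T^{(3+\nu_0)/4}$ rate to the claimed $\mathcal T^{1/2} (1+\log^+\mathcal T)^{1/2}$.

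First I would fix $\sigma \in (0, \sigma_r(\alpha))$. Since $r+1 > 5/2$, the Sobolev embedding $W^{r+1}(M) \hookrightarrow L^\infty(M)$ on the three-dimensional manifold $M$ bounds both $\Vert u\Vert_{L^\infty(M)}$ and $\Vert Xu\Vert_{L^\infty(M)}$ by a constant multiple of $\Vert u\Vert_{r+1}$. Combined with Lemma \ref{lemma:coboundaries1}, this yields the uniform-in-$\tau$ estimate
$$
\bigl\Vert \sup_{S \in [0,\sigma]} \bigl\vert \int_{\gamma^S_{x,\tau}} f \hat U_\alpha\bigr\vert \bigr\Vert_0 \leq C(\sigma,\alpha) \Vert u\Vert_{r+1}.
$$
Plugging this into the right-hand side of Lemma \ref{lemma:L2twisted_est} and noting that the logarithmic singularity at $\tau = 0$ of the kernel $d\tau/\tau$ is harmless thanks to the cutoff $t_{r,\sigma}(\alpha)$ already present in the proof of that lemma (the small-$\tau$ contribution being absorbed by the auxiliary $\vert f\vert_\infty^{1/2} \mathcal T$ summand), the double integral contributes at most $C\Vert u\Vert_{r+1}\, \mathcal T \log^+ \mathcal T$ inside the square bracket.

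To close, I would observe that $f = Uu/\alpha$, so the Banach algebra property of $W^r(M)$ for $r > 3/2$ together with the positivity of $\alpha$ gives $\Vert f\Vert_X \leq C(\alpha)\Vert u\Vert_{r+1}$ and $\vert f\vert_\infty \leq C(\alpha)\Vert u\Vert_{r+1}$. Taking the square root of the resulting inequality, the prefactor $\Vert f\Vert_X^{1/2}$ contributes one factor of $\Vert u\Vert_{r+1}^{1/2}$ and the bracket contributes the other, yielding the announced linear dependence on $\Vert u\Vert_{r+1}$ together with the factor $\mathcal T^{1/2}(1+\log^+\mathcal T)^{1/2}$. I do not expect a substantive obstacle: the argument is essentially a bookkeeping step once Lemma \ref{lemma:coboundaries1} and Lemma \ref{lemma:L2twisted_est} are in hand. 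The only point meriting mild care is handling the $d\tau/\tau$ singularity near $\tau = 0$, but this is routine given the structure of Lemma \ref{lemma:L2twisted_est}.
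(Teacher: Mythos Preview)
Your proposal is correct and follows exactly the paper's approach: Sobolev embedding to control $\Vert u\Vert_{L^\infty}$ and $\Vert Xu\Vert_{L^\infty}$ by $\Vert u\Vert_{r+1}$, then Lemma~\ref{lemma:coboundaries1} to make the arc integrals uniformly bounded in $\tau$, then integration in Lemma~\ref{lemma:L2twisted_est}. Your remark about the $d\tau/\tau$ singularity near $\tau=0$ being absorbed by the cutoff $t_{r,\sigma}(\alpha)$ and the $\vert f\vert_\infty^{1/2}\mathcal T$ term is the right reading of Lemma~\ref{lemma:L2twisted_est}; the paper's proof is terser still and simply says ``follows by integration''.
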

\begin{proof} By Sobolev embedding theorem if  $u\in W^{r+1}(M)$ then $u$, $Xu\in L^\infty(M)$ 
and the following estimate holds: there exists a constant $C_r>0$ such that
$$
\max\{ \Vert u \Vert_{L^\infty(M)},   \Vert X u \Vert_{L^\infty(M)} \} \leq C_r \Vert u\Vert_{r+1}\,.
$$
The statement then follows by integration from Lemma~\ref{lemma:L2twisted_est} and 
Lemma~\ref{lemma:coboundaries1}. 
\end{proof}

\begin{proofof}{Theorem}{thm:spectral_measures} 
By the spectral theorem, for any 
function $f\in L^2(M, \text{\rm vol}_\alpha)$ and  any $\xi \in \R$ ,
\begin{equation}
\label{eq:spectral_thm}
\begin{aligned}
 \Vert   \frac{e^{i (\xi + \eta)\mathcal T} -1}{i (\xi + \eta)}   \Vert^2_{L^2_\eta(\R, d\mu_f)} 
 &=  \Vert  \int_0^\mathcal T e^{i (\xi + \eta)t} dt   \Vert^2_{L^2_\eta(\R, d\mu_f)} \\
&= \Vert \int_0^{\mathcal T} e^{i \xi t} f\circ h^\alpha_t  dt  \Vert_0^2\,.
\end{aligned}
\end{equation}
By a simple computation, there exists a constant $C>0$ such that
\begin{equation}
\label{eq:meas_bound}
\begin{aligned}
 \mu_f ( \xi -\frac{1}{\mathcal T}, \xi + \frac{1}{\mathcal T}) &\leq 
C \int_{-\frac{1}{\mathcal T}}^{\frac{1}{\mathcal T}}  \vert 
\frac{e^{i (\xi + \eta)\mathcal T} -1}{i (\xi + \eta)\mathcal T} \vert^2  d\mu_f(\eta)  \\
 &\leq \frac{C}{\mathcal T^2} \Vert   \frac{e^{i (\xi + \eta)\mathcal T} -1}{i (\xi + \eta)}   
\Vert^2_{L^2_\eta(\R, d\mu_f)} \,.
\end{aligned}
\end{equation}
Let $u\in \text{\rm dom}(U_\alpha) \subset L^2(M, \text{\rm vol}_\alpha)$ and let $f:=U_\alpha u \in L^2(M, \text{\rm vol}_\alpha)$.  By the spectral theorem we have that
\begin{equation}
\label{eq:spectral_meas_der1}
d\mu_{f}(\xi) =  \xi^2  d\mu_ u(\xi) \,, \quad \text{ for all } \, \xi \in \R\,,
\end{equation}
hence there exists a constant $C'>0$ such that, for all $\xi \in \R\setminus \{0\}$,
\begin{equation}
\label{eq:spectral_meas_der2}
\mu_u ( \xi -\delta, \xi + \delta)  \leq   C'  \frac{\mu_{f} ( \xi -\delta, \xi + \delta)}{\xi^2}\,,
\quad \text{ \rm for all } \delta \in (0, \vert \xi \vert/2)\,.
\end{equation}
By formulas~(\ref{eq:spectral_thm}) and~(\ref{eq:meas_bound}) and by Corollary~\ref{cor:coboundaries}, it follows that there exists a constant $C_r(\alpha)>0$ such that, for all functions 
$u\in W^{r+1}(M)$ and for all $\mathcal T>0$, 
\begin{equation}
\label{eq:coboundary_meas_est}
 \mu_{f} ( \xi -\frac{1}{\mathcal T}, \xi + \frac{1}{\mathcal T}) \leq 
 C_r(\alpha) \Vert u\Vert_{r+1} \frac{ (1+ \log^+ \mathcal T)}{\mathcal T}\,.
\end{equation}
The statement of the theorem can readily be derived from  formulas~(\ref{eq:spectral_meas_der2}) 
and~(\ref{eq:coboundary_meas_est}).
\end{proofof}

\subsection{Absolute continuity}\label{subsec:ac}
We show in this section that 
that the  spectral measure $\mu_f$ of any function $f\in L^2(M,\text{\rm vol}_\alpha)$ is absolutely continuous with respect to the Lebesgue measure on the real line and hence that any smooth time-change of the horocycle flow has absolutely continuous spectrum. 

\begin{thm} 
\label{thm:abs_cont_spectrum}
Let $r>11/2$ and let $\alpha \in W^r(M)$. The time-change $\{h^\alpha_t\}$ of the (stable) 
horocycle flow $\{h^U_t\}$ with infinitesimal generator $U_\alpha:= U/\alpha$  has purely
absolutely continuous spectrum.
\end{thm}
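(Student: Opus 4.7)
The plan is to exhibit a dense subspace of $L^2_0(M, \text{\rm vol}_\alpha)$ consisting of functions with absolutely continuous spectral measure, and then invoke closedness of the absolutely continuous subspace. The natural choice of dense subspace will be the smooth coboundaries $\{U_\alpha u : u \in W^{r+1}(M)\}$. Since the a.c. subspace $H_{ac} := \{f \in L^2(M, \text{\rm vol}_\alpha) : \mu_f \ll \Leb\}$ is closed and $\{h^\alpha_t\}$-invariant, and the constants account for the atom at eigenvalue $0$, it suffices to establish $L^2_0(M, \text{\rm vol}_\alpha) \subseteq H_{ac}$.

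The central step is to show that every smooth coboundary has absolutely continuous spectral measure. Given $u \in W^{r+1}(M)$ and $f = U_\alpha u$, I would apply Corollary~\ref{cor:coboundaries} with weight $w(t) = e^{i\xi t}$ (so $\vert w \vert_\infty = 1$) to obtain, uniformly in $\xi \in \R$,
\begin{equation*}
\Vert \int_0^{\mathcal T} e^{i\xi t} f \circ h^\alpha_t \, dt \Vert^2_{L^2(M,\text{\rm vol}_\alpha)} \leq C_r(\alpha)^2 \Vert u \Vert_{r+1}^2 \, \mathcal T\, (1 + \log^+ \mathcal T).
\end{equation*}
By the spectral theorem the left-hand side equals $\int_\R |D_{\mathcal T}(\xi + \eta)|^2\, d\mu_f(\eta)$ with $D_{\mathcal T}(\eta) = (e^{i\eta \mathcal T}-1)/(i\eta)$. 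Since $|D_{\mathcal T}(\eta)|^2 \geq c\, \mathcal T^2$ on $|\eta| \leq 1/\mathcal T$, setting $\delta = 1/\mathcal T$ yields the uniform-in-$\xi$ local bound
\begin{equation*}
\mu_f(\xi - \delta, \xi + \delta) \leq C'\, \Vert u \Vert_{r+1}^2 \, \delta\, \log(1/\delta), \qquad \forall\, \xi \in \R,\ \delta \in (0, 1/2].
\end{equation*}
Crucially, this improves on Theorem~\ref{thm:spectral_measures} by holding uniformly at every $\xi \in \R$, including $\xi = 0$: the $\xi^{-2}$ factor arising from the identity $d\mu_f = \xi^2\, d\mu_u$ is absent here. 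From this uniform local control, I would then conclude that $\mu_f \ll \Leb$.

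To establish density of coboundaries in $L^2_0$, suppose $v \in L^2(M, \text{\rm vol}_\alpha)$ is orthogonal to $\{U_\alpha u : u \in W^{r+1}(M)\}$. Unwinding the definitions, $\int_M (Uu)\, \bar v \, d\text{\rm vol} = 0$ for all such $u$, hence $\bar v$ is an $L^2$-valued $U$-invariant distribution on $M$. By unique ergodicity of the classical horocycle flow, $\bar v$ is constant, and therefore orthogonal to $L^2_0(M, \text{\rm vol}_\alpha)$. Combining with the previous step, the closed invariant subspace $H_{ac}$ contains a dense subset of $L^2_0$, so $L^2_0 \subseteq H_{ac}$ and the spectrum is purely absolutely continuous.

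The main obstacle is the last implication in the second paragraph: passing from the local estimate $\mu_f(I) \leq C\, |I|\, \log(1/|I|)$ to full absolute continuity. The gauge $h(t) = t\, \log(1/t)$ lies strictly between $t$ (which would give a bounded Radon--Nikodym density) and $t^{1-\epsilon}$ (which would only constrain Hausdorff dimension), so neither a Lebesgue differentiation argument nor a Frostman-type covering argument applies directly. One expects to exploit the finer mean-square structure behind Corollary~\ref{cor:coboundaries} rather than its diagonal consequence alone, in the spirit of the introductory remark that coboundaries have square-integrable correlations $t \mapsto \langle f \circ h^\alpha_t, f\rangle_{L^2(M, \text{\rm vol}_\alpha)}$; absolute continuity of $\mu_f$ would then follow from Plancherel's theorem.
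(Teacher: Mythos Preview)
Your overall architecture---density of smooth coboundaries in $L^2_0(M,\text{\rm vol}_\alpha)$, absolute continuity of their spectral measures, and closedness of $H_{ac}$---is exactly the paper's, and your density argument via unique ergodicity is what the paper leaves implicit.

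The self-diagnosed gap is real, and the paper does \emph{not} attempt to close it from the local bound $\mu_f(\xi-\delta,\xi+\delta)\lesssim\delta\log(1/\delta)$; that gauge is genuinely too weak. Instead the paper carries out precisely the Plancherel argument you sketch in your final sentence. The missing ingredient is a \emph{pointwise} (not square-mean) correlation bound for coboundaries: for $f=U_\alpha u$ with $u\in W^{r+1}(M)$ and any $g\in\text{\rm dom}(\mathcal L_X)$,
\[
\vert\langle f\circ h^\alpha_t,\,g\rangle\vert\;\leq\; C_r(\alpha)\,\Vert u\Vert_{r+1}\,\Vert g\Vert_X\,\frac{1}{t}\qquad(t\geq t_r(\alpha)).
\]
This is the paper's Lemma~\ref{lemma:coboundaries2}. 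It is obtained by feeding Lemma~\ref{lemma:coboundaries1} (for coboundaries the path integral $\int_{\gamma^S_{x,t}} f\,\hat U_\alpha$ is bounded \emph{uniformly in $t$}) into the bootstrap Lemma~\ref{lemma:mixingbound} and the geodesic integration-by-parts Lemma~\ref{lemma:int_parts}---the same ingredients that sit behind Corollary~\ref{cor:coboundaries}, but applied directly to a single correlation rather than routed through the squared $L^2$ norm, which is what loses the logarithm. Taking $g=\alpha f$ gives $\hat\mu_f(t)=\langle f\circ h^\alpha_t,f\rangle_{L^2(M,\text{\rm vol}_\alpha)}=O(1/t)$; since $\hat\mu_f$ is bounded and symmetric this forces $\hat\mu_f\in L^2(\R,dt)$, and by Plancherel $\mu_f$ is absolutely continuous with square-integrable density. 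So your closing intuition is exactly right; you should simply bypass Corollary~\ref{cor:coboundaries} and prove the $1/t$ correlation decay directly.
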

The Theorem is derived below from the following estimate on decay of correlations of coboundaries. 

\begin{lemma} 
\label{lemma:coboundaries2}
Let $r>11/2$ and let $\alpha\in W^r(M)$. There exist constants $C_r(\alpha)>0$ and $t_r(\alpha)>0$ such that  the following holds. For any continuous coboundary $f= U_\alpha u$ with a transfer function 
$u\in L^\infty(M)$ such that $Xu\in L^\infty(M)$ and for any $g\in \text{\rm dom}(X)$,  for all 
$t>t_r(\alpha)$,
$$
\vert < f\circ h^\alpha_t, g> \vert  \leq  C_r(\alpha)
 \Vert u \Vert_{r+1}  \Vert g \Vert_X ( 1/t) \,.
$$
\end{lemma}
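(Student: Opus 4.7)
The plan is to follow essentially the same pathway as the proof of Theorem \ref{thm:mixingbound2}, but with one decisive replacement: in the Marcus-style argument, the source of the decay rate is the estimate on the line integral $\int_{\gamma^S_{x,t}} f\hat U_\alpha$. For a generic smooth $f$ one uses the equidistribution bound of Lemma \ref{lemma:BFbound}, which yields growth of order $(St)^{(1+\nu_0)/2}$. For a coboundary $f=U_\alpha u$, however, Lemma \ref{lemma:coboundaries1} collapses this integral (by the coboundary formula \eqref{eq:coboundary_formula}) to a telescoping difference of $u$ along the path, giving a bound that is $O(1)$ in $t$ depending only on $\|u\|_{L^\infty}$ and $\|Xu\|_{L^\infty}$. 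Once this $O(1)$ replaces the $(St)^{(1+\nu_0)/2}$ factor in the rest of the Marcus machinery, the final decay rate improves from $t^{-(1-\nu_0)/2}(1+\log t)^{\epsilon_0}$ to $1/t$.

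Concretely, first I would fix $\sigma\in(0,\sigma_r(\alpha))$ as in Lemma \ref{lemma:mixingbound} and apply that lemma's bootstrap inequality to obtain, for $t>t_{r,\sigma}(\alpha)$,
\begin{equation*}
\sup_{S\in[0,\sigma]}\Bigl|\int_0^S f\circ h^\alpha_t\circ \phi^X_s(x)\,ds\Bigr|\;\leq\; C_{r,\sigma}(\alpha)\sup_{S\in[0,\sigma]}\Bigl|\frac{1}{t}\int_{\gamma^S_{x,t}} f\hat U_\alpha\Bigr|.
\end{equation*}
Next I would insert Lemma \ref{lemma:coboundaries1} on the right-hand side and invoke the Sobolev embedding theorem (valid for $r+1>3/2+1$) to bound $\max\{\|u\|_{L^\infty},\|Xu\|_{L^\infty}\}$ by $C_r\|u\|_{r+1}$. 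This produces the pointwise estimate
\begin{equation*}
\sup_{S\in[0,\sigma]}\Bigl|\int_0^S f\circ h^\alpha_t\circ \phi^X_s(x)\,ds\Bigr|\;\leq\; C'_r(\alpha)\,\|u\|_{r+1}\,(1/t),
\end{equation*}
uniformly in $x\in M$ and $t>t_r(\alpha):=t_{r,\sigma}(\alpha)$.

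The final step is the integration by parts supplied by Lemma \ref{lemma:int_parts}: writing
\begin{equation*}
\langle f\circ h^\alpha_t,g\rangle \;=\; \frac{1}{\sigma}\Bigl\langle\int_0^\sigma f\circ h^\alpha_t\circ\phi^X_s\,ds,\,g\circ\phi^X_\sigma\Bigr\rangle-\frac{1}{\sigma}\int_0^\sigma\Bigl\langle\int_0^S f\circ h^\alpha_t\circ\phi^X_s\,ds,\,\mathcal L_X g\circ\phi^X_S\Bigr\rangle\,dS,
\end{equation*}
I would apply Cauchy--Schwarz together with the invariance of $\text{vol}$ under $\phi^X_s$, then substitute the pointwise estimate above; the first term contributes $\|g\|_0/t$ and the second contributes $\|\mathcal L_X g\|_0/t$, assembling to the bound $C_r(\alpha)\|u\|_{r+1}\|g\|_X(1/t)$. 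To pass to the inner product weighted by $\text{vol}_\alpha$, as in the proof of Theorem \ref{thm:mixingbound2}, I replace $g$ by $\alpha g$ and note $\|\alpha g\|_X\leq 2\|\alpha\|_r\|g\|_X$ by Sobolev.

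I do not expect significant obstacles here, as the argument is a variant of the proof of Theorem \ref{thm:mixingbound2}; the only subtlety is that the coboundary identity \eqref{eq:coboundary_formula} must be applied \emph{before} the division by $t$ in Lemma \ref{lemma:mixingbound}, since that is where the saved factor of $t^{(1+\nu_0)/2}(\log t)^{\epsilon_0}$ actually appears. One minor point worth verifying is that $f=U_\alpha u$ with $u\in W^{r+1}(M)$ is automatically continuous and lies in $W^r(M)$ (so the estimates of the previous section apply), which is clear from the Sobolev algebra property for $r>3/2$.
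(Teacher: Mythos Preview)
Your proposal is correct and follows essentially the same route as the paper's own proof, which simply cites Lemmas~\ref{lemma:mixingbound}, \ref{lemma:int_parts}, and \ref{lemma:coboundaries1}; you have unpacked that one-line argument accurately. The only minor remark is that the lemma as stated concerns the standard inner product $\langle\cdot,\cdot\rangle$, so your final paragraph about passing to the $\text{vol}_\alpha$-weighted inner product via $g\mapsto\alpha g$ is superfluous here (though harmless).
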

\begin{proof} The statement follows readily from Lemma~\ref{lemma:mixingbound}, Lemma~\ref{lemma:int_parts}  and Lemma~\ref{lemma:coboundaries1}.
\end{proof}

\begin{proofof}{Theorem}{thm:abs_cont_spectrum} Since coboundaries with smooth transfer functions are dense in 
$L^2(M,\text{\text vol}_\alpha)$, it is enough to prove that their spectral measures are
absolutely continuous. In fact, we will prove that for any coboundary $f=U_\alpha u$
with transfer function $u\in W^{r+1}(M)$ its spectral measure is absolutely continuous
with square-integrable density.  Let $\hat \mu_f $ be the Fourier transform of the spectral
measure $\mu_f$, which is the bounded function defined as follows:
$$
\hat \mu_f(t) :=  \int_\R   e^{i\xi t}   d\mu_f(\xi) \,, \quad \text{ for all } \, t\in \R\,.
$$
By definition of the spectral measures, for all $t\in \R$ we have the following identity
$$
\hat \mu_f(t)=  < f \circ h^\alpha_t, f >_{L^2(M, \text{\rm vol}_\alpha)} = < f \circ h^\alpha_t, \alpha f > \,.
$$
By Lemma~\ref{lemma:coboundaries2} we therefore have the following estimate: for any 
$t >  t_r(\alpha)$,
\begin{equation}
\label{eq:FT_bound}
\vert \hat \mu_f(t) \vert = \vert <  f \circ h^\alpha_t , \alpha f >\vert
\leq  C_r(\alpha) \Vert u \Vert_{r+1} \Vert \alpha f \Vert_X (1/t)\,.
\end{equation}
Since  $\hat \mu_f$ is a bounded function and it is symmetric, that is,  for all $t\in \R$, 
$$
\hat \mu_f(t) =  < f \circ h^\alpha_t, f >_{L^2(M, \text{\rm vol}_\alpha)} =  
< f , f \circ h^\alpha_{-t}>_{L^2(M, \text{\rm vol}_\alpha)} = \overline{\hat \mu_f(-t)}\,, 
$$
and since the function $1/t \in L^2\left((t_r(\alpha), +\infty), dt\right)$,
we have proved that the Fourier transform $\hat \mu_f\in L^2(\R, dt)$, which readily implies 
that the spectral measure $\mu_f$ is absolutely continuous with square-integrable Radon-Nikodym derivative.  In fact, there exists a constant  $C''_r(\alpha)>0$ such that the following estimate holds:
$$
\Vert \frac{D\mu_f}{D\xi} \Vert_{L^2(\R, d\xi)} =  \Vert \hat \mu_f \Vert_{L^2(\R, dt)}  \leq
C''_r(\alpha) \Vert u \Vert_{r+1} \Vert f \Vert_X \,.
$$
The proof of the theorem is complete. 
\end{proofof}

\subsection{Maximal spectral type} \label{subsec:spectral_type} Let us now  prove that the {\it maximal spectral type} of any 
smooth time-change of the classical horocycle flow is equivalent to Lebesgue.

\begin{lemma} 
\label{lemma:coboundaries3}
For any $\sigma \in (0, \sigma_r(\alpha))$ there exists a constant 
$C'_{r,\sigma}(\alpha)>0$ such that,  for all $x\in M$, for all $t>t_r(\alpha)$ and
for all functions $u\in C^1(M)$, we have 
$$
\vert \int_0^\sigma  U_\alpha u \circ h^\alpha_t \circ \phi^X_s(x) ds \vert  \leq C'_{r,\sigma}(\alpha) 
\max\{ \Vert u\Vert_{L^\infty(M)}, \Vert Xu\Vert_{L^\infty(M)} \} (1/t )  \,.
$$
\end{lemma}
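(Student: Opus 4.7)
The plan is to combine the bootstrap inequality of Lemma~\ref{lemma:mixingbound} with the coboundary estimate of Lemma~\ref{lemma:coboundaries1}. Set $f := U_\alpha u$. Since $u \in C^1(M)$ and $\alpha \in W^r(M)$ is smooth and strictly positive (in particular $C^1$ by Sobolev embedding, as $r>11/2$), the function $f = Uu/\alpha$ is continuous on $M$; by compactness of $M$ we also have $u, Xu \in L^\infty(M)$. Thus both Lemma~\ref{lemma:mixingbound} and Lemma~\ref{lemma:coboundaries1} apply to $f$.

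For $\sigma \in (0, \sigma_r(\alpha))$ and $t$ larger than the threshold $t_{r,\sigma}(\alpha)$ of Lemma~\ref{lemma:mixingbound} (which we may absorb into $t_r(\alpha)$), that lemma gives
$$
\left\vert \int_0^\sigma U_\alpha u \circ h^\alpha_t \circ \phi^X_s(x)\,ds \right\vert \leq \sup_{S\in[0,\sigma]} \left\vert \int_0^S f \circ h^\alpha_t \circ \phi^X_s(x)\,ds \right\vert \leq \frac{C_{r,\sigma}(\alpha)}{t}\,\sup_{S\in [0,\sigma]} \left\vert \int_{\gamma^S_{x,t}} f\,\hat U_\alpha \right\vert .
$$
By Lemma~\ref{lemma:coboundaries1}, the supremum on the right-hand side is controlled by
$$
C\,\max\{1,\sigma\}\,\max\{\Vert u \Vert_{L^\infty(M)}, \Vert Xu \Vert_{L^\infty(M)}\},
$$
a quantity uniform in $x$ and $t$. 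Collecting the $\sigma$-dependent constants into a single $C'_{r,\sigma}(\alpha)$ yields the advertised $1/t$ bound.

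No serious obstacle is anticipated, since the argument is a direct composition of two previously established results. The conceptual reason the decay is sharper than the generic rate $t^{-(1-\nu_0)/2}$ of Theorem~\ref{thm:mixingbound2} is that, for a coboundary, the line integral of $f\,\hat U_\alpha$ along $\gamma^S_{x,t}$ telescopes via the identity $f\,\hat U_\alpha = du - Xu\,\hat X$ exploited inside Lemma~\ref{lemma:coboundaries1}; this replaces the polynomially growing equidistribution bound with a uniform-in-$t$ estimate, after which the $1/t$ gain from the bootstrap of Lemma~\ref{lemma:mixingbound} becomes the final decay rate. The only detail to verify is that $t_r(\alpha)$ in the statement can be chosen so as to dominate $t_{r,\sigma}(\alpha)$ for the relevant range of $\sigma$, which is immediate since $\sigma$ is fixed.
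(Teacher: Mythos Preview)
Your proposal is correct and follows exactly the approach of the paper, which simply states that the lemma follows from Lemma~\ref{lemma:mixingbound} and Lemma~\ref{lemma:coboundaries1}. Your write-up in fact supplies the details (continuity of $f=U_\alpha u$, applicability of both lemmas, and absorption of $t_{r,\sigma}(\alpha)$ into $t_r(\alpha)$) that the paper leaves implicit.
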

\begin{proof} The statement follows from Lemma~\ref{lemma:mixingbound} and 
Lemma~\ref{lemma:coboundaries1}.  
\end{proof}

\begin{lemma}  
\label{lemma:orth_cond}
Let $r>11/2$ and let $\alpha \in W^r(M)$. Assume that the maximal spectral 
type of the time-change $\{h^\alpha_t\}$ is not Lebesgue. There exists a smooth non-zero function 
$w\in L^2(\R,dt)$ such that for all $x\in M$, for all $\sigma \in (0, \sigma_r(\alpha))$ and   
for all functions $u\in C^1(M)$ the following holds:
$$
\int_{\R} w(t) \int_0^\sigma  U_\alpha u \circ h^\alpha_t \circ \phi^X_s(x) ds \,dt =0
$$
\end{lemma}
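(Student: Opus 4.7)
The plan is to use Theorem~\ref{thm:abs_cont_spectrum} to realize $w$ as the inverse Fourier transform of the indicator of a set on which the maximal spectral density vanishes. By that theorem write $d\sigma_{\max}=\rho_{\max}\,d\xi$; if $\sigma_{\max}$ is not equivalent to Lebesgue then $B:=\{\xi:\rho_{\max}(\xi)=0\}$ has positive Lebesgue measure. Since $\sigma_{\max}$ may be taken symmetric under $\xi\mapsto-\xi$ (the unitary group acts on real $L^2$), $B$ may be chosen symmetric, and inner regularity furnishes a compact symmetric $A\subset B$ with $|A|>0$. Set
\begin{equation*}
w(t):=\int_A e^{it\xi}\,d\xi.
\end{equation*}
Paley--Wiener extends $w$ to an entire function of exponential type, so $w\in C^\infty(\R)$; Plancherel gives $w\in L^2(\R)$ with $\|w\|_2=\sqrt{|A|}$; and $w(0)=|A|>0$, so $w\neq 0$. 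The key property of $w$ is that its Fourier transform is (up to normalization) $\chi_A$, which vanishes $\sigma_{\max}$-almost everywhere.

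To obtain the pointwise identity, fix $u\in C^1(M)$ and $\sigma\in(0,\sigma_r(\alpha))$, put $f:=U_\alpha u$, and define
\begin{equation*}
G_x(t):=\int_0^\sigma f\circ h^\alpha_t\circ\phi^X_s(x)\,ds.
\end{equation*}
Lemma~\ref{lemma:coboundaries3}, together with its $t<0$ analogue obtained by time reversal, gives the uniform bound $|G_x(t)|\le C/|t|$ for $|t|>t_r(\alpha)$, and trivially $|G_x(t)|\le\sigma\|f\|_\infty$; hence $G_x\in L^2(\R)\cap L^\infty(\R)$ uniformly in $x$. Accordingly
\begin{equation*}
F(x):=\int_\R w(t)G_x(t)\,dt
\end{equation*}
is absolutely convergent by Cauchy--Schwarz, and a dominated convergence argument (with dominating function $|w(t)|\cdot C\min(\sigma\|f\|_\infty,C'/|t|)\in L^1(\R)$) shows $F$ is continuous on $M$.

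To see that $F\equiv 0$, pair $F$ with an arbitrary bounded continuous $g:M\to\C$. Rewriting $\int_M F\bar g\,d\mathrm{vol}_\alpha=\int_M F\,\overline{g\alpha}\,d\mathrm{vol}$, two applications of Fubini (justified by the $L^2$-decay of $G_x$) together with the $\phi^X$-invariance of the standard volume $\mathrm{vol}$ yield
\begin{equation*}
\int_M F\,\bar g\,d\mathrm{vol}_\alpha=\int_\R w(t)\,\langle f\circ h^\alpha_t,\,K_g\rangle_{L^2(\mathrm{vol}_\alpha)}\,dt,
\end{equation*}
where $K_g\in L^2(M,\mathrm{vol}_\alpha)$ is an explicit function built from $g$, $\alpha$, and the $\phi^X_{-s}$-average over $[0,\sigma]$. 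The spectral theorem applied to $\{h^\alpha_t\}$ on $L^2(M,\mathrm{vol}_\alpha)$ identifies the right-hand side, up to a normalization constant, with the value $\mu_{f,K_g}(A)$ of the complex spectral measure $\mu_{f,K_g}$ on $A$. By the standard spectral Cauchy--Schwarz $|\mu_{f,K_g}(A)|\le\mu_f(A)^{1/2}\mu_{K_g}(A)^{1/2}$, together with $\mu_f\ll\sigma_{\max}$ (by definition of maximal spectral type) and $\sigma_{\max}(A)=0$, this vanishes. Hence $F=0$ $\mathrm{vol}_\alpha$-a.e., and continuity promotes this to $F\equiv 0$.

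The main obstacle I anticipate is the last, $L^2$-to-pointwise, step. Since $w\in L^2(\R)\setminus L^1(\R)$, there is no Bochner integral $\int_\R w(t)\,f\circ h^\alpha_t\,dt$ in $L^2(M,\mathrm{vol}_\alpha)$ with absolute convergence, and one has to argue through truncations $w_T:=w\chi_{[-T,T]}$ and rely on the absolute continuity of $\mu_{f,K_g}$ supplied by Theorem~\ref{thm:abs_cont_spectrum}. The coboundary structure $f=U_\alpha u$ is essential: it is precisely Lemma~\ref{lemma:coboundaries3} (and the coboundary identity~\eqref{eq:coboundary_formula}) that produces the $O(1/|t|)$ decay of $G_x$, without which the defining integral for $F(x)$ would fail to converge absolutely and the Fubini interchanges would be illegitimate.
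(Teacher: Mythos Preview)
Your proof is correct and follows essentially the same route as the paper: construct $w$ as the Fourier transform of $\chi_A$ for a compact set $A$ of positive Lebesgue measure but zero maximal spectral measure, use the $O(1/|t|)$ coboundary decay of Lemma~\ref{lemma:coboundaries3} to make all the relevant integrals absolutely convergent, obtain the weak vanishing $\langle F,g\rangle=0$ from the spectral theorem, and upgrade to pointwise vanishing by continuity and density. The only organizational difference is that you integrate in $s$ first and then pair with a test function (packaging the result as a single $K_g$), whereas the paper first establishes the orthogonality $\int_\R w(t)\langle f\circ h^\alpha_t,g\rangle\,dt=0$ via Lemma~\ref{lemma:coboundaries2} and then translates by $\phi^X_s$ and integrates; and you invoke spectral Cauchy--Schwarz where the paper uses (implicitly) polarization to conclude $\mu_{f,K_g}(A)=0$.

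One minor remark: your claim that $\sigma_{\max}$ may be taken symmetric under $\xi\mapsto-\xi$ ``since the unitary group acts on real $L^2$'' is not correctly justified (the ambient Hilbert space $L^2(M,\mathrm{vol}_\alpha)$ is complex), but this is harmless because the symmetry of $A$ is never actually used in your argument.
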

\begin{proof} If the maximal spectral type is not Lebesgue, then the Lebesgue measure is not 
absolutely continuous with respect to the maximal spectral measure. Thus, there exists a compact 
set $A\subset \R$ such that $A$ has measure zero with respect to the maximal spectral measure 
of the flow $\{h^\alpha_t\}$, hence with respect to all its spectral measures, but $A$ has strictly positive Lebesgue measure.

Let $w\in L^2(\R)$ be the complex conjugate of the Fourier transform of the characteristic function 
$\chi_A$ of the set $A\subset \R$. For any pair of functions $f$, $g\in L^2(M)$ let $\mu_{f,g}$ denote 
the joint spectral measure.  By Theorem~\ref{thm:abs_cont_spectrum} the measure $\mu_{f,g}$ is
absolutely continuous with respect to Lebesgue. Whenever  $\mu_{f,g}$ has square-integrable
density we have 
\begin{equation}
\label{eq:orthogonality}
\int_{\R}  w(t) <f \circ h^\alpha_t, g >_{L^2(M, \text{\rm vol}_\alpha)}  dt = \int_{\R}  \chi_A(\xi) d \mu_{f,g} (\xi) =0\,.
\end{equation}
It follows from Lemma~\ref{lemma:coboundaries2} that, whenever $f=U_\alpha u$ is a coboundary 
with transfer function $u\in C^1(M)$, then  the Fourier transform of the spectral measure
 $\mu_{f,g}$, hence its density, is square-integrable, so that the identity in formula~(\ref{eq:orthogonality}) holds. 

From formula~(\ref{eq:orthogonality}) by translation under the geodesic flow and by 
integration we derive that, for any $\sigma >0$ and for any function $g\in W^r(M) \subset 
\text{\rm dom}(X)$, 
\begin{equation}
\label{eq:zeroint}
\int_0^\sigma \int_{\R}  w(t) <U_\alpha u \circ h^\alpha_t\circ \phi^X_s, g >_{L^2(M, \text{\rm vol}_\alpha)}  
dt ds= 0\,.
\end{equation}
By Lemma~\ref{lemma:coboundaries2} the double integral in formula~(\ref{eq:zeroint})
is absolutely convergent, hence 
\begin{equation}
\label{eq:zerointbis}
 < \int_{\R}  w(t)   \int_0^\sigma U_\alpha u \circ h^\alpha_t\circ \phi^X_s(\cdot)ds dt , g >_{L^2(M, \text{\rm vol}_\alpha)} = 0\,.
\end{equation}
It follows from Lemma~\ref{lemma:coboundaries3} that the function
 $$
 \int_{\R}  w(t)  \int_0^\sigma    U_\alpha u \circ h^\alpha_t\circ \phi^X_s(\cdot) ds dt
  $$
 is bounded on $M$, hence it vanishes identically by formula~(\ref{eq:zerointbis}) and 
 by density of the subspace $W^r(M) \subset L^2(M)$.

\end{proof}

\begin{lemma} 
\label{lemma:w_zero}
Let $w\in L^2(\R,dt)$  be a smooth function. Assume that for some $x\in M$, for some 
$\sigma\in (0, \sigma_r(\alpha))$ and  for all functions $u\in C^1(M)$, 
$$
\int_{\R} w(t) \int_0^\sigma  U_\alpha u \circ h^\alpha_t \circ \phi^X_s(x) ds \,dt =0\,.
$$
It follows that $w$ vanishes  identically. 
\end{lemma}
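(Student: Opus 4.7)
My plan is to use flow-translation invariance to turn the hypothesis into a convolution identity, then integrate by parts to reinterpret it as the vanishing of a one-parameter family of distributions on $M$, and finally conclude via a geometric and Fourier argument on the parametrizing map $(t,s) \mapsto h^\alpha_t\phi^X_s(x)$.

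Since $u \circ h^\alpha_{-T} \in C^1(M)$ for every $T \in \R$ and $U_\alpha(u\circ h^\alpha_{-T}) = (U_\alpha u)\circ h^\alpha_{-T}$, I would first apply the hypothesis to $u \circ h^\alpha_{-T}$ and substitute $t \mapsto t-T$ to obtain, for every $T \in \R$ and every $u \in C^1(M)$, $\int_\R w(t+T)\,\Psi_u(t)\,dt = 0$, where $\Psi_u(t) := \int_0^\sigma (U_\alpha u)(h^\alpha_t\phi^X_s(x))\,ds$. By Lemma~\ref{lemma:coboundaries3}, $|\Psi_u(t)| = O(1/|t|)$, so $\Psi_u \in L^2(\R,dt)$ and the identity is an $L^2$-orthogonality of $w$ against every translate of $\Psi_u$. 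Writing $\Psi_u = \Phi_u'$ with $\Phi_u(t) := \int_0^\sigma u(h^\alpha_t\phi^X_s(x))\,ds$ and integrating by parts in $t$ (justified by first averaging against a mollifier $\rho \in C_c^\infty(\R)$ in the $T$-variable to produce the smooth $L^2$ function $w*\check\rho$, and then truncating along a sequence $R_n \to \infty$ with $|(w*\check\rho)(\pm R_n)|\to 0$, which exists since $w*\check\rho \in L^2$), the identity becomes $\int_\R\int_0^\sigma w'(t)\,u(h^\alpha_{t-T}\phi^X_s(x))\,ds\,dt = 0$ for all $T \in \R$ and $u \in C^1(M)$; equivalently, the complex distribution $\nu^T$ on $M$ obtained by pushing forward $w'(t)\,dt\otimes ds$ under the map $\Psi_{x,T}(t,s) := h^\alpha_{t-T}\phi^X_s(x)$ vanishes for every $T$.

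By Lemma~\ref{lemma:tangent_flow} the differential $D\Psi_{x,T}$ at $(t,s)$ is spanned by $U_\alpha$ and $X + v_{t-T}(x,s)\,U_\alpha$, which are everywhere linearly independent since $U_\alpha$ and $X$ are; thus $\Psi_{x,T}$ is a local diffeomorphism. For a generic $y_0$ in the image and a small neighborhood $N$ of $y_0$, the preimage $\Psi_{x,0}^{-1}(N)$ decomposes into a discrete family of sheets with $t$-coordinates $\tau_n(y)$ and Jacobians $J_n(y)$. Changing variables in $\nu^T(u) = 0$ for $u \in C_c^1(N)$ and using that $\Psi_{x,T}$-preimages shift by $T$ in the $t$-direction yields $\sum_n J_n^{-1}(y)\,w'(\tau_n(y)+T) = 0$ for a.e.\ $y \in N$ and every $T \in \R$. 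Taking Fourier transform in $T$ at a fixed generic $y$ converts this into $\widehat{w'}(\xi)\,F_y(\xi) = 0$ for all $\xi$, with $F_y(\xi) := \sum_n J_n^{-1}(y)\,e^{-i\xi\tau_n(y)}$ encoding the return-time structure of the uniquely ergodic flow. For a generic $y$, $F_y$ is a non-trivial tempered distribution whose zero set has Lebesgue measure zero, forcing $\widehat{w'} \equiv 0$; hence $w'$ is constant, and $w \in L^2(\R)$ then forces $w \equiv 0$.

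The main obstacle is the last step: one must verify nondegeneracy of the trigonometric sum $F_y$ at a generic base point $y$. This is where both the freedom to vary $T$ and the genericity of the return times $\tau_n(y)$ are essential; the latter is ensured by unique ergodicity of $\{h^\alpha_t\}$ combined with the non-integrability of the two-plane spanned by $U_\alpha$ and $X$ (equivalently, the hyperbolicity of the underlying geodesic flow). A secondary technical point is the justification of the integration by parts in the absence of pointwise decay of $w$, which is handled by the mollification-plus-truncation device described above.
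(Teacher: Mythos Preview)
Your argument has a genuine gap at the final step. The map $\Psi_{x,0}:(t,s)\mapsto h^\alpha_t\phi^X_s(x)$ is an injective immersion into a single weak-stable leaf (a $2$-dimensional submanifold of the $3$-manifold $M$), so the pushforward $\nu^T$ is a singular distribution supported on that leaf. Your ``change of variables'' to a pointwise identity $\sum_n J_n^{-1}(y)\,w'(\tau_n(y)+T)=0$ for a.e.\ $y\in N$ is not valid: for a fixed $y$ in the image there is exactly one preimage, not a sum, while the different ``sheets'' land on \emph{different} plaques $P_n\subset N$. What $\nu^T(u)=0$ actually gives is $\sum_n\int_{P_n}J_n^{-1}(y)\,w'(\tau_n(y)+T)\,u(y)\,d^2y=0$, and since the leaf is dense the plaques accumulate and cannot be separated by a single continuous test function $u$. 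So no trigonometric sum $F_y$ arises, and your proposed Fourier argument has nothing to act on. Your stated justification is also incorrect: the two-plane spanned by $U_\alpha$ and $X$ \emph{is} integrable, since $[U_\alpha,X]=(\tfrac{X\alpha}{\alpha}-1)U_\alpha$; it is precisely the tangent plane to the weak-stable foliation. (There is also a secondary issue: after your integration by parts the integrand $w'(t+T)\Phi_u(t)$ need not be integrable, since $\Phi_u$ is merely bounded and $w'$ need not lie in $L^1$.)

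The paper's proof avoids all of this by an explicit localization. One builds a thin flow-box $E^T_{\rho,\sigma}$ around a finite piece of the strip and takes $u$ of product form $\chi(r/\rho)\chi(s/\sigma)\psi(t)$ in flow-box coordinates, so that on the interval $\vert t\vert<T_{\rho,\sigma}$ the path $t\mapsto h^\alpha_t\phi^X_s(x)$ meets the support of $u$ only in the central plaque and the integral reduces exactly to a constant times $\int_{-T}^{T}w(t)\psi'(t)\,dt$. The crucial geometric fact is not any non-integrability but that the horocycle never returns to a fixed geodesic arc, so $T_{\rho,\sigma}\to\infty$ as $\rho\to 0^+$; the tail $\vert t\vert>T_{\rho,\sigma}$ is then killed by the $O(1/t)$ bound of Lemma~\ref{lemma:coboundaries3}. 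This yields $\int_\R w\psi'=0$ for all $\psi\in C_c^\infty(\R)$ directly, with no Fourier step and no need to control return times. Your translation trick $u\mapsto u\circ h^\alpha_{-T}$ is a nice observation but is not needed once one localizes in a flow-box.
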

\begin{proof} Let us fix $x\in M$ and $\sigma>0$. For any given $T>0$ and $\rho>0$,
let  $E^T_{\rho,\sigma}$ be the flow-box for the time-change $\{h^\alpha_t\}$ defined as follows:
\begin{equation}
\label{eq:flow_box}
E^T_{\rho,\sigma} (r, s, t) =        (h^\alpha_t \circ \phi^X_s \circ h^V_r)(x) \,, \,\, \text{ \rm for all } 
(r,s,t) \in (-\rho, \rho) \times (-\sigma, \sigma)\times (-T,T)\,.
\end{equation}
Since the horocycle flow has no periodic orbits, it never returns to any given geodesic segment, 
hence for any $\sigma>0$ and any $T>0$ there exists $\rho>0$ such that $E^T_{\rho,\sigma}$ 
is an  embedding.  For any $\chi\in C^\infty(-1,1)$ and any $\psi \in C^\infty_0( -T, T)$, let
$$
\tilde u (r,s,t) :=  \chi(\frac{r}{\rho}) \chi(\frac{s}{\sigma}) \psi (t) \,,
 \quad \text{ \rm for all }  (r,s,t) \in (-\rho, \rho) \times (-\sigma, \sigma)\times (-T,T)\,.
$$
Let $u \in C^\infty(M)$ be the function defined as $u=0$ on $M\setminus
\text{\rm Im}(E^T_{\rho,\sigma})$ and as
\begin{equation}
\label{eq:u_def}
(u\circ   E^T_{\rho,\sigma}) (r, s, t) :=   \tilde u (r,s,t) \,,  \,\, \text{ \rm for all } 
 (r,s,t) \in (-\rho, \rho) \times (-\sigma, \sigma)\times (-T,T)\,,
\end{equation}
on $\text{\rm Im}(E^T_{\rho,\sigma})$. We claim that the  following formulas hold:
\begin{equation}
\label{eq:derivatives}
\begin{aligned}
(U_\alpha u) \circ E^T_{\rho,\sigma} (r, s, t) &:=  \chi(\frac{r}{\rho}) \chi(\frac{s}{\sigma}) 
\frac{d\psi}{dt} (t)\,, \\  
(X u)\circ E^T_{\rho,\sigma} (r, s, t) &:= \sigma \chi(\frac{r}{\rho})  \frac{d\chi}{ds} (\frac{s}{\sigma}) 
\psi (t)  \\ &-  v_t(h^V_r(x),s)    \chi(\frac{r}{\rho}) \chi(\frac{s}{\sigma}) 
\frac{d\psi}{dt} (t)\,.
\end{aligned}
\end{equation}
In fact, the above formulas are immediate consequence of the identities below.
Let $\mathcal T_{r,s} :\R^2 \to \R$ be the unique solution of the parametric Cauchy problem
 \begin{equation}
 \label{eq:Cauchy_problem}
\begin{cases}
\frac{\partial \mathcal T_{r,s}}{\partial S} (t,S) & = - v_{\mathcal T_{r,s}(t,S)}\left(  h^V_r(x), s+S\right)
 \,, \\
\mathcal T_{r,s}(t,0) &= t \,.
\end{cases}
\end{equation}
For any $(r, s, t)\in  (-\rho, \rho) \times (-\sigma, \sigma)\times (-T,T)$ there exists $(\tau_0,S_0)
\in \R^+$ such that, for all $(\tau,S) \in (-\tau_0, \tau_0)\times (-S_0, S_0) $, the following holds:
\begin{equation}
\label{eq:flow_box_id}
\begin{aligned}
h^\alpha_\tau \circ E^T_{\rho,\sigma} (r, s, t) &= E^T_{\rho,\sigma} (r, s, t+\tau) \,, \\
\phi^X_S \circ E^T_{\rho,\sigma} (r, s, t) &= E^T_{\rho,\sigma} (r, s+S, \mathcal T_{r,s}(t,S) )\,.
\end{aligned}
\end{equation}
The first of the above formulas is an immediate consequence of the definition~(\ref{eq:flow_box})
of the flow-box map. The second formula follows from the  commutation relation:
\begin{equation}
\label{eq:comm_rel}
\phi^X_S \circ h^\alpha_t \circ \phi^X_s \circ h^V_r(x) =   
h_{\mathcal T_{r,s}(t,S)} \circ \phi^X_{s+S}  \circ h^V_r(x) \,, \quad \text{ for all } (t,s,S) \in \R^3\,.
\end{equation}
Let us prove the above commutation identity. For $S=0$ it holds for all $(r,s,t)\in \R^3$.
For all $(r,s)\in \R^2$ let $x_{r,s}:=( \phi^X_s \circ h^V_r)(x)$. 
By Lemma~\ref{lemma:tangent_flow}, by differentiation of equation~(\ref{eq:comm_rel}) 
with respect to $S\in \R$, we find
$$
\begin{aligned}
(X \circ \phi^X_S \circ h^\alpha_t) (x_{r,s})&= 
 \frac{ \partial \mathcal T_{r,s}}{\partial S} (t,S) \,(U_\alpha 
\circ h^\alpha_{\mathcal T_{r,s}(t,S)} \circ \phi^X_{S}) (x_{r,s}) \\
&+   v_{\mathcal T_{r,s}(t,S)}(x_{r,s}, S)  \,(U_\alpha \circ  h^\alpha_{\mathcal T_{r,s}(t,S)} 
\circ \phi^X_{S})(x_{r,s})  \\
&+  (X  \circ  h^\alpha_{\mathcal T_{r,s}(t,S)} \circ \phi^X_{S}) (x_{r,s}) \,.
\end{aligned}
$$
Since the above equation holds by the definition of the function $\mathcal T_{r,s}$ in 
formula~(\ref{eq:Cauchy_problem}),  the commutation relation~(\ref{eq:comm_rel}) 
holds as well. We have thus proved the flow-box identities~(\ref{eq:flow_box_id}) from which the
differentiation formulas~(\ref{eq:derivatives})  follow immediately.

\smallskip
For any $\rho$, $\sigma>0$, let $T_{\rho,\sigma} >0$ be defined as follows:
\begin{equation}
T_{\rho,\sigma} := \min \{ \vert t \vert  > T  \,\vert\,  
 \cup_{s\in [-\sigma, \sigma]} (h^\alpha_t \circ \phi^X_s)(x) \cap
 \text{ \rm Im} (E^T_{\rho,\sigma}) \not= \emptyset \}\,.
\end{equation}
Since the horocycle flow  never returns to any given geodesic segment, for
every fixed $\sigma>0$, the following holds:
\begin{equation}
\label{eq:T_rho_div}
\lim_{\rho\to 0^+} T_{\rho,\sigma}  = + \infty\,.
\end{equation}
By assumption and by formula~(\ref{eq:derivatives}) we have 
\begin{equation}
\label{eq:orth_cond}
\begin{aligned}
 \chi(0) &\left(\int_0^\sigma \chi(\frac{s}{\sigma}) ds \right) \left( \int_{-T}^T  w(t) 
\frac{d\psi}{dt} (t) \,dt  \right) \\ &+ \int_{\R\setminus [-T_{\rho,\sigma}, T_{\rho,\sigma}]}  w(t)  
\int_0^\sigma U_\alpha u \circ h^\alpha_t\circ \phi^X_s(x) ds dt  \,=\,0\,.
\end{aligned}
\end{equation}
We claim that the following holds: for any fixed $\sigma\in (0, \sigma_r(\alpha))$ and $T>0$, 
\begin{equation}
\label{eq:lim_zero}
\lim_{\rho\to 0^+}  \int_{\R\setminus [-T_{\rho,\sigma}, T_{\rho,\sigma}]}  w(t)  
\int_0^\sigma U_\alpha u \circ h^\alpha_t\circ \phi^X_s(x) ds dt  \,=\,0\,.
\end{equation}
Since the function $u\in C^\infty(M)$, by Lemma~\ref{lemma:coboundaries3}, combined
with a trivial estimate for $0\leq t \leq t_r(\alpha)$,
there exists  a constant $C'_{r,\sigma}(\alpha)>0$ such that, for all $x\in M$ and for 
all $\sigma\in (0, \sigma_r(\alpha))$, we have
\begin{equation}
\label{eq:L2bound1}
\begin{aligned}
\Vert \int_0^\sigma U_\alpha u \circ h^\alpha_t\circ \phi^X_s(x) ds &\Vert_{ L^2(\R, dt)} \\
\leq  C'_{r,\sigma}(\alpha) &\max\{ \Vert u\Vert_{L^\infty(M)},  \Vert Xu\Vert_{L^\infty(M)}, 
\Vert U_\alpha u\Vert_{L^\infty(M)}\}\,.
\end{aligned}
\end{equation}
By the definition of the function $u\in C^\infty(M)$ (see formula~(\ref{eq:u_def})), by the
trivial estimate $\vert v_t\vert \leq C_\alpha t \leq C_\alpha T$ and by the
estimates in formula~(\ref{eq:derivatives}) we also have
\begin{equation}
\label{eq:L2bound2}
\begin{aligned}
 \max\{ \Vert u\Vert_{L^\infty(M)},  &\Vert Xu\Vert_{L^\infty(M)}, \Vert U_\alpha u\Vert_{L^\infty(M)}  \} \leq 
 C'_{r,\sigma}(\alpha)\max\{1, T\}  \\ &\times {\max}^2\{\Vert \chi \Vert_{L^\infty(\R)}, \Vert \chi' \Vert_{L^\infty(\R)} \} \max\{\Vert \psi \Vert_{L^\infty(\R)}, \Vert \psi' \Vert_{L^\infty(\R)}\}\,.
 \end{aligned}
\end{equation}
In particular the above bound is uniform with respect to $\rho>0$. Thus the limit
in formula~(\ref{eq:lim_zero}) follows from the uniform $L^2$ bound given by
 formulas~(\ref{eq:L2bound1}) and~(\ref{eq:L2bound2}). 

\smallskip
Since  formulas~(\ref{eq:orth_cond}) and~(\ref{eq:lim_zero}) hold for
 all functions $\chi\in C^\infty_0(-1,1)$, for all $T>0$ and for all
 functions  $\psi \in C^\infty_0(-T,T)$, it follows that 
$$
 \int_\R  w(t)  \frac{d\psi}{dt} (t) \,dt  =0\,,  \quad \text{ \rm for all } \psi \in C^\infty_0(\R)\,,
$$
hence the function $w\in L^2(\R,dt)$ is a constant (necessarily equal to zero).
\end{proof}

By Lemma~\ref{lemma:orth_cond}  and Lemma~\ref{lemma:w_zero} we derive our
conclusive spectral result.  

\begin{thm} 
\label{thm:Max_spec_type}
Let $r>11/2$ and let $\alpha \in W^r(M)$.  The maximal spectral type of the
time-change $\{h^\alpha_t\}$ of the (stable) horocycle flow $\{h^U_t\}$ with infinitesimal generator
$U_\alpha:= U/\alpha$ is equivalent to Lebesgue.
\end{thm}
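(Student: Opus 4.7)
The plan is to argue by contradiction, combining the two preceding lemmas. Suppose that the maximal spectral type of the time-change $\{h^\alpha_t\}$ is \emph{not} equivalent to Lebesgue. Since by Theorem~\ref{thm:abs_cont_spectrum} the spectrum is purely absolutely continuous, the maximal spectral measure is absolutely continuous with respect to Lebesgue; so non-equivalence means that Lebesgue is not absolutely continuous with respect to the maximal spectral measure. This is exactly the hypothesis used in Lemma~\ref{lemma:orth_cond}.

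First I would invoke Lemma~\ref{lemma:orth_cond} to produce a non-zero smooth function $w \in L^2(\R, dt)$ with the orthogonality property: for every $x \in M$, every $\sigma \in (0,\sigma_r(\alpha))$ and every $u \in C^1(M)$,
$$
\int_\R w(t) \int_0^\sigma U_\alpha u \circ h^\alpha_t \circ \phi^X_s(x)\, ds\, dt = 0\,.
$$
Then I would fix any $x \in M$ and any $\sigma \in (0,\sigma_r(\alpha))$, and feed this identity into Lemma~\ref{lemma:w_zero}, whose hypothesis is precisely the vanishing of the displayed integral for all $u \in C^1(M)$. Lemma~\ref{lemma:w_zero} then forces $w \equiv 0$ in $L^2(\R, dt)$, contradicting the fact that $w$ was chosen non-zero.

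This contradiction shows that the assumption fails: the Lebesgue measure must be absolutely continuous with respect to the maximal spectral measure. Combined with Theorem~\ref{thm:abs_cont_spectrum}, which gives the reverse absolute continuity, this yields the claimed equivalence of the maximal spectral type with Lebesgue.

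The proof is essentially a one-line assembly of the two preceding lemmas; the genuine work has already been done in Lemmas~\ref{lemma:orth_cond} and~\ref{lemma:w_zero}, where the square-integrable decay of correlations for smooth coboundaries (Lemma~\ref{lemma:coboundaries2}) and the flow-box construction exploiting the absence of periodic horocycle orbits were the main technical ingredients. The only delicate point to verify here is that the hypothesis of Lemma~\ref{lemma:orth_cond} is genuinely equivalent to the negation of ``maximal spectral type equivalent to Lebesgue'', which uses the already-established absolute continuity of the spectrum.
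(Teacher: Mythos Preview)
Your proposal is correct and matches the paper's approach exactly: the paper simply states that Theorem~\ref{thm:Max_spec_type} follows from Lemma~\ref{lemma:orth_cond} and Lemma~\ref{lemma:w_zero}, and you have spelled out the (straightforward) contradiction argument that combines them. Your remark that the hypothesis ``maximal spectral type not Lebesgue'' in Lemma~\ref{lemma:orth_cond} amounts, given Theorem~\ref{thm:abs_cont_spectrum}, to Lebesgue not being absolutely continuous with respect to the maximal spectral measure is also exactly how the paper's proof of that lemma reads the assumption.
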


\section*{Acknowledgements}
We are extremely grateful to A.~Katok who in several occasions raised  the question 
of whether smooth time-changes of horocycle flows have absolutely continuous spectrum,
thereby providing the main motivation for this work. After we answered this question in
the affirmative, he asked whether we could prove that the maximal spectral type is Lebesgue,
motivating a significant improvement of the paper. We are also very grateful to L.~Flaminio
who informed us of the crucial work of B.~Marcus \cite{Ma} and thus put us on the right track.

\bibliographystyle{plain}

\end{document}